\numberwithin{equation}{section}
\theoremstyle{definition}
\newtheorem{dfn}{Definition}[section]
\newtheorem{cnv}[dfn]{Convention}
\newtheorem{ntn}[dfn]{Notation}
\newtheorem{rec}[dfn]{Reminder}
\newtheorem{rec+dfn}[dfn]{Reminder \& Definition}
\newtheorem{dfn+rem}[dfn]{Definition \& Remark}
\newtheorem{rem+dfn}[dfn]{Remark \& Definition}
\newtheorem{rem}[dfn]{Remark}
\newtheorem{ntn+rem}[dfn]{Notation \& Remark}
\newtheorem{exa}[dfn]{Example}
\theoremstyle{plain}
\newtheorem{thm}[dfn]{Theorem}
\newtheorem{lem}[dfn]{Lemma}
\newtheorem{cor}[dfn]{Corollary}
\newtheorem{pro}[dfn]{Proposition}
\newcommand{\fraktur}[1]{\mathfrak{#1}}
\DeclareMathOperator{\LLOO}{\smash{\text{\it\textcursive{min}}}}
\DeclareMathOperator{\TTOO}{\smash{\text{\it\textcursive{lm}}}}
\DeclareMathOperator{\SO}{FO}
\DeclareMathOperator{\CO}{CO}
\DeclareMathOperator{\TO}{TO}
\DeclareMathOperator{\WO}{WO}
\DeclareMathOperator{\AO}{AO}
\DeclareMathOperator{\DO}{DO}
\DeclareMathOperator{\DCO}{DCO}
\DeclareMathOperator{\MIN}{min}
\DeclareMathOperator{\LC}{LC}
\DeclareMathOperator{\lc}{lc}
\DeclareMathOperator{\lt}{lm}
\DeclareMathOperator{\lm}{lm}
\DeclareMathOperator{\LT}{LM}
\DeclareMathOperator{\LM}{LM}
\DeclareMathOperator{\HF}{HF}
\DeclareMathOperator{\HP}{HP}
\DeclareMathOperator{\ind}{\varrho}
\DeclareMathOperator{\Img}{Im}
\DeclareMathOperator{\Supp}{Supp}
\DeclareMathOperator{\supp}{supp}
\DeclareMathOperator{\len}{len}
\renewcommand{\mod}{\mathop{/}}
\renewcommand{\epsilon}{\varepsilon}
\renewcommand{\varsigma}{\sigma}
\renewcommand{\imath}{\iota}
\renewcommand{\Im}{\Img}
\newcommand{\NN}{\mathbb{N}}
\newcommand{\RR}{\mathbb{R}}
\newcommand{\wo}{\smallsetminus}
\renewcommand{\emptyset}{\varnothing}
\newcommand{\notion}[1]{\textit{#1}}
\newcommand{\res}{ \text{$\upharpoonright$} }
\newcommand{\und}{\,\mathop{\wedge}\,}
\newcommand{\oder}{\,\mathop{\vee}\,}
\newcommand{\dann}{\Rightarrow}
\newcommand{\gdw}{\Leftrightarrow}
\newcommand{\slGamma}{\mathit{\Gamma}}
\newcommand{\slLambda}{\mathit{\Lambda}}
\newcommand{\Nu}{N}
\newcommand{\slPhi}{\mathit{\Phi}}
\newcommand{\slPsi}{\mathit{\Psi}}
\newcommand{\formatenum}{} %\itemsep1pt\parskip0pt\parsep0pt}
\begin{document}

\frenchspacing
%\doublespacing
\baselineskip = 1.375\baselineskip
\relpenalty = 9999 %10000
\binoppenalty = 10000
\hyphenpenalty = 0

\title{A topological approch to leading monomial ideals}
\author{Roberto Boldini} %\corref{cor1}}
\address{%
%Roberto Boldini, 
Universit\"at Z\"urich,
Institut f\"ur Mathematik,
Winterthurerstr.~190, 
CH-8057 Z\"urich} %, 
%\ead{rboldini@math.uzh.ch}
%\cortext[cor1]{Corresponding author}
% \address{Institute of Mathematics,
% University of Zurich,
% Winterthurerstrasse~190,
% CH{-}8057~Zurich,
% Switzerland,
% rboldini@math.uzh.ch}
\email{roberto.boldini@math.uzh.ch}
\thanks{I am very grateful to Prof.~Dr.~Markus Brodmann of the University of Zurich,
Switzerland,
for his precious comments and his gracious encouragement during the writing of this paper.
My acknowledgements also to Prof.~Dr.~Matthias Aschenbrenner of the University of California,
Los Angeles, for his kind communication of the proof of Theorem \ref{compact}.}
\subjclass[2010]{Primary 13C05 13C13 16D25}
\date{19th May 2010}

% \begin{abstract}
% We define a very natural topology on the set of all total orderings of monomials
% and investigate some topological properties
% of some notable subsets of such total orderings,
% particularly closedness, compactness, discreteness, and density.

% %\hspace{3mm}
% Consequently, using topological methods,
% we then prove some finiteness results about the number of leading monomial ideals
% in some classes of algebras over a field with respect to certain kinds of total orderings.

% %\hspace{3mm}
% Finally, applying some of the topological results achieved so far,
% we prove the existence of universal Gr\"obner bases of ideals
% in a class of algebras that includes algebras of solvable type
% with respect to degree orderings, admissible orderings,
% and hence degree-compatible orderings.
% \end{abstract}

% \frontmatter

% \begin{center}
% \begin{minipage}{0.667\linewidth}
% {\small

\begin{abstract}
We define a very natural topology on the set of total orderings of monomials of any algebra
having a countable basis over a field.
This topological space and some notable subspaces are compact.

This topological framework
allows us to deduce some finiteness results about leading monomial
ideals of any fixed ideal, namely:
(1)~the number of minimal leading monomial ideals with respect to total orderings is finite;
(2)~the number of leading monomial ideals with respect to degree orderings is finite;
(3)~the number of leading monomial ideals with respect to admissible orderings is finite
under some multiplicativity assumptions on the considered algebra.

Finally we  are able to infer the existence of universal Gr\"obner bases
from the topological properties of degree and admissible orderings
in a class of algebras that includes at least the algebras of solvable type.
These existence results turn out to be independent from the finiteness results mentioned above,
in contrast to the typical situation that occurs with ``classical'' more combinatorial proofs.
\end{abstract}
% }
% \end{minipage}
% \end{center}

% \begin{keyword}
% leading monomial ideal
% \sep topology of total orderings
% \sep degree ordering
% \sep admissible ordering
% \sep universal Gr\"obner basis
% \MSC{13C05 \sep 13C13 \sep 16D25}
% \end{keyword}

%%%%%%%%%%%%%%%%%%%%%%%%%%%%%%%%%%%%%%%%%%%%%%%%%%%%%%%%%%%%%%%%%%%%%%%%

%\renewcommand{\contentsname}{} 
%\tableofcontents

%%%%%%%%%%%%%%%%%%%%%%%%%%%%%%%%%%%%%%%%%%%%%%%%%%%%%%%%%%%%%%%%%%%%%%%%

% \section*{Abstract}

% We introduce a topology on the set of total orderings of monomials of any algebra
% having a countable basis over a field.
% This topological space and some notable subspaces are compact.

% We construct a topological framework
% that allows us to show some finiteness results about leading monomial
% ideals of any given ideal:
% the number of minimal leading monomial ideals with respect to total orderings is finite;
% the number of leading monomial ideals with respect to degree orderings is finite;
% the number of leading monomial ideals with respect to admissible orderings is finite
% under some multiplicativity assumptions on the considered algebra.

% We are then able to infer the existence of universal Gr\"obner bases
% from the topological properties of degree and admissible orderings.
% These existence results turn out to be independent from the finiteness results mentioned above,
% in contrast to the typical situation occurring with ``classical'' more combinatorial proofs.

\maketitle

%%%%%%%%%%%%%%%%%%%%%%%%%%%%%%%%%%%%%%%%%%%%%%%%%%%%%%%%%%%%%%%%%%%%%%%%

%\mainmatter

\section*{Introduction}

\noindent
In this paper we deal with leading monomial ideals of ideals in
some classes of algebras over a field
with respect to several sorts of total orderings on their bases, whose elements we call
monomials.

We introduce a topology on the set of all total orderings of monomials.
It turns out that
the so obtained topological space  is compact
and, in the case of countable bases,
this topology is precisely the one induced by a very natural metric on
such total orderings.
In virtue of this fact,
after showing that certain kinds of total orderings
build closed subsets and hence are compact subspaces,
and by considering certain quotient spaces 
(with respect to an appropriate equivalence relation)
which turn out to be discrete,
we are able to prove some finiteness results about leading monomial ideals of such algebras, namely:
if $A$ is an algebra over a field $K$ such that $A$ has a countable basis as a free $K$-module,
and if $H$ is any subset of $A$,
then:
\begin{enumerate}[\upshape (1)]\formatenum
\item\label{11}
the number of minimal leading monomial ideals of $H$ with respect to total orderings of
monomials of $A$ is finite,
see Theorem \ref{fini2},
\item\label{22}
the number of leading monomial ideals of $H$ with respect to degree orderings of monomials
of $A$ is finite,
see Theorem \ref{fini3b},
\item\label{33}
the number of leading monomial ideals of $H$ with respect to admissible orderings of monomials
of $A$ is finite whenever $H$  is a (left, right, or two-sided) ideal of $A$ and
$A$ satisfies two multiplicativity conditions, namely, $A$ is a domain and $A$ behaves
multiplicatively on taking leading monomials with respect to admissible orderings,
see Theorem \ref{cocco}.
\end{enumerate}

Carrying on with this topological approach, generalizing
\cite{Sch} and \cite{Sik},
we prove that every (left, right, or two-sided) ideal $J$ of $A$ admits a
$\fraktur{T}\text{-}$universal Gr\"obner basis $U$, that is, $U$ is a Gr\"obner basis of $J$
with respect to each ${\preceq}\in\fraktur{T}$, where $\fraktur{T}$ is a closed subset
of the set of all degree orderings or of all admissible orderings of monomials of $A$.

Statements about the existence of universal Gr\"obner bases, 
for instance in the context of commutative polynomial rings over a field,
are usually infered from a finiteness result similar to \eqref{33}
and from the availability of a division algorithm
by which one can construct reduced Gr\"obner bases, 
a selected finite union of which is then a universal Gr\"obner basis, see \cite{Stu}.

We shall see that, actually,
the topological properties of the considered
spaces of total orderings of monomials, above all
compactness, are sufficient to prove the existence of universal Gr\"obner bases,
even in the more general context treated here.

The algebras on which these results can be applied comprehend at least the algebras of solvable 
type and the enveloping algebras of finite-dimensional Lie algebras.
Some of our results, such as \eqref{33} and the existence of universal Gr\"obner bases in the
just mentioned classes of algebras, are not new, see \cite{WeiUGB} for instance.
New are, in our knowledge, \eqref{11} and~\eqref{22}.

Through \eqref{11} one gains a new insight why
% \eqref{33} holds:
there exist only finitely many leading monomial
ideals of a given ideal with respect to admissible orderings (Theorem \ref{cocco}).
Indeed, 
there exist at most finitely many minimal
such ideals at all with respect to any closed subset of total orderings
(Theorem \ref{fini2}), and the admissible orderings form a closed subset
(Proposition~\ref{AO(N) compact})
and force leading monomial ideals to be minimal
(Corollary~\ref{lollo} of the Macaulay Basis Theorem~\ref{macaulay}).

Through \eqref{22} one gets a deeper intuition why one finds only finitely many 
leading monomial ideals of a given ideal with respect to degree-compatible
orderings (Remark \ref{dco endlich}).
Indeed, degree preservation on taking leading monomials alone
without the com\-pa\-ti\-bi\-li\-ty axiom already implies this behaviour (Theorem~\ref{fini3b}).

Our intention has been also to push the topological methods introduced in
\cite{Sch} and \cite{Sik} to the case
of some further orderings than only admissible ones
and
of some non\-commutative algebras.
Beside the mentioned finiteness results, we have obtained a sort of
\emph{topological framework for orderings of monomials},
which we were able to successfully apply to the study of 
leading monomial ideals
and
universal Gr\"obner bases.
Furthermore, some relations among different kinds of orderings was put to evidence.
Beside those already mentioned, two further topological phenomena came to light:
\begin{enumerate}[\upshape (1)]\formatenum
\addtocounter{enumi}{3}
\item
there exist ``few'' degree-compatible orderings,
that is,
% in other words,
precisely,
the degree-compatible orderings are nowhere dense among the degree orderings,
clearly except for the case of univariate polynomials,
%, that is,
%among all total orderings preserving the degree on taking leading monomials;
see Proposition \ref{nowhere dense} and Remark~\ref{T1},
\item
there is a  relation between topological density and the possibility
to find a universal Gr\"obner basis,
see Remark \ref{denso1}, Lemma \ref{denso2} and Example~\ref{denso3}.
\end{enumerate}

We conclude by saying that
remarkable benefits of the topological approach are, in our opinion,
the high level of generality and the simplicity of the argumentations.
A drawback, at least at first sight, is the non\-constructivity of the proofs.
But who knows? See~\ref{denso1}.
% \ref{denso4}.

%\vskip\baselineskip
\section*{R\'esum\'e}

\noindent
In \cite{Sik}, for semigroups $S$,
Sikora introduced a  natural topology $\mathcal{U}(S)$
on the set $\TO(S)$ of the total orderings on $S$ and proved that $\TO(S)$ is compact
with respect to~$\,\mathcal{U}(S)$.
This can be done actually for any set $S$.

We start with a polynomial ring $K[X]=K[X_1,\ldots,X_t]$ over a field $K$, where $t\in\NN$,
and with several sorts of total orderings on the set $M=\{X^\nu\mid\nu\in\NN_0^t\}$ of the
monomials of $K[X]$, namely, we consider the following subsets of $\TO(M)$:
\begin{enumerate}[\upshape (1)]\formatenum
\item
the set $\WO(M)$ of the total well-orderings on $M$;
\item
the set $\SO_1(M)=\{{\leq}\in\TO(M)\mid m\in M\Rightarrow 1\leq m\}$ of the $1$-founded
orderings on~$M$;
\item
the set $\CO(M)=\{{\leq}\in\TO(M)\mid X^\upsilon\leq X^\nu\Rightarrow
X^{\upsilon+\gamma}\le X^{\nu+\gamma}\}$ of the compatible orderings, or semigroup orderings, 
on~$M$;
\item
the set $\DO(M)=\{{\leq}\in\TO(M)\mid p\in K[X]\Rightarrow \deg(p)=\deg(\LT_\leq(p))\}$ of 
the degree orderings on~$M$;
\item
the set $\AO(M)=\SO_1(M)\cap\CO(M)$ of the admissible orderings, or monoid orderings, on $M$;
\item
the set $\DCO(M)=\DO(M)\cap\CO(M)$ of the degree-compatible orderings on~$M$.
\end{enumerate}
Then we have the following results:
\begin{enumerate}[\upshape (1)]\formatenum
\item $\SO_1(M)$ is closed in $\TO(M)$;
\item $\CO(M)$ is closed in $\TO(M)$;
\item $\DO(M)$ is closed in $\TO(M)$ and $\DO(M)\subseteq\WO(M)\cap\SO_1(M)$;
\item $\AO(M)$ is closed in $\TO(M)$ and $\AO(M)=\WO(M)\cap\CO(M)$;
\item $\DCO(M)$ is closed in $\TO(M)$;
\item $\DCO(M)$ is nowhere dense in $\DO(M)$ if $t>1$, otherwise $\DCO(M)=\DO(M)$.
\end{enumerate}
The Venn diagram in Figure~\ref{venn} sketches the situation.
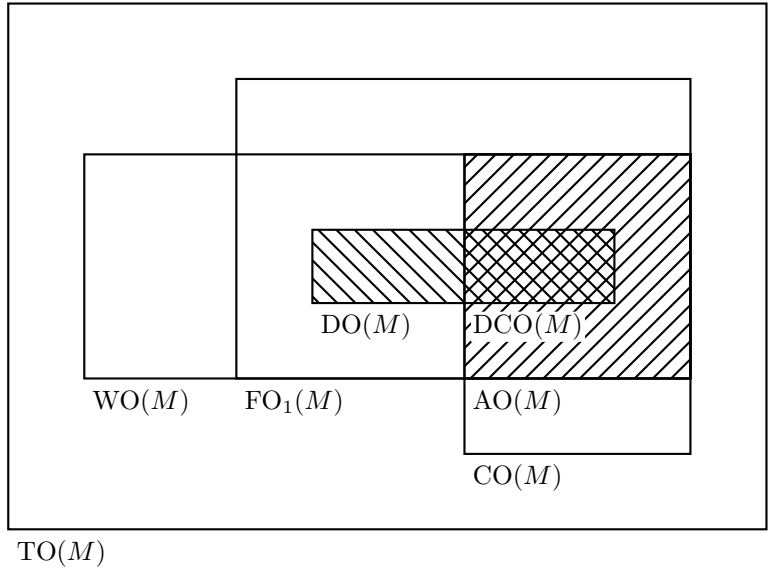
\begin{figure}[ht]
\centering
%\scalebox{0.99}
{
\begin{pspicture}(0,0)(12,9)
\psset{fillstyle=none}
\psframe(1,1)(11,8)
\uput[-45](1,1){$\TO(M)$}
\psframe(2,3)(10,6)
\uput[-45](2,3){$\WO(M)$}
\psframe(4,3)(10,7)
\uput[-45](4,3){$\SO_1(M)$}
\psframe(7,2)(10,6)
\uput[-45](7,2){$\CO(M)$}
\psframe[fillstyle=vlines](5,4)(9,5)
\psframe[fillstyle=hlines](7,3)(10,6)
\uput[-45](5,4){$\DO(M)$}
\uput[-45](7,3){$\AO(M)$}
\psframe[fillstyle=solid,fillcolor=white,linecolor=white](7.1,3.5)(8.6,3.9)
\uput[-45](7,4){$\DCO(M)$}
\end{pspicture}
}
\caption{Subspaces of total orderings of monomials}%
\label{venn}
\end{figure}
% \begin{figure}[ht]
% \centering
% \includegraphics[bb=0 0 442 259,scale=0.75]{Venn.eps}
% \caption{Subspaces of total orderings of monomials}
% \label{venn}
% \end{figure}

After these preliminaries, given any $\fraktur{S}\subseteq\TO(M)$ and any $E\subseteq K[X]$,
we consider the set
$\TTOO_\fraktur{S}(E)=\{\LM_\leq(E)\mid {\leq}\in\fraktur{S}\}$
of the leading monomial ideals $\LM_\leq(E)$ of $E$ with respect to the
total orderings ${\leq}\in\fraktur{S}$ and the set
$\LLOO_\fraktur{S}(E)$ of the minimal elements of $\TTOO_\fraktur{S}(E)$ with respect
to the inclusion relation~${\subseteq}$, and show that $\LLOO_\fraktur{S}(E)$ is  finite  if $\fraktur{S}$
is closed in $\TO(M)$.

The proof goes as follows.
The set $\min_E(\fraktur{S})$ of the elements ${\leq}\in\fraktur{S}$
such that $\LM_\leq(E)$ is ${\subseteq}$-minimal in $\TTOO_\fraktur{S}(E)$
is closed in $\fraktur{S}$,
and hence $\min_E(\fraktur{S})$ is compact under our hypo\-thesis on~$\fraktur{S}$.
Thus the quotient space $\min_E(\fraktur{S}) \mod {\sim}_E$ of $\min_E(\fraktur{S})$, where
${\leq} \sim_E {\leq'}$ if and only if $\LM_{\leq}(E)=\LM_{\leq'}(E)$, is compact.
Since $\min_E(\fraktur{S}) \mod {\sim}_E$ is also discrete, it follows that
$\min_E(\fraktur{S}) \mod {\sim}_E$ is finite.
Of course, there exists a canonical bijection between $\min_E(\fraktur{S}) \mod {\sim}_E$
and $\LLOO_\fraktur{S}(E)$.

Now we turn our attention to degree orderings. % on $M$.
$\DO(M)$ and $\DO(M)\mod{\sim}_E$ are compact.
We show by means of Hilbert functions that $\DO(M)\mod{\sim}_E$ is discrete and hence finite.
Thus $\TTOO_{\DO(M)}(E)$ is finite, that is, there exist at most finitely many
leading monomial ideals of $E$ from degree orderings.
The idea of applying Hilbert functions in such ``topological contexts'' was already used 
in a similar manner by Schwartz in~\cite{Sch} in the case of admissible orderings.

When considering closed subsets $\fraktur{S}$ of $\AO(M)$, we obtain a similar and well-known
finiteness result. Indeed, in this case, if $I$ is an ideal of $K[X]$,
the Macaulay Basis Theorem holds and comes to our aid as it implies that
$\TTOO_\fraktur{S}(I)=\LLOO_\fraktur{S}(I)$, which we already know to be finite.

Next let $\slPhi$ be a $K$-module isomorphism of $V$ in $K[X]$ and consider
the $K$-basis $N=\slPhi^{-1}(M)$ of $V$.
Then $\slPhi$ induces a homeomorphism $\phi$ of $\TO(N)$ in $\TO(M)$.
Now, given a total ordering ${\preceq}$ on $N$, we may speak of the ${\preceq}$-leading  
component $\lm_{\preceq}(v)\in N$ in the unique representation $v=\sum_{n\in N}c_n n$
with $c_n\in K\wo\{0\}$ of any element $v\in V$ as a $K$-linear combination over $N$.
Further, given $H\subseteq V$, we consider the ideal
\begin{equation*}
\LM_{\preceq}(H)=
\langle \slPhi(\lm_{\preceq}(h))\mid h\in H\rangle
=\langle \LM_{\phi({\preceq})}(\slPhi(h))\mid h\in H\rangle
\end{equation*}
of $K[X]$.
For all  $H\subseteq V$,  $E\subseteq K[X]$,  ${\preceq}\in\TO(N)$,
 ${\leq}\in\TO(M)$,  $\fraktur{T}\subseteq\TO(N)$,  $\fraktur{S}\subseteq\TO(M)$
 we have:
\begin{enumerate}[\upshape (1)]\formatenum
\item
$\LM_{\preceq}(H)=\LM_{\phi({\preceq})}(\slPhi(H))$
and
$\LM_\leq(E)=\LM_{\phi^{-1}({\leq})}(\slPhi^{-1}(E))$;
\item
$\TTOO_\fraktur{T}(H)=\TTOO_{\phi(\fraktur{T})}(\slPhi(H))$
and
$\TTOO_{\fraktur{S}}(E)=\TTOO_{\phi^{-1}(\fraktur{S})}(\slPhi^{-1}(E))$;
\item
$\LLOO_\fraktur{T}(H)=\LLOO_{\phi(\fraktur{T})}(\slPhi(H))$
and
$\LLOO_{\fraktur{S}}(E)=\LLOO_{\phi^{-1}(\fraktur{S})}(\slPhi^{-1}(E))$.
\end{enumerate}
Thus what we have said above about $K[X]$ easily translates to $V$.
With one exception:
assuming that $\fraktur{T}$ is closed in $\AO(N)$,
the equality $\TTOO_\fraktur{T}(H)=\LLOO_\fraktur{T}(H)$
%and thus the consequent finiteness result
holds so far only under the hypothesis that $H=\slPhi^{-1}(I)$ for some ideal $I$
of~$K[X]$.

Therefore, when considering the set $\AO(N)=\phi^{-1}(\AO(M))$ of the admissible orderings on
$N$, we replace the $K$-module $V$ by an associative but not necessarily commutative
$K$-algebra $A$ that is a domain and is isomorphic to $K[X]$ as a $K$-module.
Assuming similar multiplicativity properties of $A$ on taking leading monomials
%over $\AO(N)$
as in the case of $K[X]$,
we prove a generalized version of the Macaulay Basis Theorem, which then implies
the equality $\TTOO_\fraktur{T}(J)=\LLOO_\fraktur{T}(J)$
for each closed $\fraktur{T}\subseteq\AO(N)$
and each (left, right, two-sided) ideal $J\subseteq A$.

Finally, for a $K$-algebra $A$ isomorphic to $K[X]$ as a  $K$-module,
following this topological approach and applying the results obtained so far,
we show that every (left, right, two-sided) ideal of $A$ admits a
$\fraktur{T}$-universal Gr\"obner basis,
where $\fraktur{T}$ is any closed subset of $\DO(N)$.
To prove a similar result for closed subsets $\fraktur{T}$ of $\AO(N)$,
we have to require that $A$ is a domain and is multiplicative on taking leading monomials
over~$\fraktur{T}$.

As mentioned before, our proofs of theorems about universal Gr\"obner bases
do not rely on the finiteness of the total number of
leading monomial ideals.
Indeed, the statements about universal Gr\"obner bases as well as the finiteness results
both descend directly from some of the topological properties of total orderings and,
partly, from the generalized Macaulay Basis Theorem.

%\vskip\baselineskip
\section*{General remark}

\noindent
In this paper all the statements involving ideals of non\-commutative rings are proved only
for left ideals.
These statements translate word by word to right and two-sided ideals, too.

%\vskip\baselineskip
\section{Topological spaces of total orderings on sets}\label{chp1sec1}

\noindent
% In this and the following sections,
% $\NN$ denotes the set of the strictly positive integers,
% whereas $\NN_0$ stays for $\NN\cup\{0\}$.
In this section, let $S$ be a set.

\begin{dfn}\label{def 1}
A \notion{total ordering} on $S$ is a binary relation ${\preceq}$ on $S$ such that
it holds
%$a \preceq a$ (reflexivity),
antisymmetry: $a \preceq b \und b \preceq a \dann a = b$, % (antisymmetry),
transitivity: $a \preceq b \und b \preceq c \dann a \preceq c$, % (transitivity),
totality: $a \preceq b \oder b \preceq a$, % (totality).
for all $a,b,c\in S$.
Totality implies reflexivity: $a \preceq a$ for all $a\in S$.
The non\-empty set of all total orderings on $S$ is denoted  $\TO(S)$.

Given any ordered pair $(a,b)\in S\times S$,
let $\fraktur{U}_{(a,b)}$ be the set of all total orderings ${\preceq}$ on $S$
for which $a \preceq b$.
Let $\mathcal{U}(S)$ be the coarsest topology of $S$ for which all the sets
$\fraktur{U}_{(a,b)}$ are open.
This is the topology
%$\mathcal{U}(S)$
for which $\{\fraktur{U}_{(a,b)}\mid (a,b)\in S\times S\}$ is a subbasis, %of $\mathcal{U}(S)$,
that is, the open sets in $\mathcal{U}(S)$ are precisely the unions of finite intersections 
of sets of the form $\fraktur{U}_{(a,b)}$.
Observe that $\fraktur{U}_{(a,a)}=\TO(S)$ and that 
$\fraktur{U}_{(a,b)}=\TO(S)\wo \fraktur{U}_{(b,a)}$ if~$a\ne b$,
so that the  sets $\fraktur{U}_{(a,b)}$ %for $(a,b)\in S\times S$
are also closed.

Let $\mathbf{S}$ be any \notion{filtration} of $S$,
%that is, $\mathbf{S}$ is a family $(S_i)_{i\in\NN_0}$ of subsets $S_i$ of $S$ with
that is, $\mathbf{S}=(S_i)_{i\in\NN_0}$ is a family of subsets $S_i$ of $S$ such that
(a)~${S_{0}=\emptyset}$,
(b)~${S_{i}\subseteq S_{i+1}}$ for all ${i\in\NN_0}$, %and
(c)~$S=\bigcup_{i\in\NN_0}S_i$.
% We define the map %function
Let us define the function
${d_\mathbf{S}:\TO(S)\times\TO(S)\rightarrow\RR}$ by the rule %putting
${d_\mathbf{S}({\preceq'},{\preceq''})= 2^{-r}}$ where %with
$\smash{r=\sup{}\{i\in\NN_0\mid {\preceq'}\res_{S_i} = {\preceq''}\res_{S_i}\}}$.
He\-re ${\res}$ denotes restriction.
First of all, we have $\{0\} \subseteq \Im(d_\mathbf{S}) \subseteq [0,1]$.
Because $\mathbf{S}$ is exhaustive by~(c),
it holds ${d_\mathbf{S}({\preceq'},{\preceq''})=0}$ if and only if
${\preceq'} = {\preceq''}$.
Further, ${d_\mathbf{S}({\preceq'},{\preceq''})}={d_\mathbf{S}({\preceq''},{\preceq'})}$.
Finally, %one sees that
${d_\mathbf{S}({\preceq'},{\preceq'''})}
\le {d_\mathbf{S}({\preceq'},{\preceq''})}+{d_\mathbf{S}({\preceq''},{\preceq'''})}$,
since %as
${d_\mathbf{S}({\preceq'},{\preceq'''})}
\le \max{}\{{d_\mathbf{S}({\preceq'},{\preceq''})},{d_\mathbf{S}({\preceq''},{\preceq'''})}\}$.
%as one easily sees.
Thus $d_\mathbf{S}$ is a metric on $\TO(S)$, dependent on the choice of the filtration 
$\mathbf{S}$ of $S$.
\end{dfn}

\begin{thm}\label{U=N}
Assume that there exists a filtration $\mathbf{S}=(S_i)_{i\in\NN_0}$ of $S$
such that each of the sets $S_i$ is finite.
Let $\mathcal{N}(S)$ be the topology of $S$ induced by the metric $d_\mathbf{S}$,
% defined above,
that is more precisely,
$\fraktur{N}\in\mathcal{N}(S)$ if and only if $\,\fraktur{N}$ is a union of finite 
intersections of sets of the form
%\begin{equation*}
$
\fraktur{N}_{r}({\preceq})=
{\{{\preceq'}\in\TO(S)\mid d_\mathbf{S}({\preceq},{\preceq'})<2^{-r}\}}
$
%\end{equation*}
where $r\in\NN_0$ and ${\preceq}\in\TO(S)$.
Then it holds ${\mathcal{N}(S)=\mathcal{U}(S)}$, 
in particular the topology $\mathcal{N}(S)$ is independent of the choice of $\mathbf{S}$,
and the topology $\mathcal{U}(S)$ is Hausdorff.
\end{thm}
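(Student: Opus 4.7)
The plan is to prove the equality $\mathcal{N}(S)=\mathcal{U}(S)$ by showing that each topology is finer than the other; once the equality is established, independence of the filtration is automatic since $\mathcal{U}(S)$ manifestly depends only on $S$, and the Hausdorff property is inherited from the fact that every metric topology is Hausdorff. The key observation powering both inclusions is the following reformulation of the metric: unwinding the definition, $d_\mathbf{S}({\preceq},{\preceq'})<2^{-r}$ holds if and only if ${\preceq}\res_{S_{r+1}}={\preceq'}\res_{S_{r+1}}$. Hence
\begin{equation*}
\fraktur{N}_r({\preceq}) \;=\; \{{\preceq'}\in\TO(S)\mid {\preceq'}\res_{S_{r+1}} = {\preceq}\res_{S_{r+1}}\}.
\end{equation*}

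For the inclusion $\mathcal{N}(S)\subseteq\mathcal{U}(S)$, I would take any basic metric neighbourhood $\fraktur{N}_r({\preceq})$. Since $S_{r+1}$ is finite by hypothesis, the set $S_{r+1}\times S_{r+1}$ is finite, and an ordering ${\preceq'}$ agrees with ${\preceq}$ on $S_{r+1}$ exactly when for each pair $(a,b)\in S_{r+1}\times S_{r+1}$ with $a\preceq b$ we have ${\preceq'}\in\fraktur{U}_{(a,b)}$. Thus
\begin{equation*}
\fraktur{N}_r({\preceq}) \;=\; \bigcap_{\substack{(a,b)\in S_{r+1}\times S_{r+1}\\ a\preceq b}} \fraktur{U}_{(a,b)},
\end{equation*}
a finite intersection of subbasic open sets of $\mathcal{U}(S)$, hence an open set in $\mathcal{U}(S)$.

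For the reverse inclusion $\mathcal{U}(S)\subseteq\mathcal{N}(S)$, it suffices to show that each subbasic set $\fraktur{U}_{(a,b)}$ is open in $\mathcal{N}(S)$. Pick any ${\preceq}\in\fraktur{U}_{(a,b)}$. By the exhaustiveness of $\mathbf{S}$ there exists $i\in\NN_0$ such that $a,b\in S_i$. Choose any $r\in\NN_0$ with $r+1\geq i$. Then every ${\preceq'}\in\fraktur{N}_r({\preceq})$ agrees with ${\preceq}$ on $S_{r+1}\supseteq S_i$, in particular $a\preceq' b$, so $\fraktur{N}_r({\preceq})\subseteq\fraktur{U}_{(a,b)}$. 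Thus $\fraktur{U}_{(a,b)}$ is a neighbourhood of each of its points in $\mathcal{N}(S)$, hence open.

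Having established $\mathcal{N}(S)=\mathcal{U}(S)$, the remaining assertions are immediate: the right-hand side does not mention $\mathbf{S}$, so $\mathcal{N}(S)$ cannot depend on the choice of filtration; and since $d_\mathbf{S}$ is genuinely a metric (as argued in the preceding definition), the topology it induces is Hausdorff, hence so is $\mathcal{U}(S)$. The only mildly delicate point I expect is the careful parsing of the supremum in the definition of $d_\mathbf{S}$, in particular getting the index $r+1$ versus $r$ right in the reformulation of $\fraktur{N}_r({\preceq})$; everything else is bookkeeping made possible by the finiteness of each $S_i$.
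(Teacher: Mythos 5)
Your proof is correct and follows essentially the same route as the paper's: both directions rest on the identity $\fraktur{N}_r({\preceq})=\bigcap\{\fraktur{U}_{(a,b)}\mid (a,b)\in S_{r+1}\times S_{r+1},\ a\preceq b\}$, which is exactly the computation in the paper, and your handling of the index $r+1$ in the reformulation of the metric ball is right. The concluding remarks on independence of the filtration and the Hausdorff property are also as intended.
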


\begin{proof}
Let $r\in\NN_0$ and ${\preceq} \in \TO(S)$.
We claim that $\fraktur{N}_{r}({\preceq})\in\mathcal{U}(S)$.
Indeed, let $\fraktur{U}=\bigcap_{(a,b)}\fraktur{U}_{(a,b)}$,
where the intersection is taken over all ordered pairs $(a,b)$ in $S_{r+1}\times S_{r+1}$
with $a \preceq b$.
Then ${\preceq}\in\fraktur{U}\in\mathcal{U}(S)$.
Hence ${\preceq'} \in \fraktur{N}_{r}({\preceq})$ if and only if
${\preceq'}\res_{S_{r+1}}\!=\,{\preceq}\res_{S_{r+1}}$,
and this is the case if and only if it holds $a \preceq' b \gdw a \preceq b$ for all 
$(a,b)\in S_{r+1} \times S_{r+1}$,
which is true if and only if ${\preceq'}\in\fraktur{U}$.
Thus $\fraktur{N}_r({\preceq})=\fraktur{U}$, and this shows that 
$\mathcal{N}(S)\subseteq\mathcal{U}(S)$.

On the other hand, let $(a,b)\in S\times S$ be any ordered
pair. %such that $U_{(a,b)}\ne\emptyset$.
We claim that the set $\fraktur{U}_{(a,b)}$ is open with respect to the metric $d_\mathbf{S}$.
Let ${\preceq}\in \fraktur{U}_{(a,b)}$, so that $a\preceq b$.
We find $r\in\NN_0$ such that $(a,b)\in S_{r+1}\times S_{r+1}$.
If ${\preceq'}\in \fraktur{N}_r({\preceq})$,
then ${\preceq'}\res_{S_{r+1}}\! = {\preceq}\res_{S_{r+1}}$,
in particular $a\preceq' b$, so that ${\preceq'}\in \fraktur{U}_{(a,b)}$,
thus $\fraktur{N}_r({\preceq})\subseteq \fraktur{U}_{(a,b)}$.
Hence $\fraktur{U}_{(a,b)}$ is open with respect to $\mathcal{N}(S)$,
and we conclude that $\mathcal{U}(S)\subseteq\mathcal{N}(S)$.
\end{proof}

% \begin{cnv}
% From now on, when we say ``the topological space $\TO(S)$'',
% we always mean $\TO(S)$ endowed with the topology $\mathcal{U}(S)$.
% With ``topological subspace of $\TO(S)$'' we always intend a subset of $\TO(S)$
% provided with its relative topology with respect to~$\mathcal{U}(S)$.
% \end{cnv}

\begin{cnv}
Henceforth, unless otherwise stated, whenever we refer to topological properties of $\TO(S)$,
we always intend that $\TO(S)$ is provided with the topology $\mathcal{U}(S)$.
Subsets of $\TO(S)$ are tacitly furnished with their relative to\-po\-lo\-gy with respect to
$\mathcal{U}(S)$.
Quotient sets of $\TO(S)$ by equivalence relations are  equipped with their quotient
topology with respect to $\mathcal{U}(S)$.
\end{cnv}

\begin{dfn}\label{axiom}
A \notion{filter} over a  set $X$ is a subset $\mathcal{F}$ of the power set $\mathcal{P}(X)$
of $X$
% with
that enjoys the properties
% such that
(a)~$X\in\mathcal{F}$,
(b)~$\emptyset\notin\mathcal{F}$,
(c)~$A\subseteq B\subseteq X\und A\in \mathcal{F}\dann B\in \mathcal{F}$,
(d)~${A\in \mathcal{F}\und B\in \mathcal{F}\dann A\cap B\in \mathcal{F}}$.

An \notion{ultrafilter}  over $X$ is a filter $\mathcal{L}$ over $X$ that fulfills
the further property
(e)~$A\subseteq X\dann A\in \mathcal{L} \oder X\wo A\in \mathcal{L}$.
The disjunction in (e) is exclusive by (d) and (b).
Equivalently, an ultrafilter over $X$ is a maximal filter over $X$ with respect to inclusion.
\end{dfn}

\begin{thm}\label{compact}
$\TO(S)$ is compact.
\end{thm}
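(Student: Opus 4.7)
The plan is to prove compactness via the ultrafilter criterion, which is clearly why the author has just introduced filters and ultrafilters in Definition \ref{axiom}. Recall that a topological space $X$ is compact if and only if every ultrafilter on $X$ converges to some point of $X$, where $\mathcal{L}$ converges to $x$ means that every neighbourhood of $x$ belongs to $\mathcal{L}$; and in order to check convergence it is enough to verify that every element of a subbasis at $x$ lies in $\mathcal{L}$, since $\mathcal{L}$ is closed under finite intersections and supersets.

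So I would start from an arbitrary ultrafilter $\mathcal{L}$ over $\TO(S)$ and try to construct a total ordering ${\preceq}\in\TO(S)$ to which $\mathcal{L}$ converges. The natural candidate, read off the subbasis $\{\fraktur{U}_{(a,b)}\}$, is to declare
\[
a\preceq b \quad :\Longleftrightarrow\quad \fraktur{U}_{(a,b)}\in\mathcal{L}.
\]
Then I would check, in order, that $\preceq$ satisfies the three axioms of Definition \ref{def 1}. Totality is immediate from ultrafilter axiom~(e): for $a\ne b$ the complement of $\fraktur{U}_{(a,b)}$ is $\fraktur{U}_{(b,a)}$, so at least one of them lies in $\mathcal{L}$; and $\fraktur{U}_{(a,a)}=\TO(S)\in\mathcal{L}$ by axiom~(a). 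Antisymmetry follows from axioms~(b) and~(d): if $a\preceq b$ and $b\preceq a$ with $a\ne b$, then $\fraktur{U}_{(a,b)}\cap\fraktur{U}_{(b,a)}\in\mathcal{L}$ would be non\-empty, but every element of this intersection is a total ordering with $a\preceq' b$ and $b\preceq' a$ and $a\ne b$, contradicting the antisymmetry of $\preceq'$. Transitivity is the nicest step: if $a\preceq b$ and $b\preceq c$, then $\fraktur{U}_{(a,b)}\cap\fraktur{U}_{(b,c)}\in\mathcal{L}$ by~(d), and this intersection is contained in $\fraktur{U}_{(a,c)}$ because every total ordering is transitive, so $\fraktur{U}_{(a,c)}\in\mathcal{L}$ by~(c).

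Once $\preceq$ is known to lie in $\TO(S)$, convergence of $\mathcal{L}$ to $\preceq$ is essentially by construction: a subbasic open neighbourhood of $\preceq$ has the form $\fraktur{U}_{(a,b)}$ with $a\preceq b$, and for such a pair $\fraktur{U}_{(a,b)}\in\mathcal{L}$ by the very definition of $\preceq$. Hence every subbasic, therefore every open, neighbourhood of $\preceq$ belongs to $\mathcal{L}$, so $\mathcal{L}\to{\preceq}$ in $\TO(S)$, and $\TO(S)$ is compact.

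The only delicate point I foresee is conceptual rather than technical: one must remember that the ultrafilter convergence criterion itself relies on the axiom of choice (to produce ultrafilters extending given filters), and then that checking convergence at a subbasis is legitimate. A completely equivalent and perhaps more transparent alternative would be to embed $\TO(S)\hookrightarrow\{0,1\}^{S\times S}$ via the characteristic function of $\preceq$, observe that the topology $\mathcal{U}(S)$ coincides with the subspace topology inherited from the product, note that antisymmetry, transitivity and totality are each intersections of closed conditions on the coordinates, and conclude compactness from Tychonoff's theorem; but given the placement of Definition \ref{axiom} immediately before the statement, the ultrafilter route is manifestly the intended one.
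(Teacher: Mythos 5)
Your proof is correct, but it is not the route the paper takes. You apply the ultrafilter convergence criterion directly to the space: an arbitrary ultrafilter $\mathcal{L}$ over $\TO(S)$ determines a candidate limit by $a\preceq b:\Leftrightarrow\fraktur{U}_{(a,b)}\in\mathcal{L}$, the ordering axioms follow from the filter axioms exactly as you say (totality from (a) and (e) together with $\fraktur{U}_{(b,a)}=\TO(S)\wo\fraktur{U}_{(a,b)}$ for $a\ne b$; antisymmetry from (b) and (d); transitivity from (c) and (d) together with $\fraktur{U}_{(a,b)}\cap\fraktur{U}_{(b,c)}\subseteq\fraktur{U}_{(a,c)}$), and convergence is then immediate since checking subbasic neighbourhoods suffices, a filter being closed under finite intersections and supersets. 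The paper instead argues by contradiction: it assumes a cover by subbasic sets $\fraktur{U}_{(a_i,b_i)}$ with no finite subcover (tacitly invoking the Alexander subbase lemma to reduce to subbasic covers), extracts for each finite $s\subseteq I$ a witness ${\preceq_s}$ with $a_i\succ_s b_i$ for $i\in s$, builds an ultrafilter on the set $I^\ast$ of finite subsets of $I$, and takes the ultralimit $x\preceq y\Leftrightarrow\{s\mid x\preceq_s y\}\in\mathcal{L}$ to produce an uncovered ordering. Both arguments rest on the Ultrafilter Lemma and verify the ordering axioms by the same filter bookkeeping; yours is more economical in that it needs no reduction to subbasic covers and no auxiliary index set, while the paper's has the flavour of an ultraproduct construction. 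Your closing remark about the equivalent embedding into $\{0,1\}^{S\times S}$ as a closed subspace plus Tychonoff is also a valid (and standard) alternative.
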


\begin{proof}
Suppose by contradiction that $\TO(S)$ is not compact.
Then we find an infinite index set $I$ and families $(a_i)_{i\in I}$ and $(b_i)_{i\in I}$
of elements $a_i,b_i\in S$ such that $(\mathfrak{U}_{(a_i,b_i)})_{i\in I}$ is a covering of
$\TO(S)$ which admits no finite subcovering.
Thus for each finite subset $s\subseteq I$ there exists ${\preceq_s}\in\TO(S)$ such that
${\preceq_s}\notin\bigcup_{i\in s}\mathfrak{U}_{(a_i,b_i)}$,
that is, for all ${i\in s}$ it holds~${a_i \succ_s b_i}$.

Let $I^\ast$ be the set of all non\-empty finite subsets of $I$.
For each $s\in I^\ast$
let us define 
$s^\ast = {\{t\in I^\ast\mid s\subseteq t\}}$.
Since $s\in s^\ast$ for all $s\in I^\ast$
and $s_1^\ast\cap s_2^\ast=(s_1\cup s_2)^\ast$ for all ${s_1,s_2\in I^\ast}$,
the set $\mathcal{S}={\{s^\ast\mid s\in I^\ast\}}$ has the finite intersection property,
that is to say,
any finite intersection of elements of $\mathcal{S}$ is non\-empty.
Therefore  ${\mathcal{F}=
\{Y\in\mathcal{P}(I^\ast)\mid
\exists\,n\in\NN\;\exists\,Z_1,\ldots,Z_n\in\mathcal{S}:Z_1\cap\ldots\cap Z_1\subseteq Y\}}$
is a filter over $I^\ast$ that extends $\mathcal{S}$.
Hence, by the Ultrafilter Lemma, which descends from Zorn's Lemma,
there exists an ultrafilter $\mathcal{L}$ over $I^\ast$ that extends $\mathcal{F}$,
so that $s^\ast\in \mathcal{L}$ for all $s\in I^\ast$.

We fix a family $(\preceq_s)_{s\in I^\ast}$ of total ordering ${\preceq_s}$ on $S$ as above
and define a binary relation ${\preceq}$ on $S$ by
$x\preceq y \Leftrightarrow \{s\in I^\ast\mid x\preceq_s y\}\in \mathcal{L}$.
% Let ${\preceq}$ be a binary relation on $S$ defined by
% $a\preceq b \Leftrightarrow \{s\in I^\ast\mid a\preceq_s b\}\in \mathcal{L}$.
By axioms (d) and (b) of \ref{axiom}, ${\preceq}$ is antisymmetric.
By axioms (d) and (c) of \ref{axiom}, ${\preceq}$ is transitive.
By axioms (e) and (c) of \ref{axiom}, ${\preceq}$ is total.
So ${\preceq}\in\TO(S)$.
On the other hand, by our choice of the orderings ${\preceq_s}$,
it holds $a_i\succ b_i$ for all $i\in I$,
thus ${\preceq}\notin\bigcup_{i\in I}\mathfrak{U}_{(a_i,b_i)}=\TO(S)$, a contradiction.
\end{proof}

\begin{dfn}
For each $a\in S$ let %we define
${{}\SO_{a}(S)=\{{\preceq}\in\TO(S) \mid \forall\,b\in S :a\preceq b\}}$,
the set of all \notion{$a$-founded orderings} on $S$.
\end{dfn}

\begin{cor}\label{closed}
For each $a\in S$ the set ${}\SO_{a}(S)$ is closed in $\TO(S)$,
and hence $\SO_{a}(S)$ is a compact subspace of ${}\TO(S)$.
\end{cor}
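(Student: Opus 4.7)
The plan is to write $\SO_a(S)$ as an intersection of subbasic sets $\fraktur{U}_{(a,b)}$ and exploit the fact, already noted in Definition~\ref{def 1}, that each $\fraktur{U}_{(a,b)}$ is simultaneously open and closed in $\mathcal{U}(S)$.

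More precisely, I would start by observing directly from the definitions that
\begin{equation*}
\SO_a(S) \;=\; \bigcap_{b\in S} \fraktur{U}_{(a,b)},
\end{equation*}
since a total ordering ${\preceq}$ satisfies $a \preceq b$ for every $b \in S$ if and only if it belongs to every $\fraktur{U}_{(a,b)}$. This unpacks immediately from the definitions of $\SO_a(S)$ and $\fraktur{U}_{(a,b)}$, with no delicate verifications required.

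Next I would invoke the remark in Definition~\ref{def 1} that for $a \ne b$ we have $\fraktur{U}_{(a,b)} = \TO(S) \setminus \fraktur{U}_{(b,a)}$, so each $\fraktur{U}_{(a,b)}$ is closed in $\TO(S)$ (the case $a = b$ giving $\fraktur{U}_{(a,a)} = \TO(S)$, which is trivially closed). An arbitrary intersection of closed sets is closed, so $\SO_a(S)$ is closed in $\TO(S)$.

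Finally, by Theorem~\ref{compact} the ambient space $\TO(S)$ is compact, and a closed subspace of a compact space is compact; hence $\SO_a(S)$ is a compact subspace of $\TO(S)$. There is no serious obstacle here: the only point to watch is to spell out the equality displayed above correctly, but it falls out directly from the definitions.
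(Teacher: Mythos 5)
Your proof is correct and follows essentially the same route as the paper: the identity $\SO_a(S)=\bigcap_{b\in S}\fraktur{U}_{(a,b)}$, the clopenness of the subbasic sets noted in Definition~\ref{def 1}, and compactness of $\TO(S)$ from Theorem~\ref{compact}. (If anything, your version is slightly cleaner, since the paper's proof gratuitously inserts a countability hypothesis that Theorem~\ref{compact} does not require.)
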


\begin{proof}
It holds 
$\SO_{a}(S)=\bigcap_{b\in S}\fraktur{U}_{(a,b)}$,
%=\bigcap_{b\in S\wo\{a\}}\fraktur{U}_{(a,b)}
%=\TO(S)\wo\bigcup_{b\in S\wo\{a\}}\fraktur{U}_{(b,a)}$,
thus $\SO_a(S)$ is closed in $\TO(S)$ as each $\fraktur{U}_{(a,b)}$ is closed in $\TO(S)$
as observed in \ref{def 1}.
If $S$ is countable, then $\TO(S)$ is compact by \ref{compact}, and hence,
as a closed subset of a compact set, $\SO_a(S)$ equipped with its relative 
topology is compact.
\end{proof}

%\vskip\baselineskip
\section{Leading monomial ideals from total orderings}\label{chp1sec2}

\noindent
%We keep the notation of the previous section.
Let $t\in\NN$, let $K$ be a field,
and let $K[X]$ denote the commutative polynomial ring $K[X_1,\ldots,X_t]$.

\begin{rec+dfn}\label{Supp}
The countable set $M=\{X^\nu\mid\nu\in\NN_0^t\}$ of the
\notion{monomials of $K[X]$} is a basis of the $K$-module $K[X]$,
often referred to as the \notion{canonical $K$-basis of $K[X]$}.
We fix once for all this $K$-basis $M$ of $K[X]$.

Thus each  $p\in K[X]$ can be  written  in \notion{canonical form}
as $\sum_{\nu\in\supp(p)}\alpha_{\nu}X^\nu$
for a uniquely determined finite subset $\supp(p)$ of $\NN_0^t$ such that
$\alpha_{\nu}\in K\wo\{0\}$ for all $\nu\in\supp(p)$.
Notice that $\supp(p)=\emptyset$ if and only if $p=0$.

For each $p\in K[X]$ let us define the subset $\Supp(p)=\{X^\nu\mid \nu\in\supp(p)\}$ of $M$,
which we
call the \notion{support of $p$}.
Clearly, $\Supp(p)=\emptyset$ if and only if $p=0$.
We also put ${\Supp(E)=\bigcup_{e\in E}\Supp(e)}$ for each subset $E$ of $K[X]$. 

For each $p\in K[X]\wo\{0\}$ and each ${\leq}\in\TO(M)$ we denote by $\LT_{\leq}(p)$
the uniquely determined maximal element of $\Supp(p)$ with respect to ${\leq}$
and call $\LT_\leq(p)$ the \notion{leading monomial of $p$ with respect to~${\leq}$}.
In this situation, there exists a unique   $\alpha\in K\wo\{0\}$ 
such that either
$p-\alpha\LM_\leq(p)=0$ or $\LM_\leq(p-\alpha\LM_\leq(p))<\LM_\leq(p)$.
Such element $\alpha$ is denoted $\LC_\leq(p)$
and called the \notion{leading coefficient of $p$ with respect to~${\leq}$}.

For each $E\subseteq K[X]$ and each ${\leq}\in\TO(M)$ we denote by $\LT_\leq(E)$ 
the monomial ideal ${\langle\LT_{\leq}(e)\mid e\in E\wo\{0\}\rangle}$ of $K[X]$, and we call
$\LT_{\leq}(E)$ the \notion{leading monomial ideal of $E$ with respect to~${\leq}$}.
% or simply \notion{from~${\leq}$}.

Finally,
let $\TTOO_\fraktur{S}(E)=\{\LT_{\leq}(E)\mid{\leq}\in\fraktur{S}\}$,
for $E\subseteq K[X]$ and $\fraktur{S}\subseteq\TO(M)$,
be the set of all \notion{leading monomial ideals of $E$ from $\fraktur{S}$}.
%We write simply $\TTOO(E)$ instead of $\TTOO_{\TO(M)}(E)$.
\end{rec+dfn}

\begin{rem}\label{cox242}
We shall, almost always tacitly, make use of the following well-known results,
see \cite[II.4.2 \& II.4.4]{Cox}.

Let $\Nu\subseteq\NN_0^t$.
Then a monomial $X^\upsilon$ of $K[X]$ lies in the ideal
$\langle X^\nu\mid\nu\in\Nu\rangle$ of $K[X]$
if and only if
there exists $\gamma\in\Nu$ such that $X^\gamma$ divides~$X^\upsilon$.

From this it follows that two monomials ideals are equal if and only if they contain the
same monomials.
\end{rem}

\begin{rem}\label{LT=LT2}
If $p\in K[X]$ and 
${\leq},{\leq'}\in\TO(M)$ are such that ${\leq}$ and ${\leq'}$ agree on $\Supp(p)$,
then clearly $\LT_{\leq}(p)=\LT_{\leq'}(p)$.

Hence, if ${\leq},{\leq'}\in\TO(M)$ and $F\subseteq K[X]$
are such that
${\leq}$ and ${\leq'}$ agree on $\Supp(F)$,
then
$\LM_{\leq}(F)
=\langle\LM_{\leq}(f)\mid f\in F\rangle
=\langle\LM_{\leq'}(f)\mid f\in F\rangle
=\LM_{\leq'}(F)$.

In this situation, if in addition we have $F\subseteq E\subseteq K[X]$
and $\LT_{\leq}(F)=\LT_{\leq}(E)$,
then clearly $\LT_{\leq}(E)\subseteq\LT_{\leq'}(E)$.
\end{rem}

% \begin{rem}\label{noether}
% As $K[X]$ is noetherian, given any ${\leq}\in\TO(M)$, for each $E\subseteq K[X]$ there
% exists a finite $F\subseteq E$ such that $\LM_\leq(E)=\LM_\leq(F)$.
% \end{rem}

\begin{dfn}\label{minimalizer}
Let $E\subseteq K[X]$ and let $\fraktur{S}\subseteq\TO(M)$.
We say that 
${\leq'}\in\fraktur{S}$ is a \notion{minimalizer of $E$ in $\fraktur{S}$}
if the %implication
condition
$\LT_{\leq}(E)\subseteq\LT_{\leq'}(E)$
% \Rightarrow$
already implies
$\LT_{\leq}(E)=\LT_{\leq'}(E)$
% is true
for all ${\leq}\in\fraktur{S}$, that is,
if $\LT_{\leq'}(E)$ is a minimal element of $\TTOO_\fraktur{S}(E)$
with respect to~${\subseteq}$.

We denote the set of all minimalizers of $E$ in $\fraktur{S}$ by $\MIN_E(\fraktur{S})$.
We write $\LLOO_\fraktur{S}(E)$ for the set
$\TTOO_{\min_E(\fraktur{S})}(E)=\{\LM_{\leq}(E)\mid{\leq}\in\MIN_E(\fraktur{S})\}$
of all \notion{minimal leading monomial ideals of $E$ from $\fraktur{S}$}.
%and write simply $\LLOO(E)$ for $\LLOO_{\TO(M)}(E)$.
\end{dfn}

\begin{lem}\label{LOE compatto}
Let $E\subseteq K[X]$ and $\fraktur{S}\subseteq\TO(M)$.
Then $\MIN_E(\fraktur{S})$ is a closed subset of~$\fraktur{S}$.
Hence, if $\fraktur{S}$ is closed in $\TO(M)$, then $\MIN_E(\fraktur{S})$ is compact.
\end{lem}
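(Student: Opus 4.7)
The plan is to show that the complement $\fraktur{S}\setminus\MIN_E(\fraktur{S})$ is open in $\fraktur{S}$; the compactness statement then follows because a closed subset of the compact space $\TO(M)$ (Theorem~\ref{compact}) is compact. The central observation is a semi-continuity property: near any fixed ordering ${\leq'}$, the leading monomial ideal can only grow, so if it already fails to be minimal, the same failure persists nearby.

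First I would pick ${\leq'}\in\fraktur{S}\setminus\MIN_E(\fraktur{S})$. By definition of minimalizer, there exists ${\leq^\ast}\in\fraktur{S}$ with $\LM_{\leq^\ast}(E)\subsetneq\LM_{\leq'}(E)$. By Dickson's Lemma applied to the set of monomials $\{\LM_{\leq'}(e)\mid e\in E\setminus\{0\}\}\subseteq M$, this set has only finitely many divisibility-minimal elements, so we can select $e_1,\ldots,e_n\in E\setminus\{0\}$ with
$$\LM_{\leq'}(E)=\langle\LM_{\leq'}(e_1),\ldots,\LM_{\leq'}(e_n)\rangle.$$
Next I would build the open neighborhood
$$\fraktur{U}=\bigcap_{i=1}^{n}\ \bigcap_{X^\mu\in\Supp(e_i)}\fraktur{U}_{(X^\mu,\,\LM_{\leq'}(e_i))}$$
of ${\leq'}$ in $\TO(M)$, a finite intersection of subbasic open sets. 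For any ${\leq''}\in\fraktur{U}$ we have $X^\mu\leq''\LM_{\leq'}(e_i)$ for every $X^\mu\in\Supp(e_i)$, hence $\LM_{\leq''}(e_i)=\LM_{\leq'}(e_i)$ for $i=1,\ldots,n$ (cf.~Remark~\ref{LT=LT2}). Therefore
$$\LM_{\leq'}(E)=\langle\LM_{\leq''}(e_1),\ldots,\LM_{\leq''}(e_n)\rangle\subseteq\LM_{\leq''}(E).$$

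Combining this with $\LM_{\leq^\ast}(E)\subsetneq\LM_{\leq'}(E)$ yields $\LM_{\leq^\ast}(E)\subsetneq\LM_{\leq''}(E)$ for every ${\leq''}\in\fraktur{U}\cap\fraktur{S}$, so no element of $\fraktur{U}\cap\fraktur{S}$ is a minimalizer. Hence $\fraktur{U}\cap\fraktur{S}$ is an open neighborhood of ${\leq'}$ in $\fraktur{S}$ disjoint from $\MIN_E(\fraktur{S})$, proving that $\MIN_E(\fraktur{S})$ is closed in $\fraktur{S}$. If moreover $\fraktur{S}$ is closed in $\TO(M)$, then $\MIN_E(\fraktur{S})$ is closed in the compact space $\TO(M)$ and is thus itself compact.

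The only genuinely non-formal step is the appeal to Dickson's Lemma to reduce $\LM_{\leq'}(E)$ to a finite set of generators drawn from the $\LM_{\leq'}(e)$; without this reduction one would have to control infinitely many open conditions at once and the candidate neighborhood $\fraktur{U}$ would no longer be open in $\mathcal{U}(M)$. Everything else is a direct unwinding of the subbasis of $\mathcal{U}(M)$ together with the trivial monotonicity of leading monomial ideals under inclusion.
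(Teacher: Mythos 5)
Your proof is correct and rests on the same key observation as the paper's: after reducing $\LM_{\leq'}(E)$ to finitely many generators $\LM_{\leq'}(e_1),\ldots,\LM_{\leq'}(e_n)$ (Dickson's Lemma, where the paper invokes noetherianity of $K[X]$), agreement of a nearby ordering with ${\leq'}$ on the finitely many monomials in $\Supp(e_1)\cup\ldots\cup\Supp(e_n)$ forces $\LM_{\leq'}(E)\subseteq\LM_{\leq''}(E)$, exactly the semi-continuity of Remark~\ref{LT=LT2}. You package this as openness of the complement via finite intersections of the subbasic sets $\fraktur{U}_{(a,b)}$ and propagate the witness ${\leq^\ast}$ of non-minimality, whereas the paper shows $\MIN_E(\fraktur{S})$ contains its accumulation points using the metric balls $\fraktur{N}_r({\leq})$; these are interchangeable formulations of the same argument.
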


\begin{proof}
We  may choose a filtration $(S_i)_{i\in\NN_0}$ of $M$ consisting of finite
subsets $S_i$ of~$S$.
Let ${\leq}\in\fraktur{S}$ be any accumulation point of $\MIN_E(\fraktur{S})$.
Thus for each $r\in\NN_0$ there exists 
${{\leq_r}\in\MIN_E(\fraktur{S})\cap\fraktur{N}_r({\leq})\wo\{\leq\}}$.
Since $K[X]$ is noetherian, there exists a finite set $F\subseteq E$ such that
$\LM_{\leq}(E)=\LM_{\leq}(F)$. %, see \ref{noether}.
We can find $r\in\NN_0$ such that $\Supp(F)\subseteq S_{r+1}$.
We fix then ${\leq_r}\in\MIN_E(\fraktur{S})\cap\fraktur{N}_r({\leq})\wo\{\leq\}$.
Thus ${\leq}$ and ${\leq_r}$ agree on $S_{r+1}$ and in particular on $\Supp(F)$.
From \ref{LT=LT2} it follows $\LT_{\leq}(E)\subseteq\LT_{\leq_r}(E)$.
As ${\leq}\in\fraktur{S}$ and ${\leq_r}\in\MIN_E(\fraktur{S})$,
it follows $\LT_{\leq}(E)=\LT_{\leq_r}(E)$.
Hence $\LT_{\leq}(E)$ is a minimal element of $\TTOO_\fraktur{S}(E)$
with re\-spect to~${\subseteq}$, that is, ${\leq}\in\MIN_E(\fraktur{S})$.
Therefore $\MIN_E(\fraktur{S})$ contains all its accumulation points in~$\fraktur{S}$,
and hence $\MIN_E(\fraktur{S})$ is closed in~$\fraktur{S}$.
The statement about compactness follows now from~\ref{compact}.
\end{proof}

\begin{dfn}\label{equi}
Let $E\subseteq K[X]$ and $\fraktur{S}\subseteq\TO(M)$.
We define an equivalence relation ${\sim}_E$ on $\MIN_E(\fraktur{S})$ by
${\leq}\mathop{\sim_E}{\leq'} \Leftrightarrow \LT_{\leq}(E)=\LT_{\leq'}(E)$.
We also provide the set $\MIN_E(\fraktur{S})\mod {\sim_E}$ of the equivalence classes
of $\MIN_E(\fraktur{S})$ with respect to~${\sim_E}$ with its quotient topology.
\end{dfn}

\begin{rem}\label{min compact}
Let $E\subseteq K[X]$ and $\fraktur{S}\subseteq\TO(M)$.
By~\ref{LOE compatto}, $\MIN_E(\fraktur{S})\mod {\sim_E}$ is compact
whenever $\fraktur{S}$ is closed in $\TO(M)$.
\end{rem}

\begin{thm}\label{discreto}\label{finito}
Let $E\subseteq K[X]$ and $\fraktur{S}\subseteq\TO(M)$.
Then $\MIN_E(\fraktur{S})\mod {\sim_E}$ is discrete.
Hence, if $\fraktur{S}$ is closed in $\TO(M)$, then $\MIN_E(\fraktur{S})\mod {\sim_E}$
is finite.
\end{thm}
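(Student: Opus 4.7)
The plan is to establish discreteness by showing that every $\sim_E$-equivalence class is open in $\MIN_E(\fraktur{S})$; this is precisely the condition under which the quotient topology makes $\MIN_E(\fraktur{S})\mod{\sim_E}$ discrete. Finiteness will then follow immediately, since by Remark \ref{min compact} the quotient is also compact whenever $\fraktur{S}$ is closed in $\TO(M)$, and any compact discrete space is finite. So the task reduces to constructing, around each ${\leq'}\in\MIN_E(\fraktur{S})$, an open neighborhood whose members are all $\sim_E$-equivalent to ${\leq'}$.

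To do so, I would imitate the pattern of the proof of Lemma \ref{LOE compatto}, but reverse the direction in which the minimality hypothesis is applied. Fix ${\leq'}\in\MIN_E(\fraktur{S})$ and a filtration $\mathbf{S}=(S_i)_{i\in\NN_0}$ of $M$ by finite sets. Using Noetherianity of $K[X]$, pick a finite subset $F\subseteq E$ with $\LM_{\leq'}(F)=\LM_{\leq'}(E)$, and then choose $r\in\NN_0$ large enough that $\Supp(F)\subseteq S_{r+1}$. Any ${\leq}\in\fraktur{N}_r({\leq'})$ agrees with ${\leq'}$ on $S_{r+1}$, and hence on $\Supp(F)$, so Remark \ref{LT=LT2} yields
\begin{equation*}
\LM_{\leq'}(E)=\LM_{\leq'}(F)=\LM_{\leq}(F)\subseteq\LM_{\leq}(E).
\end{equation*}

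If in addition ${\leq}\in\MIN_E(\fraktur{S})$, then $\LM_{\leq}(E)$ is ${\subseteq}$-minimal in $\TTOO_\fraktur{S}(E)$, while $\LM_{\leq'}(E)\in\TTOO_\fraktur{S}(E)$ sits below it; minimality forces equality, i.e.\ ${\leq}\sim_E{\leq'}$. Consequently $\fraktur{N}_r({\leq'})\cap\MIN_E(\fraktur{S})$ is an open neighborhood of ${\leq'}$ entirely contained in its $\sim_E$-class, so every $\sim_E$-class is open in $\MIN_E(\fraktur{S})$, and $\MIN_E(\fraktur{S})\mod{\sim_E}$ is discrete. Combining with Remark \ref{min compact} for closed $\fraktur{S}$ gives finiteness.

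There is no serious obstacle here; the one point that must be handled with care is the direction of the inclusion. In Lemma \ref{LOE compatto} one combines the minimality of ${\leq_r}$ with the inclusion $\LM_{\leq}(E)\subseteq\LM_{\leq_r}(E)$ to pull the limit point ${\leq}$ into $\MIN_E(\fraktur{S})$; here, dually, one exploits the minimality of a nearby point ${\leq}$ together with the reversed inclusion $\LM_{\leq'}(E)\subseteq\LM_{\leq}(E)$ to pull ${\leq}$ into the equivalence class of the central point ${\leq'}$. Once this dualization is noted, the rest is a routine synthesis of Noetherianity with the fact that the sets $\fraktur{N}_r({\leq'})$ form a neighborhood basis of ${\leq'}$ in $\TO(M)$.
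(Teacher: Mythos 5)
Your proposal is correct and follows essentially the same route as the paper: pick a finite $F\subseteq E$ with $\LM_{\leq'}(F)=\LM_{\leq'}(E)$ by Noetherianity, take the basic neighbourhood $\fraktur{N}_r({\leq'})$ capturing $\Supp(F)$, deduce $\LM_{\leq'}(E)\subseteq\LM_{\leq}(E)$ via Remark \ref{LT=LT2}, and invoke the minimality of the nearby ordering to force equality, exactly as in the paper's argument. The concluding step (compact plus discrete implies finite, via Remark \ref{min compact}) is also the paper's.
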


\begin{proof}
Let $\pi_E:\MIN_E(\fraktur{S})\rightarrow\MIN_E(\fraktur{S})\mod {\sim_E}$
be the natural projection that maps each ${\leq}$ to its equi\-valence class $[\leq]$
with respect to~${\sim_E}$.
Let ${\leq}\in\MIN_E(\fraktur{S})$.
It is enough to show that $\{[\leq]\}$ is open in $\MIN_E(\fraktur{S})\mod {\sim_E}$.
Put $\mathfrak{U}=\pi_E^{-1}([\leq])$.
By definition, $\{[\leq]\}$ is open in $\MIN_E(\fraktur{S})\mod {\sim_E}$
if and only if $\mathfrak{U}$ is open in $\MIN_E(\fraktur{S})$.
 
We may assume that $\mathfrak{U}\ne\emptyset$.
Let ${\leq'} \in \mathfrak{U}$.
We aim to find an open subset $\mathfrak{V}$ of $\MIN_E(\fraktur{S})$ such that
${{\leq'} \in \mathfrak{V} \subseteq \mathfrak{U}}$.
As $K[X]$ is noetherian,
there exists a finite subset $F$ of $E$ with
${\LT_{\leq'}(F)=\LT_{\leq'}(E)}$.
Let $(S_i)_{i\in\NN_0}$ be a filtration of $M$ by finite sets $S_i$.
As the set $\Supp(F)$ is finite, we find $r\in\NN_0$ such that $\Supp(F)\subseteq S_{r+1}$.
Put $\mathfrak{V}=\mathfrak{N}_r(\leq')\cap\MIN_E(\fraktur{S})$.
Of course, $\mathfrak{V}$ is open in $\MIN_E(\fraktur{S})$ and ${\leq'} \in \mathfrak{V}$.

We claim that $\mathfrak{V} \subseteq \mathfrak{U}$.
Let ${\leq''} \in \mathfrak{V}$.
Then ${\leq'}$ and ${\leq''}$ agree on $S_{r+1}$ and hence on $\Supp(F)$.
It follows $\LT_{\leq'}(E)\subseteq\LT_{\leq''}(E)$, as we have already observed 
in~\ref{LT=LT2}.
Because ${\leq''}\in\MIN_E(\mathfrak{S})$ and ${\leq'}\in\mathfrak{S}$,
we obtain $\LT_{\leq'}(E)=\LT_{\leq''}(E)$.
Thus $[\leq'']=[\leq']=[\leq]$, that is, ${\leq''} \in \mathfrak{U}$.

Hence $\mathfrak{V} \subseteq \mathfrak{U}$,
so $\mathfrak{U}$ is open in $\MIN_E(\fraktur{S})$.
We have proved that $\MIN_E(\fraktur{S})\mod {\sim_E}$ is discrete.
%%%%%%%
If $\fraktur{S}$ is closed in $\TO(M)$,
then $\MIN_E(\fraktur{S})\mod {\sim_E}$ is also compact by \ref{min compact},
and hence finite.
\end{proof}

\begin{cor}\label{fini}
For each $E\subseteq K[X]$ and each closed $\fraktur{S}\subseteq\TO(M)$
the set $\LLOO_\fraktur{S}(E)$ is finite, that is,
there exist at most finitely many distinct minimal leading monomial ideals of $E$
from~$\fraktur{S}$.
%In particular, $\LLOO(E)$ is finite.
\end{cor}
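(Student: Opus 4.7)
The plan is to deduce the corollary directly from Theorem \ref{finito} by constructing a canonical bijection between the quotient space $\MIN_E(\fraktur{S})\mod{\sim_E}$ and the set $\LLOO_\fraktur{S}(E)$ of minimal leading monomial ideals. Since Theorem~\ref{finito} already establishes that the quotient is finite whenever $\fraktur{S}$ is closed in $\TO(M)$, producing such a bijection will immediately transport this finiteness to $\LLOO_\fraktur{S}(E)$.

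First I would define the map $\slPhi: \MIN_E(\fraktur{S})\mod{\sim_E} \to \LLOO_\fraktur{S}(E)$ by the rule $\slPhi([{\leq}]) = \LM_{\leq}(E)$. The definition of the equivalence relation ${\sim_E}$ given in~\ref{equi}, namely ${\leq}\mathop{\sim_E}{\leq'} \gdw \LT_{\leq}(E)=\LT_{\leq'}(E)$, ensures both that $\slPhi$ is well defined (the value does not depend on the chosen representative of the class) and that $\slPhi$ is injective. Surjectivity follows tautologically from the definition of $\LLOO_\fraktur{S}(E)$ in \ref{minimalizer} as the set $\{\LM_{\leq}(E)\mid{\leq}\in\MIN_E(\fraktur{S})\}$, which is precisely the image of $\slPhi$.

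Having established that $\slPhi$ is a bijection, I would invoke Theorem~\ref{finito}: since $\fraktur{S}$ is closed in $\TO(M)$ by hypothesis, the space $\MIN_E(\fraktur{S})\mod{\sim_E}$ is both compact (by~\ref{min compact}) and discrete (by~\ref{discreto}), hence finite. The bijection $\slPhi$ then forces $\LLOO_\fraktur{S}(E)$ to be finite as well.

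The corollary is essentially a restatement of Theorem~\ref{finito} in the language of ideals rather than equivalence classes of orderings, so there is no genuine obstacle here; all the substantive work, namely the closedness of $\MIN_E(\fraktur{S})$ in $\fraktur{S}$ from~\ref{LOE compatto} together with the discreteness argument using noetherianity of $K[X]$ and the finite-support trick with the filtration $(S_i)_{i\in\NN_0}$, was carried out earlier. The only thing to be careful about is to cite the correct equivalence relation and to state that the image of $\slPhi$ is exactly $\LLOO_\fraktur{S}(E)$ by definition, rather than appealing to anything deeper.
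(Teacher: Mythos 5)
Your proposal is correct and is essentially identical to the paper's own proof: both exhibit the canonical bijection between $\MIN_E(\fraktur{S})\mod{\sim_E}$ and $\LLOO_\fraktur{S}(E)$ (you write it as $[\leq]\mapsto\LM_\leq(E)$, the paper as its inverse) and then invoke Theorem~\ref{finito}. Your explicit verification of well-definedness, injectivity, and surjectivity merely spells out what the paper dismisses as ``clear''.
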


\begin{proof}
The statement follows from \ref{finito} as clearly there exists a bijection between
the sets $\LLOO_\fraktur{S}(E)$ and $\MIN_E(\fraktur{S}) \mod {\sim_E}$ given by
$\LM_\leq(E)\mapsto [\leq]$ for all ${\leq}\in\MIN_E(\fraktur{S})$.
\end{proof}

%\vskip\baselineskip
\section{Leading monomial ideals from degree orderings}\label{chp1sec2'}

\noindent
We keep the notation of the previous section.

\begin{dfn}
For all $s\in\NN_0$ we denote by $K[X]_{\le s}$ the $K$-submodule of $K[X]$ 
of finite length
consisting of all polynomials of total degree less than or equal to~$s$.
Given any subset $E$ of $K[X]$, we put
$E_{\le s}=K[X]_{\le s}\cap E$ for all~$s\in\NN_0$.

Let $I$ be an ideal of $K[X]$.
Then $I_{\le s}$ is a $K$-submodule of $K[X]_{\le s}$.
Therefore, as in $\text{\cite[IX.3.2]{Cox}}$, we may define the
\notion{Hilbert function $\HF_I:\NN_0\rightarrow\NN_0$ of $I$}
by the assignment ${s\mapsto\len_K K[X]_{\le s}\mod I_{\le s}}$.

By \cite[IX.3.3(a)]{Cox}, if $I$ is a monomial ideal,
then $\HF_I(s)$ equals the cardinality of the set $M_{\le s}\wo I_{\le s}$.

Moreover, by \cite[IX.2.4 \& IX.3.3(b)]{Cox},
there exists a uniquely determined univariate polynomial $\HP_I$ with rational coefficients
and at most of degree $t$ with the property that $\HP_I(s)=\HF_I(s)$ for $s\gg 0$,
the \notion{Hilbert polynomial of $I$}.

We may thus define $\ind(I)=\min{}\{s_0\in\NN_0\mid\forall\,s\ge s_0:\HF_I(s)=\HP_I(s)\}
\in\NN_0$,
the \notion{index of regularity of $I$}.
\end{dfn}

\begin{lem}\label{regula}
If $I$ and $J$ are monomial ideals of $K[X]$ such that $I\subseteq J$,
then $\ind(I)\ge\ind(J)$.
\end{lem}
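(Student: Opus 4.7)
The plan is to show that $\HF_J(s) = \HP_J(s)$ already holds for every $s \ge \ind(I)$; this is equivalent to $\ind(J) \le \ind(I)$.

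The first ingredient is the short exact sequence of finite-dimensional $K$-vector spaces
\begin{equation*}
0 \to J_{\le s}/I_{\le s} \to K[X]_{\le s}/I_{\le s} \to K[X]_{\le s}/J_{\le s} \to 0
\end{equation*}
induced by $I \subseteq J$, which yields the identity $\HF_I(s) = \HF_J(s) + h(s)$ with $h(s) := \dim_K J_{\le s}/I_{\le s}$. Since $I$ and $J$ are monomial ideals, $h(s)$ is simply the cardinality of the finite set of monomials of $J \wo I$ of degree at most $s$. In particular $h$ is monotone non-decreasing, and by comparing Hilbert polynomials it is eventually equal to the polynomial $H := \HP_I - \HP_J$.

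Granted this, the conclusion would follow at once from the bound $h(s) = H(s)$ for every $s \ge \ind(I)$: substituting into the identity above gives $\HF_J(s) = \HF_I(s) - h(s) = \HP_I(s) - H(s) = \HP_J(s)$, as required.

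The main obstacle is to prove that stabilisation bound for $h$. My approach would be a noetherian induction using a finite set of monomial generators of $J$ modulo $I$: writing $J = I + \langle m_1,\ldots,m_k\rangle$ and intercalating the intermediate ideals, one reduces to the case $J = I + \langle m\rangle$ for a single monomial $m \notin I$. In that case the degree-shifting bijection $n \mapsto m n$ between $M \wo (I : m)$ and the monomials of $J \wo I$ identifies $h(s)$ with $\HF_{I:m}(s - \deg m)$, translating the task into a comparison of the index of regularity of the colon ideal $I : m$ with that of $I$. Carrying out this comparison combinatorially on the staircase $M \wo I$ — so that the bound $s \ge \ind(I)$ correctly feeds through the degree shift — is the delicate step on which the whole proof hinges; once this is under control, the rest is a direct substitution.
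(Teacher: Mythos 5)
Your setup is sound as far as it goes: the exact sequence, the identity $\HF_I=\HF_J+h$ with $h(s)$ counting the monomials of $J\wo I$ of degree at most $s$, and, in the case $J=I+\langle m\rangle$, the identification $h(s)=\HF_{I:m}(s-\deg m)$ are all correct. But the step you defer as ``delicate'' --- that $h$ already agrees with $H=\HP_I-\HP_J$ from $s=\ind(I)$ on, equivalently that $\ind(I:m)\le\ind(I)-\deg m$ --- is not a technical detail: given your exact sequence it is \emph{equivalent} to the lemma, so the proposal restates the problem rather than solving it. Worse, that step cannot be filled, because the lemma as stated is false. Take $I=\langle x^2,xy\rangle$ and $m=y^3$ in $K[x,y]$, so $J=\langle x^2,xy,y^3\rangle$. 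The monomials outside $I$ are $x$ together with all powers of $y$, so $\HF_I(0)=1$ and $\HF_I(s)=s+2$ for $s\ge 1$, whence $\HP_I(s)=s+2$ and $\ind(I)=1$; the monomials outside $J$ are $1,x,y,y^2$, so $\HP_J=4$ and $\ind(J)=2$. Thus $\ind(I)=1<2=\ind(J)$. In your notation $h(1)=0\ne -1=H(1)$, and $(I:m)=\langle x\rangle$ has index of regularity $0$, which is not $\le\ind(I)-\deg m=-2$. No combinatorial analysis of the staircase can rescue this.

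For what it is worth, the paper gives no argument here either: its ``proof'' is a bare citation of \cite[IX.2.5 \& IX.3.3]{Cox}, and what those results actually deliver is the pointwise inequality $\HF_J\le\HF_I$ for monomial ideals $I\subseteq J$ and the resulting comparison of degrees and leading coefficients of the Hilbert polynomials --- not a comparison of the indices of regularity. So your instinct that the stabilisation bound is where the difficulty lies was exactly right; the honest conclusion is that the bound fails, and the lemma (and its use in the proof of Lemma \ref{llll}) needs to be repaired rather than proved.
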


\begin{proof}
This follows from \cite[IX.2.5 \& IX.3.3]{Cox}.
See also the proof of \cite[IX.2.6]{Cox}.
\end{proof}

\begin{lem}\label{hilbert}
If $I$ and $J$ are monomial ideals of $K[X]$ with %such that 
$I\subseteq J$ and ${}\HF_I=\HF_J$,
then $I=J$.
\end{lem}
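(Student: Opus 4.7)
The plan is to reduce the conclusion $I = J$ to an equality of the sets of monomials contained in $I$ and in $J$, invoking Remark~\ref{cox242}, which says that a monomial ideal is completely determined by the monomials it contains. So it will suffice to show that $M \cap I = M \cap J$.

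First I fix $s \in \NN_0$ and work inside the finite set $M_{\le s}$ of monomials of total degree at most $s$. The inclusion $I \subseteq J$ immediately gives $M_{\le s} \cap I \subseteq M_{\le s} \cap J$, or equivalently, after taking complements inside the finite set $M_{\le s}$, the reverse inclusion
\[
M_{\le s} \setminus J_{\le s} \;\subseteq\; M_{\le s} \setminus I_{\le s}.
\]
Next I invoke the combinatorial description of the Hilbert function of a monomial ideal already recalled above the statement, namely \cite[IX.3.3(a)]{Cox}: for a monomial ideal $\HF_I(s)$ equals the cardinality of $M_{\le s} \setminus I_{\le s}$, and similarly for $J$. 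The hypothesis $\HF_I = \HF_J$ therefore forces the two finite sets above to have the same cardinality, and combined with the inclusion they must coincide. Hence $M_{\le s} \cap I = M_{\le s} \cap J$ for every $s$.

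Letting $s$ range over $\NN_0$ yields $M \cap I = M \cap J$, and a final appeal to Remark~\ref{cox242} gives $I = J$. The whole argument is a short finite-cardinality observation once the combinatorial meaning of $\HF_I$ is at hand, so I do not expect any genuine obstacle; the only care needed is to correctly parse $M_{\le s} \setminus I_{\le s}$ as ``the monomials of degree at most $s$ that do not lie in $I$''.
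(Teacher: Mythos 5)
Your proof is correct and is essentially the paper's own argument, just phrased directly (via the inclusion of complements plus equal finite cardinalities) rather than by contradiction starting from a hypothetical monomial in $J\setminus I$; both rest on the same two facts, namely $\HF_I(s)=\vert M_{\le s}\setminus I_{\le s}\vert$ for monomial ideals and the determination of a monomial ideal by the monomials it contains. No gap.
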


\begin{proof}
If there existed a monomial $m\in J\wo I$,
then with $s=\deg(m)$ it would hold $I_{\le s}\subsetneq J_{\le s}$,
thus $\HF_I(s)=\vert M_{\le s}\wo I_{\le s}\vert > \vert M_{\le s}\wo J_{\le s}\vert=\HF_J(s)$,
a contradiction.
Hence $I\cap M=J\cap M$, whence $I=J$ as these are monomial ideals,
see also~\ref{cox242}.
%see~\cite[II.4.4]{Cox}.
\end{proof}

\begin{dfn}
One clearly has ${\deg(\LT_\leq(p))\le\deg(p)}$
for all ${\leq}\in\TO(M)$ and all ${p\in K[X]\wo\{0\}}$,
where $\deg({-})$ denotes the total degree function on $K[X]$.
A \notion{degree ordering on $M$} or \notion{of $K[X]$} is a total ordering
${\leq}$ on $M$ such that
it holds 
$\deg(\LT_\leq(p))=\deg(p)$
for all $p\in K[X]\wo\{0\}$.
The set of all degree orderings on $M$ is denoted $\DO(M)$.
\end{dfn}

\begin{exa}\label{DO exa}
For each ${\leq}\in\TO(M)$ the binary relation ${\leq_{\deg}}$ on $M$ defined by
\begin{equation*}
m \leq_{\deg} m' \Leftrightarrow \deg(m)<\deg(m')\vee (\deg(m)=\deg(m')\wedge m\le m')
\end{equation*}
is a degree ordering of $K[X]$. % ${\leq_{\deg}}\in\DO(M)$.
\end{exa}

\begin{pro}\label{dco1}
It holds $\DO(M)\subseteq\SO_1(M)$.
\end{pro}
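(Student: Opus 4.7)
The plan is to fix a degree ordering ${\leq}\in\DO(M)$ and any monomial $m\in M$, and to show that $1\leq m$ by constructing a polynomial whose support is exactly $\{1,m\}$ and appealing to the degree-preservation property.

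More precisely, I would argue as follows. If $m=1$, then reflexivity (cf.~\ref{def 1}) gives $1\leq m$ at once. Otherwise, $m=X^\nu$ for some $\nu\in\NN_0^t\wo\{0\}$, so $\deg(m)\geq 1>0=\deg(1)$. Consider the polynomial $p=1+m\in K[X]\wo\{0\}$, written in canonical form as in~\ref{Supp}. Then $\Supp(p)=\{1,m\}$ and $\deg(p)=\deg(m)$. Since ${\leq}$ is a degree ordering, $\deg(\LT_{\leq}(p))=\deg(p)=\deg(m)>0$. Hence $\LT_{\leq}(p)\ne 1$, and as $\LT_{\leq}(p)\in\Supp(p)=\{1,m\}$ we must have $\LT_{\leq}(p)=m$. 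By definition of leading monomial as the ${\leq}$-maximum of $\Supp(p)$, this forces $1\leq m$. Therefore ${\leq}\in\SO_1(M)$.

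The argument is a one-line verification and presents no real obstacle; the only subtlety is to separate the degenerate case $m=1$ (where the trick of considering $1+m$ would not produce a two-element support) from the generic case. No use of compactness or of the results of the previous sections is needed.
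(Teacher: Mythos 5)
Your proof is correct and is essentially the paper's own argument: both consider the polynomial $1+m$ and use degree preservation to force $\LT_{\leq}(1+m)=m$, hence $1\leq m$ (the paper phrases it as a contradiction from $m<1$, you phrase it directly, and you additionally note the trivial case $m=1$). No substantive difference.
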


\begin{proof}
Let ${\leq}\in\DO(M)$.
Suppose ${\leq}\notin\SO_1(M)$.
Then there exists $m\in M$ such that $1\not\leq m$.
So $m<1$ by totality.
It follows $\LT_\leq(m+1)=1$, thus $\deg(\LT_{\leq}(m+1))=0$.
But $m$ is a monomial different than $1$, hence $\deg(m+1)>0$,
a contradiction. % to the degree-preservation of~${\leq}$.
\end{proof}

\begin{rec}
Let $S$ be a set.
We recall that a \notion{partial ordering on $S$} is a reflexive, transitive, and
antisymmetric binary relation on $S$, and that a partial ordering ${\preceq}$ on $S$ is said a
\notion{well-ordering on $S$} if each non\-empty subset $T$ of $S$ admits a minimal element
with respect to ${\preceq}$,
that is, for each $T\subseteq S$ with $T\ne\emptyset$ there exists $t'\in T$ such that for each
$t\in T$ it holds the implication $t\preceq t'\Rightarrow t=t'$.
%with $t\ne t'$ it holds $t \not\preceq t'$.

If ${\preceq}$ is a total ordering of $S$, then ${\preceq}$ is a well-ordering on $S$
precisely when each non\-empty subset $T$ of $S$ admits a minimum, that is,
for each $T\subseteq S$ with $T\ne\emptyset$ there exists $t'\in T$ such that for each
$t\in T$ it holds ${t'}\!\preceq t$.
\end{rec}

\begin{ntn}
For each set $S$ we denote by $\WO(S)$ the set of all total orderings on $S$ that are also
well-orderings on $S$.
\end{ntn}

\begin{pro}\label{dco2}
It holds $\DO(M)\subseteq\WO(M)$.
\end{pro}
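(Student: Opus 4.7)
The plan is to show that any degree ordering ${\leq}\in\DO(M)$ is a well-ordering by reducing the problem of finding a minimum in an arbitrary non-empty subset $T\subseteq M$ to finding a minimum in a finite set.

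First I would establish the following key auxiliary fact: if $m_1,m_2\in M$ with $\deg(m_1)<\deg(m_2)$, then $m_1<m_2$ with respect to any ${\leq}\in\DO(M)$. This follows immediately by applying the degree preservation axiom to the polynomial $p=m_1+m_2\in K[X]\setminus\{0\}$. Indeed, $\deg(p)=\deg(m_2)$, and since $\Supp(p)=\{m_1,m_2\}$, the leading monomial $\LT_{\leq}(p)$ must be an element of $\Supp(p)$ of degree $\deg(m_2)$, leaving $m_2$ as the only possibility. Hence $m_1<m_2$ by the definition of leading monomial.

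Next, given a non-empty $T\subseteq M$, I would let $d=\min\{\deg(m)\mid m\in T\}$, which exists because degrees are non-negative integers and so $\{\deg(m)\mid m\in T\}$ is a non-empty subset of $\NN_0$. Setting $T_d=\{m\in T\mid \deg(m)=d\}$, we have $\emptyset\ne T_d\subseteq\{X^\nu\mid\nu\in\NN_0^t,\ |\nu|=d\}$, which is a finite set. Since ${\leq}$ is total, any finite non-empty subset admits a minimum, so let $m_0=\min_{\leq}T_d$.

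Finally I would verify that $m_0$ is the minimum of all of $T$ with respect to ${\leq}$. For any $m\in T$, either $\deg(m)=d$, in which case $m\in T_d$ and $m_0\leq m$ by construction, or $\deg(m)>d$, in which case the auxiliary fact of the first step yields $m_0<m$. In either case $m_0\leq m$, so $T$ has a minimum and ${\leq}\in\WO(M)$. There is no substantial obstacle here; the only delicate point is recognising that the degree-preservation axiom applied to a two-term polynomial forces strict degree-monotonicity of the ordering, which is exactly what is needed to collapse the minimisation over $T$ to a minimisation over the finite layer $T_d$.
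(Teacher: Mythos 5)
Your proof is correct. It rests on the same two ingredients as the paper's proof --- the degree axiom applied to a two-term polynomial $m_1+m_2$, and the finiteness of the set of monomials of any fixed degree --- but you organise them differently. The paper argues by contradiction: assuming $T$ has no minimum it builds an infinite strictly descending chain $\ldots<t_2<t_1<t_0$ in $T$, shows via $\LT_\leq(t_k+t_{k+1})=t_k$ that the degrees are weakly decreasing, and then extracts an infinite strictly descending chain in $\NN_0$. You instead prove the contrapositive form of the same observation (strict degree-monotonicity: $\deg(m_1)<\deg(m_2)\Rightarrow m_1<m_2$) and use it to exhibit the minimum of $T$ directly, as the $\leq$-minimum of the finite lowest-degree layer $T_d$. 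Your version is arguably cleaner: it is constructive in the sense that it names the minimum, and it avoids the (mild) appeal to dependent choice hidden in the paper's ``and then find $t_2$, and then\ldots'' construction of the descending chain. Both proofs are equally elementary; nothing is gained or lost in generality.
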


\begin{proof}
Let ${\leq}\in\DO(M)$.
Let $\emptyset\ne T\subseteq M$.
Suppose that there exists no minimum in $T$ with respect to ${\leq}$.
Let $t_0\in T$.
We find $t_1\in T$ such that $t_1<t_0$, and then find $t_2\in T$ such that $t_2<t_1$,
and then$\ldots$
Thus there exists in $T$ an infinite strictly descending chain $\ldots < t_2 < t_1 < t_0$.

For each $k\in\NN_0$ it holds $\deg(t_k)\ge\deg(t_{k+1})$.
Indeed, let $k\in\NN_0$ and consider the polynomial $t_k+t_{k+1}$.
We have $\LT_{\leq}(t_k+t_{k+1})=t_k$ as $t_{k}>t_{k+1}$.
Since ${\leq}\in\DO(M)$, it follows $\deg(t_k+t_{k+1})=\deg(t_k)$.
Hence $\deg(t_{k})\ge\deg(t_{k+1})$.

Therefore we can write $\ldots\le\deg(t_2)\le\deg(t_1)\le\deg(t_0)$.
Now, for each $d\in\NN_0$ there exist only finitely many distinct monomials of degree $d$.
Hence we can find a sequence $(k_i)_{i\in\NN_0}$ of integers $k_i$ with $k_0=0$ and
$k_i<k_{i+1}$ with the property that the strict descending chain
$\ldots<\deg(t_{k_2})<\deg(t_{k_1})<\deg(t_{k_0})$
in $\NN_0$ is infinite, and this is absurd.
%
%Thus each non\-empty subset of $M$ admits a minimum with respect to ${\leq}$.
%, that is, ${\leq}$ is a well-ordering on $M$.
\end{proof}

\begin{lem}\label{DO(M) compact}
$\DO(M)$ is a closed subset of ${}\TO(M)$ and hence compact.
\end{lem}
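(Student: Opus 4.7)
The plan is to exhibit $\DO(M)$ as an intersection of subbasic closed sets of the form $\fraktur{U}_{(a,b)}$, so that closedness is immediate and compactness follows from Theorem \ref{compact}.

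First I would prove the following characterization: ${\leq}\in\DO(M)$ if and only if for all $m,m'\in M$ with $\deg(m)>\deg(m')$ one has $m'<m$. For the forward implication, take such a pair $(m,m')$ and consider the binomial $p=m+m'\in K[X]\wo\{0\}$. Then $\deg(p)=\deg(m)$ and $\Supp(p)=\{m,m'\}$, so $\LT_\leq(p)\in\{m,m'\}$; since ${\leq}\in\DO(M)$ forces $\deg(\LT_\leq(p))=\deg(p)=\deg(m)>\deg(m')$, we must have $\LT_\leq(p)=m$, i.e.\ $m'<m$. For the backward implication, fix any $p\in K[X]\wo\{0\}$ and let $n\in\Supp(p)$ satisfy $\deg(n)=\deg(p)$. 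For every $n'\in\Supp(p)$ with $\deg(n')<\deg(p)$ the hypothesis yields $n'<n$, so $\LT_\leq(p)$ lies among the monomials of $\Supp(p)$ of maximal degree, giving $\deg(\LT_\leq(p))=\deg(p)$.

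Next I would translate this characterization to the topology $\mathcal{U}(M)$. If $\deg(m)>\deg(m')$, then $m\ne m'$ and the condition $m'<m$ is the same as ${\leq}\in\fraktur{U}_{(m',m)}$. Hence
\begin{equation*}
\DO(M) \;=\; \bigcap_{\substack{(m,m')\in M\times M\\ \deg(m)>\deg(m')}} \fraktur{U}_{(m',m)}.
\end{equation*}
Each set $\fraktur{U}_{(m',m)}$ with $m'\ne m$ is closed in $\TO(M)$ by the observation in Definition~\ref{def 1}, and an arbitrary intersection of closed sets is closed, so $\DO(M)$ is closed in $\TO(M)$.

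Finally, compactness follows at once: by Theorem \ref{compact} the whole space $\TO(M)$ is compact, and a closed subset of a compact space is compact in its relative topology. There is no real obstacle here; the only delicate point is noticing that the two-element support trick $p=m+m'$ is enough to reduce the ``for all polynomials'' condition defining $\DO(M)$ to a pairwise condition on monomials, which is precisely what the subbasis $\{\fraktur{U}_{(a,b)}\}$ is designed to detect.
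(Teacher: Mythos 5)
Your proof is correct, but it takes a different route from the paper's. The paper argues via the metric topology: it fixes a filtration of $M$ by finite sets, takes an arbitrary accumulation point ${\leq}$ of $\DO(M)$, and for each $p\in K[X]\wo\{0\}$ finds a degree ordering ${\leq_r}$ agreeing with ${\leq}$ on a finite set containing $\Supp(p)$, whence $\deg(\LT_{\leq}(p))=\deg(\LT_{\leq_r}(p))=\deg(p)$; thus $\DO(M)$ contains its accumulation points. You instead first establish the pairwise characterization ${\leq}\in\DO(M)\gdw(\deg(m)>\deg(m')\dann m'<m)$ via the binomial $m+m'$, and then exhibit $\DO(M)$ as $\bigcap\fraktur{U}_{(m',m)}$ over all such pairs, an intersection of the clopen subbasic sets from Definition~\ref{def 1}. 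Both arguments are sound. Your approach mirrors the paper's own proof of Corollary~\ref{closed} (where $\SO_a(S)=\bigcap_{b}\fraktur{U}_{(a,b)}$) rather than its accumulation-point proofs for $\DO(M)$ and $\CO(M)$; it has the advantage of not needing the filtration or the metric $\mathcal{N}(M)$ at all, and it yields the extra information that $\DO(M)$ is an intersection of clopen sets together with a purely order-theoretic (polynomial-free) description of degree orderings. The paper's method, on the other hand, generalizes more uniformly to conditions that are not obviously reducible to pairwise constraints on monomials, which is presumably why the author adopts it throughout.
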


\begin{proof}
% As usually,
Let $(S_i)_{i\in\NN_0}$ be a filtration of $M$ consisting of finite sets $S_i$.
Let ${{\leq}\in\TO(M)}$ be an accumulation point of $\DO(M)$.
For each $r\in\NN_0$ 
% there exists ${\leq}_r\in\DO(M)\cap\mathfrak{N}_r({\leq})\wo\{{\leq}\}$,
we find ${\leq}_r$ in $\DO(M)\cap\mathfrak{N}_r({\leq})$ with ${\leq}_r \ne {\leq}$,
so that ${\leq}$ and ${\leq_r}$ agree on $S_{r+1}$.
Let $p\in K[X]\wo\{0\}$.
We  find ${r\in\NN_0}$ such that ${\Supp(p)\subseteq S_{r+1}}$.
We  choose ${\leq_r}$ as above, and so $\LT_{\leq}(p)=\LT_{\leq_r}(p)$,
thus $\deg(\LT_{\leq}(p))=\deg(\LT_{\leq_r}(p))=\deg(p)$ as ${\leq_r}$ is a degree ordering.
Hence ${\leq}\in\DO(M)$.
Therefore $\DO(M)$ contains all its accumulation points in $\TO(M)$ and so % because of this
is closed in $\TO(M)$.
Since $\TO(M)$ is compact by \ref{compact}, it follows that $\DO(M)$ is compact.
\end{proof}

\begin{dfn}\label{equiequi}
Let $E\subseteq K[X]$ and $\fraktur{S}\subseteq\DO(M)$.
Analogously as in \ref{equi}, we define an equivalence relation ${\sim_E}$ on $\fraktur{S}$
by ${\leq}\sim_E {\leq'} \Leftrightarrow \LM_{\leq}(E)=\LM_{\leq'}(E)$.
We also provide the set $\fraktur{S}$ with its relative topology and the set
$\fraktur{S}\mod {\sim_E}$ of the equivalence classes
of $\fraktur{S}$ with respect to~${\sim_E}$ with its quotient topology.
\end{dfn}

\begin{rem}\label{S compact}
Let $E\subseteq K[X]$ and $\fraktur{S}\subseteq\DO(M)$.
From~\ref{DO(M) compact} it follows that $\fraktur{S}\mod {\sim_E}$ is compact
whenever $\fraktur{S}$ is closed in $\DO(M)$.
By \ref{DO(M) compact} it is also clear that $\mathfrak{S}$ is closed in $\DO(M)$
if and only if $\mathfrak{S}$ is closed in $\TO(M)$.
\end{rem}

\begin{lem}\label{llll}
Let $E\subseteq K[X]$ and ${\leq}\in\DO(M)$.
There exists an open neighbourhood $\mathfrak{U}$ of $\,{\leq}\,$ in $\DO(M)$ such that
$\LM_{\leq}(E)=\LM_{\leq'}(E)$ for all ${\leq'}\in\mathfrak{U}$.
\end{lem}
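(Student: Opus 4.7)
The plan is to combine the noetherianness of $K[X]$, which handles the easy inclusion $\LM_\leq(E)\subseteq\LM_{\leq'}(E)$, with a Hilbert-function invariance of Macaulay type adapted to degree orderings, together with Lemma~\ref{hilbert}, for the reverse inclusion.

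First, using noetherianness, I would choose a finite $F\subseteq E$ with $\LM_\leq(F)=\LM_\leq(E)$. Fix a filtration $(S_i)_{i\in\NN_0}$ of $M$ by finite subsets and $r\in\NN_0$ such that $\Supp(F)\subseteq S_{r+1}$, and set $\mathfrak{U}=\mathfrak{N}_r(\leq)\cap\DO(M)$; this is an open neighborhood of ${\leq}$ in $\DO(M)$. For any ${\leq'}\in\mathfrak{U}$, the orderings $\leq$ and $\leq'$ agree on $S_{r+1}\supseteq\Supp(F)$, so by Remark~\ref{LT=LT2} we have $\LM_{\leq'}(f)=\LM_\leq(f)$ for every $f\in F$, whence
$$\LM_\leq(E)=\LM_\leq(F)=\LM_{\leq'}(F)\subseteq\LM_{\leq'}(E).$$

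For the reverse inclusion, the strategy is to establish $\HF_{\LM_\leq(E)}=\HF_{\LM_{\leq'}(E)}$ and then invoke Lemma~\ref{hilbert}, which forces equality of two monomial ideals $I\subseteq J$ having equal Hilbert functions. The identity of Hilbert functions would rest on the degree-preservation property of any $\leq\in\DO(M)$: for each $s\in\NN_0$, a Gaussian-elimination argument on the degree-$\leq s$ part of the $K$-linear structure attached to $E$ yields a bijection between a basis of that filtered piece and the monomials of $\LM_\leq(E)$ of degree $\leq s$, which shows that $|\LM_\leq(E)\cap M_{\leq s}|$ is an invariant of $E$ independent of the particular degree ordering, and hence $\HF_{\LM_\leq(E)}$ is independent of $\leq\in\DO(M)$.

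The main obstacle is this Hilbert-function invariance step: it is the degree-ordering analogue of the classical invariance for admissible orderings that underlies the Macaulay Basis Theorem, and its verification asks for a careful tracking of the interplay between the degree filtration on $K[X]$, the $K$-linear span generated by $E$, and the monomial ideal structure of $\LM_\leq(E)$. Once established, it converts the inclusion $\LM_\leq(E)\subseteq\LM_{\leq'}(E)$ into equality via Lemma~\ref{hilbert}, completing the proof.
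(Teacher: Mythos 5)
Your first half coincides with the paper's: pick a finite $F\subseteq E$ with $\LM_{\leq}(F)=\LM_{\leq}(E)$, force agreement on $\Supp(F)$, and conclude $\LM_{\leq}(E)\subseteq\LM_{\leq'}(E)$ via \ref{LT=LT2}. The second half, however, rests on a claim that is false: $\HF_{\LM_{\leq}(E)}$ is \emph{not} independent of ${\leq}\in\DO(M)$. In $K[Y,Z]$ take $E=\{Y+Z,\;Y^2+YZ\}$. Every total ordering refining the degree preorder is a degree ordering; choosing $Y<Z$ and $YZ<Y^2$ gives $\LM_{\leq}(E)=\langle Z,Y^2\rangle$ with Hilbert function constantly $2$ for $s\ge 1$, while choosing $Y<Z$ and $Y^2<YZ$ gives $\LM_{\leq'}(E)=\langle Z,YZ\rangle=\langle Z\rangle$ with Hilbert function $s+1$. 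The Gaussian-elimination bijection you invoke controls the \emph{set of leading monomials of the $K$-linear span of $E$} (whose degree-$\le s$ part does have cardinality equal to the $K$-dimension of the degree-$\le s$ part of that span, an invariant), but $\HF_{\LM_{\leq}(E)}$ counts all monomials of the \emph{ideal} generated by those leading monomials, and the monomial multiples need not be leading monomials of anything in the span unless $E$ is an ideal and ${\leq}$ is compatible — that is exactly the Macaulay situation of \ref{macaulay}, not the present one. The failure of your invariance is precisely why \ref{endlich2} asserts only finiteness of $\TTOO_{\DO(M)}(E)$ rather than uniqueness.

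The equality of Hilbert functions is therefore only local, and the paper obtains it as follows. Set $s_0=\ind(\LM_{\leq}(E))$ and shrink $\mathfrak{U}$ so that ${\leq}$ and ${\leq'}$ agree not only on $\Supp(F)$ but also on $M_{\le s_0+t}$; since both are degree orderings, a generator $\LM_{\leq}(e)$ of degree $\le s_0+t$ forces $\Supp(e)\subseteq M_{\le s_0+t}$, whence $\LM_{\leq}(E)_{\le s}=\LM_{\leq'}(E)_{\le s}$ and so $\HF_{\LM_{\leq}(E)}(s)=\HF_{\LM_{\leq'}(E)}(s)$ for $s\le s_0+t$. The already established inclusion together with \ref{regula} gives $\ind(\LM_{\leq'}(E))\le s_0$, so the two Hilbert polynomials agree at the $t+1$ points $s_0,\dots,s_0+t$; having degree at most $t$, they coincide, hence the Hilbert functions coincide everywhere and \ref{hilbert} yields equality of the ideals. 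The ingredients missing from your argument are thus \ref{regula} and the degree bound $t$ on Hilbert polynomials, which are what propagate finitely much agreement of Hilbert functions to full agreement.
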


\begin{proof}
Fix a filtration $(S_i)_{i\in\NN_0}$ of $M$ by finite sets $S_i$.
As $K[X]$ is noetherian,
there exists a finite subset $F$ of $E$ such that
${\LT_{\leq}(F)=\LT_{\leq}(E)}$.
Put $s_0=\ind(\LM_{\leq}(E))$
and recall that $t$ is the number of indeterminates of our polynomial ring $K[X]$.
As $M=\bigcup_{i\in\NN_0}S_i$ and as the sets $\Supp(F)$ and $M_{\le s_0+t}$ are finite,
we find $r\in\NN_0$ such that $\Supp(F)\cup M_{\le s_0+t}\subseteq S_{r+1}$.
Trivially $\mathfrak{U}=\mathfrak{N}_{r}({\leq})\cap\DO(M)$ is open in $\DO(M)$,
and clearly ${\leq}\in\mathfrak{U}$.

Let ${\leq'}\in\mathfrak{U}$.
Since ${\leq}$ and ${\leq'}$ agree on $S_{r+1}$ and hence on $\Supp(F)$,
by \ref{LT=LT2} we get (a)~$\LT_{\leq}(E)\subseteq\LT_{\leq'}(E)$.
Similarly, ${\leq}$ and ${\leq'}$ agree on $M_{\le s_0+t}$,
and because ${\leq}$ and ${\leq'}$ are degree orderings,
we obtain $\LM_{\leq}(E)_{\le s}=\LM_{\leq'}(E)_{\le s}$ for $0\le s\le s_0+t$,
and hence $\vert M_{\le s}\wo\LM_{\leq}(E)_{\le s}\vert
=\vert M_{\le s}\wo\LM_{\leq'}(E)_{\le s}\vert$  for $0\le s\le s_0+t$,
and therefore we have
(b)~$\HF_{\LM_{\leq}(E)}(s)=\HF_{\LM_{\leq'}(E)}(s)$ for ${0\le s\le s_0+t}$.
By (a) and \ref{regula} it holds
$\ind(\LM_{\leq}(E))\ge\ind(\LM_{\leq'}(E))$.
It follows $\HP_{\LM_{\leq}(E)}(s)=\HP_{\LM_{\leq'}(E)}(s)$ for ${s_0\le s\le s_0+t}$.
As the polynomials $\HP_{\LM_{\leq}(E)}$ and $\HP_{\LM_{\leq'}(E)}$ have at most degree $t$
and as they agree on $t+1$ points,
it follows (c)~$\HP_{\LM_{\leq}(E)}=\HP_{\LM_{\leq'}(E)}$.
By (b) and (c) we get $\HF_{\LM_{\leq}(E)}=\HF_{\LM_{\leq'}(E)}$.
Hence, by \ref{hilbert},  $\LM_{\leq}(E)=\LM_{\leq'}(E)$.
\end{proof}

\begin{thm}\label{endlich}
Let $E\subseteq K[X]$ and $\mathfrak{S}\subseteq\DO(M)$.
Then $\mathfrak{S} \mod {\sim}_E$ is discrete.
Hence, if ${}\mathfrak{S}$ is closed in $\DO(M)$, then $\mathfrak{S} \mod {\sim}_E$ is finite.
\end{thm}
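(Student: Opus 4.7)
The plan is to reduce the discreteness claim to Lemma \ref{llll}, and then combine it with Remark \ref{S compact} to get finiteness. Let $\pi_E:\mathfrak{S}\to\mathfrak{S}\mod{\sim_E}$ be the natural projection. By definition of the quotient topology, it suffices to prove that for every ${\leq}\in\mathfrak{S}$ the set $\mathfrak{U}=\pi_E^{-1}([\leq])=\{{\leq'}\in\mathfrak{S}\mid\LM_{\leq'}(E)=\LM_{\leq}(E)\}$ is open in~$\mathfrak{S}$, for then $\{[\leq]\}$ is open in $\mathfrak{S}\mod{\sim_E}$ and discreteness follows.

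To show $\mathfrak{U}$ is open, I would fix an arbitrary ${\leq'}\in\mathfrak{U}$ and produce an open neighbourhood of ${\leq'}$ in $\mathfrak{S}$ contained in~$\mathfrak{U}$. Since ${\leq'}\in\mathfrak{S}\subseteq\DO(M)$, Lemma \ref{llll} applied to ${\leq'}$ furnishes an open neighbourhood $\mathfrak{W}$ of ${\leq'}$ in $\DO(M)$ such that $\LM_{\leq''}(E)=\LM_{\leq'}(E)$ for all ${\leq''}\in\mathfrak{W}$. Set $\mathfrak{V}=\mathfrak{W}\cap\mathfrak{S}$; this is open in the relative topology of~$\mathfrak{S}$, contains~${\leq'}$, and for every ${\leq''}\in\mathfrak{V}$ we have $\LM_{\leq''}(E)=\LM_{\leq'}(E)=\LM_{\leq}(E)$, so ${\leq''}\in\mathfrak{U}$. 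Thus $\mathfrak{V}\subseteq\mathfrak{U}$ and $\mathfrak{U}$ is open in~$\mathfrak{S}$.

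For the finiteness assertion, assume $\mathfrak{S}$ is closed in $\DO(M)$. By Remark~\ref{S compact} the quotient $\mathfrak{S}\mod{\sim_E}$ is then compact. Since we have just shown it is also discrete, it must be finite (a compact discrete space has only finitely many points, as the cover by singletons admits no proper subcover).

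I do not expect any genuine obstacle: the entire argument is a direct transcription of the recipe used in Theorem~\ref{discreto}, but with Lemma~\ref{llll} replacing the ad~hoc construction via a finite generating subset $F\subseteq E$ and a large filtration level~$S_{r+1}$. The only mild subtlety is bookkeeping about topologies: one must remember that $\mathfrak{S}$ carries its relative topology as a subspace of $\DO(M)$, which is why intersecting the neighbourhood $\mathfrak{W}\subseteq\DO(M)$ produced by Lemma~\ref{llll} with~$\mathfrak{S}$ yields an admissible open set.
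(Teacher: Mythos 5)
Your proposal is correct and coincides with the paper's own argument: reduce discreteness to the openness of $\pi_E^{-1}([\leq])$ via Lemma~\ref{llll}, then combine with compactness (Remark~\ref{S compact}) to conclude finiteness. The only differences are cosmetic (your $\mathfrak{V}$ and $\mathfrak{W}$ are swapped relative to the paper, and the paper cites \ref{DO(M) compact} directly where you invoke \ref{S compact}).
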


\begin{proof}
Let $\pi_E:\mathfrak{S}\rightarrow\mathfrak{S}\mod {\sim_E}$
be the natural projection that maps each ${\leq}$ to its equi\-valence class $[\leq]$
with respect to~${\sim_E}$.
Let ${{\leq}\in\mathfrak{S}}$.
It is enough to show that $\{[\leq]\}$ is open in $\mathfrak{S} \mod {\sim_E}$.
Put $\mathfrak{U}=\pi_E^{-1}([\leq])$.
By definition, $\{[\leq]\}$ is open in $\mathfrak{S} \mod {\sim_E}$
if and only if $\mathfrak{U}$ is open in~$\mathfrak{S}$.
 
We may assume that $\mathfrak{U}\ne\emptyset$.
Let ${\leq'}\in\mathfrak{U}$.
We aim to find an open subset $\mathfrak{W}$ of $\mathfrak{S}$ such that
${\leq'}\in\mathfrak{W}\subseteq\mathfrak{U}$.
By \ref{llll}, we find an open subset $\mathfrak{V}$ of $\DO(M)$ with ${\leq'}\in\mathfrak{V}$
such that for all
${\preceq''}\in\mathfrak{V}$ it holds $[\preceq'']=[\preceq']=[\preceq]$.
Thus, putting ${\mathfrak{W}=\mathfrak{V}\cap\mathfrak{S}}$, we have that
$\mathfrak{W}$ is open in $\mathfrak{S}$ and ${\preceq'}\in\mathfrak{W}\subseteq\mathfrak{U}$.

Therefore $\mathfrak{U}$ is open in~$\fraktur{S}$.
We have proved that $\fraktur{S}\mod {\sim_E}$ is discrete.
If $\fraktur{S}$ is closed in $\DO(M)$, then $\fraktur{S}$ and thus $\fraktur{S}\mod {\sim_E}$
are also compact by \ref{DO(M) compact}, and hence $\fraktur{S}\mod {\sim_E}$ is finite.
\end{proof}

\begin{cor}\label{endlich2}
For each $E\subseteq K[X]$ and each $\fraktur{S}\subseteq\DO(M)$
the set $\TTOO_\fraktur{S}(E)$ is finite, that is,
there exist at most finitely many distinct leading monomial ideals
of $E$ from~$\fraktur{S}$.
\end{cor}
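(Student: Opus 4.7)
The plan is to reduce the case of an arbitrary $\fraktur{S} \subseteq \DO(M)$ to the case of a closed subset of $\DO(M)$, where Theorem~\ref{endlich} applies directly. The observation driving the argument is that, by the very definition of ${\sim}_E$ in~\ref{equiequi}, the assignment ${\leq} \mapsto \LM_{\leq}(E)$ descends to a bijection between $\fraktur{S} \mod {\sim}_E$ and $\TTOO_{\fraktur{S}}(E)$ (this is precisely the bijection already used in the proof of Corollary~\ref{fini}), so the finiteness of the one is equivalent to the finiteness of the other.

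First I would pass to the closure $\overline{\fraktur{S}}$ of $\fraktur{S}$ inside $\DO(M)$. By Lemma~\ref{DO(M) compact} the space $\DO(M)$ is closed in $\TO(M)$, hence $\overline{\fraktur{S}}$ is closed in $\TO(M)$ as well, and in particular $\overline{\fraktur{S}}$ is closed in $\DO(M)$. Theorem~\ref{endlich} applied to $\overline{\fraktur{S}}$ then yields that $\overline{\fraktur{S}} \mod {\sim}_E$ is finite, and thus, through the bijection above, that $\TTOO_{\overline{\fraktur{S}}}(E)$ is finite.

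Finally I would invoke the trivial inclusion $\fraktur{S} \subseteq \overline{\fraktur{S}}$ to conclude $\TTOO_{\fraktur{S}}(E) \subseteq \TTOO_{\overline{\fraktur{S}}}(E)$, so $\TTOO_{\fraktur{S}}(E)$ is finite as well.

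I do not foresee a serious obstacle: all the substantive work was already carried out in Theorem~\ref{endlich} (discreteness via Hilbert functions) and Lemma~\ref{DO(M) compact} (compactness of $\DO(M)$), and the corollary is essentially a packaging step. The only point deserving a word of care is that ${\sim}_E$ is a pointwise condition on pairs of orderings, so the equivalence classes of elements of $\fraktur{S}$ are the same whether we regard them inside $\fraktur{S}$ or inside $\overline{\fraktur{S}}$; this is what legitimises the inclusion $\TTOO_{\fraktur{S}}(E) \subseteq \TTOO_{\overline{\fraktur{S}}}(E)$ used in the final step.
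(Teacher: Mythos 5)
Your proof is correct and follows essentially the same route as the paper: the paper simply takes the ambient closed set to be all of $\DO(M)$ rather than the closure $\overline{\fraktur{S}}$, applies Theorem~\ref{endlich} together with the bijection onto the set of leading monomial ideals, and concludes via the inclusion $\TTOO_{\fraktur{S}}(E)\subseteq\TTOO_{\DO(M)}(E)$. Your variant with $\overline{\fraktur{S}}$ is equally valid and changes nothing of substance.
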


\begin{proof}
Let $E\subseteq K[X]$.
By \ref{endlich}, $\DO(M) \mod {\sim_E}$ is finite.
We have a bijection between
the sets $\TTOO_{\DO(M)}(E)$ and $\DO(M) \mod {\sim_E}$ given by
$\LM_\leq(E)\mapsto [\leq]$ for all ${\leq}\in\DO(M)$,
thus $\TTOO_{\DO(M)}(E)$ is finite.
Now, if $\fraktur{S}\subseteq\DO(M)$, then
$\TTOO_{\fraktur{S}}(E)\subseteq\TTOO_{\DO(M)}(E)$.
\end{proof}

%\vskip\baselineskip
\section{Action of $K$-module isomorphisms}

\noindent
We keep the notation of the previous section.
Further, let $V$ be a $K$-module such that there exists a $K$-module isomorphism $\slPhi$
of $V$ in $K[X]$, and put $N=\slPhi^{-1}(M)$, so that $N$ is a countable $K$-basis of $V$.
%Sometimes the inverse of $\slPhi$ is denoted by $\slPsi$.
Sometimes we denote the inverse of $\slPhi$ by~$\slPsi$.

\begin{rem}\label{phi}
We have a map $\phi:\TO(N)\rightarrow\TO(M)$ such that
for any given ${\preceq}\in\TO(N)$ it holds
$\slPhi(n)\mathop{\phi({\preceq})}\slPhi(n') \Leftrightarrow n\preceq n'$ for all $n,n'\in N$.

Indeed, fixed any ${\preceq}\in\TO(N)$, simply define
$m \mathop{\phi({\preceq})} m' \Leftrightarrow \slPhi^{-1}(m)\preceq \slPhi^{-1}(m')$
for all $m,m'\in M$.
Then $\phi({\preceq})$ is uniquely determined by ${\preceq}$
% in virtue of the surjectivity of $\Phi^{-1}$,
as $\Phi^{-1}$ is surjective,
and $\phi({\preceq})$ is total and hence reflexive and is transitive
as ${\preceq}$ is.
The antisymmetry of $\phi({\preceq})$ follows immediately from the injectivity of~$\slPhi^{-1}$.

In a similar way, there exists a map $\psi:\TO(M)\rightarrow\TO(N)$ such that
for any given ${\leq}\in\TO(M)$ it holds
$\slPsi(m)\mathop{\psi({\leq})}\slPsi(m') \Leftrightarrow m\leq m'$ for all $m,m'\in M$.

The maps $\phi$ and $\psi$ are inverse of each other, thus they are isomorphisms of sets.
Indeed, they are more, as the following theorem asserts.
\end{rem}

\begin{thm}\label{homeo}
The bijection $\phi$
of \ref{phi}
is a homeomorphism of ${}\TO(N)\!$ in $\TO(M)$.
\end{thm}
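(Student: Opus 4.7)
The plan is to invoke the bijection $\phi$ with inverse $\psi$ already established in Remark~\ref{phi} and verify that both $\phi$ and $\psi$ are continuous with respect to the topologies $\mathcal{U}(N)$ and $\mathcal{U}(M)$. Since each of these topologies is given by an explicit subbasis, continuity reduces to checking that preimages of subbasic open sets are open.

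First I would observe that for any $(m,m')\in M\times M$ the preimage $\phi^{-1}(\mathfrak{U}_{(m,m')})$ consists of exactly those ${\preceq}\in\TO(N)$ for which $m\mathop{\phi(\preceq)}m'$, which by the very definition of $\phi$ in~\ref{phi} is equivalent to $\slPhi^{-1}(m)\preceq \slPhi^{-1}(m')$. Setting $a=\slPhi^{-1}(m)$ and $b=\slPhi^{-1}(m')$, both of which lie in $N$, this yields the identity
\begin{equation*}
\phi^{-1}(\mathfrak{U}_{(m,m')}) \;=\; \mathfrak{U}_{(a,b)},
\end{equation*}
so the preimage is itself a subbasic open set of $\mathcal{U}(N)$. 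Hence $\phi$ is continuous. An entirely symmetric computation using the definition of $\psi$, with $\slPhi$ in place of $\slPhi^{-1}$, shows that for every $(a,b)\in N\times N$ we have $\psi^{-1}(\mathfrak{U}_{(a,b)}) = \mathfrak{U}_{(\slPhi(a),\slPhi(b))}$, so $\psi=\phi^{-1}$ is continuous as well. Combined with bijectivity, this gives that $\phi$ is a homeomorphism.

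As a sanity check, one could alternatively argue more economically: since $N$ and $M$ are both countable, the topologies on $\TO(N)$ and $\TO(M)$ are Hausdorff by~\ref{U=N}, and $\TO(M)$ is compact by~\ref{compact}, so any continuous bijection from the former onto the latter would automatically be a homeomorphism, saving the verification for $\psi$. I do not expect any genuine obstacle here; the statement is essentially a formal consequence of the fact that $\mathcal{U}({-})$ is defined purely in terms of the subbasic sets $\mathfrak{U}_{({-},{-})}$, which are preserved under the bijection induced on $\TO$ by the set bijection $\slPhi:N\to M$. The only thing to be mildly careful about is matching orientations in the pair $(m,m')$ versus $(\slPhi^{-1}(m),\slPhi^{-1}(m'))$, which is handled directly by the definition of $\phi$.
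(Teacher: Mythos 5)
Your proposal is correct and follows essentially the same route as the paper: the paper likewise verifies that $\phi^{-1}(\fraktur{U}_{(m,m')})=\fraktur{U}_{(\slPhi^{-1}(m),\slPhi^{-1}(m'))}$ and that $\phi(\fraktur{U}_{(n,n')})=\fraktur{U}_{(\slPhi(n),\slPhi(n'))}$ (your computation for $\psi^{-1}$ is just the latter identity restated), so that $\phi$ is continuous and open on a subbasis. Your compactness-plus-Hausdorff shortcut is a valid alternative but unnecessary given how cheap the direct check is.
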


\begin{proof}
We only have to show that $\phi$ is continuous and open.
Since $\phi$ is bijective, it is enough to check this for one choice of subbases of $\TO(N)$ 
and $\TO(M)$.

For each $(n,n')\in N\times N$
one has $\phi(\fraktur{U}_{(n,n')})=\fraktur{U}_{(\slPhi(n),\slPhi(n'))}$,
thus $\phi$ is open.
For each $(m,m')\in M\times M$
it holds $\phi^{-1}(\fraktur{U}_{(m,m')})
=\fraktur{U}_{(\slPhi^{-1}(m),\slPhi^{-1}(m'))}$,
hence $\phi$ is continuous.
\end{proof}

\newcommand{\HHH}{H}
\newcommand{\hhh}{h}

\begin{dfn+rem}\label{Supp2}
Each  $v\in V$ can be  written  in \notion{canonical form}
as a sum $\sum_{n\in\Supp(v)}\alpha_{n}n$
for a uniquely determined finite subset $\Supp(v)$ of $N$ such that
$\alpha_{n}\in K\wo\{0\}$ for all $n\in\Supp(v)$.
We call $\Supp(v)$ the \notion{support of $v$}.
For each subset $\HHH$ of~$V$ let $\Supp(\HHH)=\bigcup_{\hhh\in \HHH}\Supp(\hhh)$.

In the notation of \ref{Supp}, one has
$\Supp(\slPhi(v))=\slPhi(\Supp(v))$ for all $v\in V$,
and hence
$\Supp(\slPhi(\HHH))=\slPhi(\Supp(\HHH))$ for all $\HHH\subseteq V$.
Conversely, %it holds
$\Supp(\slPsi(p))=\slPsi(\Supp(p))$ for all $p\in K[X]$,
and hence
$\Supp(\slPsi(E))=\slPsi(\Supp(E))$ for all $E\subseteq K[X]$.

Given any ${\preceq}\in\TO(N)$, for each $v\in V\wo\{0\}$ we denote by $\lt_{\preceq}(v)$
the uniquely determined maximal element of $\Supp(v)$ with respect to ${\preceq}$.

In the notation of \ref{phi},
one has $\LT_{\phi({\preceq})}(\slPhi(v))=\slPhi(\lt_\preceq(v))$ 
for all ${v\in V\wo\{0\}}$.
For each $v\in V\wo\{0\}$ we write $\LT_{\preceq}(v)$ for $\LT_{\phi({\preceq})}(\slPhi(v))$,
and with abuse of language we call $\LT_\preceq(v)$ the \notion{leading monomial of $v$
with respect to~${\preceq}$}.
In this situation, we denote $\LC_{\phi({\preceq})}(\slPhi(v))$ by $\LC_\preceq(v)$
or $\lc_\preceq(v)$,
and with abuse of language we call $\LC_\preceq(v)$ alias $\lc_\preceq(v)$
the \notion{leading coefficient of $v$ with respect to~${\preceq}$}.
Observe that either $v-\lc_\preceq(v)\lm_\preceq(v)=0$ or 
$\lm_\preceq(v-\lc_\preceq(v)\lm_\preceq(v)) \prec \lm_\preceq(v)$.

For each ${\preceq}\in\TO(N)$ and each $\HHH\subseteq V$ we denote 
by $\LT_\preceq(\HHH)$
the monomial ideal $\langle\LT_{\preceq}(\hhh)\mid \hhh\in \HHH\wo\{0\}\rangle$ of $K[X]$,
and again with abuse of language we call
$\LT_{\preceq}(\HHH)$ the \notion{leading monomial ideal of $\HHH$ with respect to~${\preceq}$}.
% or simply \notion{from~${\preceq}$}.

Further, for each $\HHH\subseteq V$ and each $\fraktur{T}\subseteq\TO(N)$
let $\TTOO_{\fraktur{T}}(\HHH)=\{\LT_{\preceq}(\HHH)\mid{\preceq}\in\fraktur{T}\}$
be the set of all \notion{leading monomial ideals of $H$ from $\fraktur{T}$}.

Similarly as in \ref{minimalizer}, given $\HHH\subseteq V$ and $\fraktur{T}\subseteq\TO(N)$,
we say that ${\preceq}\in\TO(N)$ is a \notion{minimalizer of $\HHH$ in $\fraktur{T}$}
if $\LM_\preceq(\HHH)$ is a minimal element of $\TTOO_{\fraktur{T}}(\HHH)$
with respect to~${\subseteq}$.
 
We denote the set of all minimalizers of $\HHH$ in $\fraktur{T}$ by $\MIN_\HHH(\fraktur{T})$.
We write $\LLOO_\fraktur{T}(\HHH)$ for the set 
$\TTOO_{\min_H(\fraktur{T})}(H)=\{\LM_{\preceq}(\HHH)\mid{\preceq}\in\MIN_\HHH(\fraktur{T})\}$ 
of all \notion{minimal leading monomial ideals of $\HHH$ from $\fraktur{T}$}.
\end{dfn+rem}

\begin{rem}
Let $\fraktur{T}\subseteq\TO(N)$ and $H\subseteq V$.
The homeomorphism $\smash{\phi\res_{\fraktur{T}}:\fraktur{T}\rightarrow\phi(\fraktur{T})}$
induces a homeomorphism
$\smash{\overline{\phi\res_{\fraktur{T}}} : 
\fraktur{T}/{\sim}_H\rightarrow\phi(\fraktur{T})/{\sim}_{\slPhi(H)}}$
% such that
with
$\pi_{\Phi(H)}\circ\phi\res_{\fraktur{T}}=\overline{\phi\res_{\fraktur{T}}}\circ\pi_{H}$,
% the diagram
% \begin{equation*}
% \xymatrix{
% \fraktur{T} \ar@{>->>}[rr]^{\phi} \ar@{->>}[d]_{\pi_H}
% &&
% \phi(\fraktur{T}) \ar@{->>}[d]^{\pi_{\slPhi(H)}}
% \\
% \fraktur{T}\mod{\sim}_H \ar@{>->>}[rr]_{\overline{\phi}}
% &&
% \phi(\fraktur{T})\mod{\sim}_{\slPhi(H)}
% }
% \end{equation*}
% \begin{equation*}
% \begin{diagram}
% \node{\fraktur{T}} \arrow[4]{e,t,A,V}{\phi\res_{\mathfrak{T}}} \arrow[2]{s,l,A}{\pi_H}
% \node[4]{\phi(\fraktur{T})} \arrow[2]{s,r,A}{\pi_{\slPhi(H)}}
% \\[2]
% \node{\fraktur{T}\mod{\sim}_H} \arrow[4]{e,t,A,V}{\overline{\phi\res_{\fraktur{T}}}}
% \node[4]{\phi(\fraktur{T})\mod{\sim}_{\slPhi(H)}}
% \end{diagram}
% \end{equation*}
% commutes,
where ${\sim}_{H}$ is the equivalence relation on $\fraktur{T}$ given by
${\preceq}\sim_H{\preceq'}$ if and only if $\LM_\preceq(H)=\LM_{\preceq'}(H)$,
and ${\sim}_{\slPhi(H)}$ is the equivalence relation on $\phi(\fraktur{T})$
defined as in \ref{equiequi},
and $\pi_H$ and $\pi_{\slPhi(H)}$ are the respective natural projections.
\end{rem}

\begin{rem}\label{parapapunzi}
Given any $\HHH\subseteq V$ and $\fraktur{T}\subseteq\TO(N)$,
it follows immediately from the definitions that
$\LT_\preceq(\HHH)=\LT_{\phi({\preceq})}(\slPhi(H))$ for all ${\preceq}\in\fraktur{T}$.
Conversely, given any $E\subseteq K[X]$ and $\fraktur{S}\subseteq\TO(M)$, one has
$\LT_\leq(E)=\LT_{\psi({\leq})}(\slPsi(E))$ for all ${\leq}\in\fraktur{S}$.
It immediately follows that $\TTOO_\fraktur{T}(\HHH)=\TTOO_{\phi(\fraktur{T})}(\slPhi(\HHH))$
and
$\TTOO_\fraktur{S}(E)=\TTOO_{\psi(\fraktur{S})}(\slPsi(E))$,
and
even %more
that
$\LLOO_\fraktur{T}(\HHH)=\LLOO_{\phi(\fraktur{T})}(\slPhi(\HHH))$
and
$\LLOO_\fraktur{S}(E)=\LLOO_{\psi(\fraktur{S})}(\slPsi(E))$.
\end{rem}

\begin{thm}\label{fini2}
Let $H\subseteq V$ and let $\mathfrak{T}\subseteq\TO(N)$ be closed.
Then $\LLOO_\fraktur{T}(H)$ is finite,
that is, there exist at most finitely many distinct minimal leading monomial ideals of $H$
from~$\fraktur{T}$.
\end{thm}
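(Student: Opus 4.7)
My plan is to deduce this theorem directly from the corresponding result for $K[X]$ (Corollary \ref{fini}) by transporting the situation along the homeomorphism $\phi$ introduced in Remark \ref{phi} and Theorem \ref{homeo}.

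First I would set $E = \slPhi(H) \subseteq K[X]$ and $\fraktur{S} = \phi(\fraktur{T}) \subseteq \TO(M)$. By Remark \ref{parapapunzi}, I have the identity $\LLOO_\fraktur{T}(H) = \LLOO_{\phi(\fraktur{T})}(\slPhi(H)) = \LLOO_\fraktur{S}(E)$, so it suffices to show that $\LLOO_\fraktur{S}(E)$ is finite. Since $\phi \colon \TO(N) \to \TO(M)$ is a homeomorphism by Theorem \ref{homeo}, it is in particular a closed map, so from the hypothesis that $\fraktur{T}$ is closed in $\TO(N)$ it follows that $\fraktur{S} = \phi(\fraktur{T})$ is closed in $\TO(M)$.

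Now I can invoke Corollary \ref{fini} applied to the subset $E \subseteq K[X]$ and the closed subset $\fraktur{S} \subseteq \TO(M)$: this gives immediately that $\LLOO_\fraktur{S}(E)$ is finite. Combining this with the identity above yields that $\LLOO_\fraktur{T}(H)$ is finite, as desired.

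I do not foresee any real obstacle here, since all the transfer machinery has already been assembled: Theorem \ref{homeo} furnishes the homeomorphism, Remark \ref{parapapunzi} identifies the sets of minimal leading monomial ideals on both sides, and Corollary \ref{fini} is precisely the $K[X]$-version of the desired finiteness statement. The only mild point to check is that ``closed under a homeomorphism'' transports closedness (a standard fact), which is why I chose to apply the homeomorphism $\phi$ rather than attempt to re-run the compactness/discreteness argument of \ref{LOE compatto}, \ref{finito}, and \ref{fini} intrinsically on $\TO(N)$.
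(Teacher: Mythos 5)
Your argument is correct and is precisely the paper's own proof, which simply cites \ref{parapapunzi}, \ref{homeo}, and \ref{fini}; you have just spelled out the details (transporting $H$ and $\fraktur{T}$ along $\slPhi$ and $\phi$, noting that a homeomorphism preserves closedness, and invoking the $K[X]$-version of the finiteness result). No issues.
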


\begin{proof}
Clear by \ref{parapapunzi}, \ref{homeo}, and \ref{fini}.
\end{proof}

\begin{dfn}
We put $\DO(N)=\phi^{-1}(\DO(M))$,
and call $\DO(N)$ the set of all \notion{degree orderings on $N$}.
\end{dfn}

\begin{rem}\label{WO M N}
$\SO_{\slPhi^{-1}(1)}(N)=\phi^{-1}(\SO_1(M))$ and $\WO(N)=\phi^{-1}(\WO(M))$.
Hence $\DO(N)\subseteq\SO_{\slPhi^{-1}(1)}(N)\cap\WO(N)$ by \ref{dco1} and \ref{dco2}.
Moreover, by \ref{homeo} and \ref{DO(M) compact}, $\DO(N)$ is closed in $\TO(N)$ and compact.
\end{rem}

\begin{thm}\label{fini3b}
For each $H\subseteq V$ and each $\mathfrak{T}\subseteq\DO(N)$ the set $\TTOO_\fraktur{T}(H)$
is finite, that is, there exist at most finitely many distinct leading monomial ideals of $H$
from~$\fraktur{T}$.
\end{thm}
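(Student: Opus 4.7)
The plan is to reduce the statement to its known counterpart in the polynomial ring $K[X]$ by transporting everything through the isomorphism $\slPhi$, exactly as was done for Theorem \ref{fini2}. The key observation is that the results of Section~\ref{chp1sec2'}, in particular Corollary \ref{endlich2}, already give the finiteness of $\TTOO_\fraktur{S}(E)$ for any $E\subseteq K[X]$ and any $\fraktur{S}\subseteq\DO(M)$; and Remark \ref{parapapunzi} explicitly identifies $\TTOO_\fraktur{T}(H)$ with $\TTOO_{\phi(\fraktur{T})}(\slPhi(H))$.

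First I would set $E=\slPhi(H)\subseteq K[X]$ and $\fraktur{S}=\phi(\fraktur{T})\subseteq\TO(M)$. By the very definition $\DO(N)=\phi^{-1}(\DO(M))$, the hypothesis $\fraktur{T}\subseteq\DO(N)$ gives $\fraktur{S}=\phi(\fraktur{T})\subseteq\DO(M)$, so $\fraktur{S}$ satisfies the hypothesis of Corollary \ref{endlich2}. (Note that we do not need $\fraktur{T}$ to be closed here, only contained in $\DO(N)$, and likewise $\fraktur{S}$ need not be closed in $\DO(M)$ for \ref{endlich2} to apply.)

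Next I would invoke Remark \ref{parapapunzi} to obtain the equality
\begin{equation*}
\TTOO_\fraktur{T}(H)=\TTOO_{\phi(\fraktur{T})}(\slPhi(H))=\TTOO_\fraktur{S}(E).
\end{equation*}
By Corollary \ref{endlich2} the right-hand side is finite, and hence so is $\TTOO_\fraktur{T}(H)$. This completes the proof.

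There is essentially no obstacle: all the machinery is already in place. The homeomorphism $\phi$ of Theorem \ref{homeo} is used only implicitly, via the fact that $\phi$ is a bijection which maps $\DO(N)$ bijectively onto $\DO(M)$, so that membership hypotheses transfer without loss. The finiteness content is borne entirely by Corollary \ref{endlich2}, whose proof in turn rested on the Hilbert function argument of Lemma \ref{llll} together with compactness and discreteness (Theorem \ref{endlich}). The present theorem is thus the natural transport of that finiteness result from $K[X]$ to the $K$-module $V$.
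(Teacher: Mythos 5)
Your argument is exactly the paper's: the paper proves Theorem \ref{fini3b} by citing Remark \ref{parapapunzi} (to identify $\TTOO_\fraktur{T}(H)$ with $\TTOO_{\phi(\fraktur{T})}(\slPhi(H))$) and Corollary \ref{endlich2} (for the finiteness over $\DO(M)$), which is precisely the reduction you carry out in detail. Your proposal is correct and matches the paper's proof.
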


\begin{proof}
Clear by \ref{parapapunzi} and \ref{endlich2}.
\end{proof}

%\vskip\baselineskip
\section{$\mathfrak{T}$-multiplicative algebras of countable type}\label{mact}

\noindent
We keep the notation of the previous section.

\begin{dfn}
An algebra of \notion{countable type} is  a quadruple $A_K^{t,\slPhi}=(A,K,t,\slPhi)$ 
consisting of an associative, not necessarily commutative algebra $A$ over a field $K$,
a non\-negative integer $t$,
and a $K$-module isomorphism $\slPhi$ of $A$ in $K[X_1,\ldots,X_t]$.

If $A_K^{t,\slPhi}$ is an algebra of countable type and if $M$ is the canonical $K$-basis
of  $K[X_1,\ldots,X_t]$
consisting of all monomials $X^\nu$, $\nu\in\NN_0^t$,
then $N=\slPhi^{-1}(M)$ is a countable
$K$-basis of $A$, which we call the \notion{canonical basis of $A_K^{t,\slPhi}$}.

%In this situation, 
Given any subset $\mathfrak{T}$ of the set $\TO(N)$ of all total orderings on $N$,
we say that $A_K^{t,\slPhi}$ or simply $A$ is \notion{multiplicative on $\mathfrak{T}$} or
\notion{$\mathfrak{T}\text{-}$multiplicative} if $A$ is a domain and %of integrity and
in the notation of \ref{Supp2} it holds $\LT_\preceq(ab)=\LT_\preceq(a)\LT_\preceq(b)$
for all $a,b\in A\wo\{0\}$ and all ${\preceq}\in\mathfrak{T}$.
%where we are using the notation of~\ref{Supp2}.
\end{dfn}

\noindent
Henceforth in this section, let $A_K^{t,\slPhi}$ be an algebra of countable type.
We  write $K[X]$ for $K[X_1,\ldots,X_t]$  %by $K[X]$
and fix the canonical $K$-bases $M$ and $N$ of $K[X]$ and $A_K^{t,\slPhi}$\!, respectively.
Now we may make use of the notation introduced in~\ref{Supp2}.
And yet another$\ldots$ Macaulay Basis Theorem, that is,
a slight generalization of a classical result.

\begin{thm}\label{macaulay}
Let ${\preceq}\in\WO(N)$,
assume that $A_K^{t,\slPhi}$ is  multiplicative on $\{\preceq\}$,
let $L$ be a left ideal of $A$,
put $B=M\wo \LM_\preceq(L)$,
and let $\overline{\phantom{\bullet}} : K[X]\rightarrow K[X]\mod \slPhi(L)$
be the residue class epimorphism of $K$-modules.
Then the image $\overline{B}$ of $B$ under $\overline{\phantom{\bullet}}$ is a $K$-basis
of $K[X]\mod \slPhi(L)$.
\end{thm}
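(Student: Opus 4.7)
The plan is to verify the two basis axioms separately. For linear independence, the argument is short: suppose $p=\sum_{b\in B'} c_b b\in \slPhi(L)$ with $B'\subseteq B$ finite and every $c_b\in K\wo\{0\}$, and put $\ell=\slPsi(p)\in L$. If $B'$ were non\-empty, then $\ell\ne 0$ and $\LT_\preceq(\ell)\in\Supp(p)=B'\subseteq B=M\wo\LM_\preceq(L)$, contradicting the fact that $\LT_\preceq(\ell)$ is, by the definition in~\ref{Supp2}, a generator of $\LM_\preceq(L)$.

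For spanning, the main tool is well-founded induction driven by the well-ordering ${\leq}=\phi({\preceq})$ on $M$ (which is a well-ordering by ${\preceq}\in\WO(N)$ together with~\ref{homeo}). Given $p\in K[X]$, I examine the finite set $\Supp(p)\cap\LM_\preceq(L)$ and induct on its $\leq$-maximum $m^*$, taking the empty case as the base. If the set is empty, then $\Supp(p)\subseteq B$, so $\overline{p}\in\mathrm{span}_K(\overline{B})$ at once; otherwise I produce some $p'\equiv p\pmod{\slPhi(L)}$ whose analogous set either is empty or has strictly smaller $\leq$-maximum, and conclude by the inductive hypothesis.

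The reduction step draws crucially on both hypotheses on $A$. Since $m^*\in\LM_\preceq(L)$, \ref{cox242} supplies some $\ell\in L\wo\{0\}$ whose leading monomial $\LT_\preceq(\ell)$ divides $m^*$ in $K[X]$, say $m^*=m_0\cdot\LT_\preceq(\ell)$ with $m_0\in M$. Because $A$ is a domain and multiplicative on $\{\preceq\}$, the element $\ell'=\slPsi(m_0)\cdot\ell$ belongs to $L\wo\{0\}$ and satisfies
\begin{equation*}
\LT_\preceq(\ell')=\LT_\preceq(\slPsi(m_0))\cdot\LT_\preceq(\ell)=m_0\cdot\LT_\preceq(\ell)=m^*,
\end{equation*}
using that $\slPsi(m_0)\in N$ is itself a basis element, so $\LT_\preceq(\slPsi(m_0))=m_0$. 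Hence $\slPhi(\ell')\in\slPhi(L)$ has leading monomial $m^*$ with respect to $\leq$, and subtracting from $p$ a suitable scalar multiple of $\slPhi(\ell')$ cancels the coefficient of $m^*$ while introducing only monomials strictly $\leq$-below $m^*$. This yields the desired $p'$.

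The main obstacle is precisely the joint use of these two hypotheses: multiplicativity is what lets us exhibit an element of $L$ whose leading monomial is \emph{exactly} $m^*$, rather than merely some multiple of $\LT_\preceq(\ell)$, and the well-ordering is what ensures that the induction on $m^*$ actually terminates. Without the former, the cancellation might not eliminate the top monomial cleanly; without the latter, the descent could continue forever.
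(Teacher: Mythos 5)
Your proof is correct and follows essentially the same route as the paper's: the linear independence argument (an element of $\slPhi(L)$ supported on $B$ would have its leading monomial in $B\cap\LM_\preceq(L)=\emptyset$) is identical, and your spanning argument uses the same two key ingredients in the same way — the well-ordering $\phi({\preceq})$ to drive a descent, and multiplicativity together with the domain hypothesis to produce $\slPsi(m_0)\ell\in L$ with leading monomial exactly $m^*$, exactly as the paper forms $nx$ with $\LM_\preceq(nx)=\slPhi(n)\LM_\preceq(x)$. The only cosmetic difference is that you run a direct well-founded induction on the $\leq$-maximum of $\Supp(p)\cap\LM_\preceq(L)$, whereas the paper argues by contradiction from a $p$ whose leading monomial is $\leq$-minimal among polynomials not in the span.
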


\begin{proof}
We first show that $\overline{B}$ generates $K[X]\mod \slPhi(L)$ over $K$.
Suppose it is not the case.
Let $\overline{W}=\sum_{b\in B}K\overline{b}$.
Then the set $P=\{p\in K[X]\wo\{0\}\mid\overline{p}\notin\overline{W}\}$ is non\-empty.
Thus, with ${\leq}=\phi({\preceq})$,
the subset $Q=\{\LM_{\leq}(p)\mid p\in P\}$ of $M$ is non\-empty.
As ${\phi(\preceq})\in\WO(M)$, see \ref{WO M N}, we may choose $p\in P$
such that $\LM_{\leq}(p)$ is minimal in $Q$ with respect to ${\leq}$.
%and such that $\LC_{\leq}(p)=1$.
It holds $\overline{\Supp(p)\wo\{\LM_{\leq}(p)\}}\subseteq\overline{W}$.
Indeed, if there existed $m\in\Supp(p)\wo\{\LM_{\leq}(p)\}$
such that ${\overline{m}\notin\overline{W}}$,
then we would have $m\in P$ and hence $m=\LM_{\leq}(m)\in Q$,
and this would contradict the minimality of $\LM_{\leq}(p)$ as clearly $m < \LM_{\leq}(p)$.
It follows $\overline{\LM_{\leq}(p)}\notin\overline{W}$
as otherwise we would have $\overline{\Supp(p)}\subseteq\overline{W}$ and hence
the contradiction $\overline{p}\in\overline{W}$.
Therefore ${\LM_{\leq}(p)\in\LM_\preceq(L)}$
as otherwise we would have $\LM_{\leq}(p)\in B$ and
hence the contradiction $\overline{\LM_{\leq}(p)}\in\overline{B}\subseteq\overline{W}$.
%As $\LM_\preceq(L)$ is a monomial ideal of $K[X]$,
Thus we find $x\in L\wo\{0\}$ such that
$\LM_\preceq(x)\mid\LM_{\leq}(p)$, see \ref{cox242}.
So we find $n\in N$ with % such that 
${\LM_{\leq}(p)=\slPhi(n)\LM_\preceq(x)=\LM_\preceq(n)\LM_{\preceq}(x)=\LM_{\preceq}(nx)}$,
where this last equality holds by  multiplicativity of $A_K^{t,\slPhi}$ on~$\{{\preceq}\}$.
With $q=\LC_\leq(p)\LC_\leq(\slPhi(nx))^{-1}\slPhi(nx)$ we obtain $q\in\slPhi(L)$
as $L$ is a left ideal and $\slPhi(L)$ is a $K$-module,
and of course we have $\LM_{\leq}(p)=\LM_{\leq}(q)$ and $\LC_\leq(p)=\LC_\leq(q)$.
Now we consider $r=p-q$.
It holds $\overline{r}=\overline{p}$.
Thus $\overline{r}\notin\overline{W}$.
But then in particular $r\ne 0$,
and hence clearly $\LM_{\leq}(r)<\LM_{\leq}(p)$,
thus $r\notin P$ by the minimality of $\LM_{\leq}(p)$,
so that $\overline{r}\in\overline{W}$, a contradiction.

Next we show that $\overline{B}$ is linearly independent over $K$.
Suppose to the contrary that there exist $r\in\NN$ and $\alpha_1,\ldots,\alpha_r\in K\wo\{0\}$
and pairwise distinct $\overline{b}_1,\ldots,\overline{b}_r\in\overline{B}$ such that
$\alpha_1\overline{b}_1+\ldots+\alpha_r\overline{b}_r=\overline{0}$.
Then any respective representatives $b_1,\ldots,b_r\in B$ of
$\overline{b}_1,\ldots,\overline{b}_r$ are pairwise distinct and
$\alpha_1 b_1+\ldots+\alpha_r b_r=\slPhi(y)$ for some $y\in L$.
Of course, $y\ne 0$ as the monomials $b_1,\ldots,b_r$ are linearly independent over $K$.
It follows ${\LM_{\leq}(\slPhi(y))=b_i\in B}$ for some ${i\in\{1,\ldots,r\}}$.
%Clearly, $\LM_{\leq}(\slPhi(y))\in\LM_{\leq}(\slPhi(L))$.
Therefore  ${\LM_{\leq}(\slPhi(y))\in B\cap \LM_{\leq}(\slPhi(L))}$,
that is, ${\LM_\preceq(y)\in B\cap \LM_{\preceq}(L)}$ by \ref{parapapunzi}.
But, by definition, $B\cap \LM_{\preceq}(L)=\emptyset$, a contradiction.
\end{proof}

\begin{cor}\label{lollo}
Let ${\preceq},{\preceq'}\in\WO(N)$,
assume that $A_K^{t,\slPhi}$ is  multiplicative on $\{ {\preceq}, {\preceq'} \}$,
and let $L$ be a left ideal of $A$ with $\LM_{\preceq}(L)\subseteq\LM_{\preceq'}(L)$.
Then $\LM_{\preceq}(L)=\LM_{\preceq'}(L)$.
\end{cor}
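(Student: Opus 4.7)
The plan is to exploit the Macaulay Basis Theorem~\ref{macaulay} twice, once for each of the two well-orderings, and compare the resulting $K$-bases of the quotient $K[X]\mod\slPhi(L)$.

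Write $B=M\wo\LM_{\preceq}(L)$ and $B'=M\wo\LM_{\preceq'}(L)$. Since $A_K^{t,\slPhi}$ is multiplicative on $\{\preceq\}$ and on $\{\preceq'\}$, and since both orderings are well-orderings on $N$, the Macaulay Basis Theorem applies to each. Thus the images $\overline{B}$ and $\overline{B'}$ in $\overline{K[X]}=K[X]\mod\slPhi(L)$ are each a $K$-basis. In particular, the residue class maps $B\to\overline{B}$ and $B'\to\overline{B'}$ are both bijections onto linearly independent sets.

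The hypothesis $\LM_{\preceq}(L)\subseteq\LM_{\preceq'}(L)$ yields immediately $B\supseteq B'$ by taking complements in $M$. The core of the argument is to rule out strict inclusion: suppose towards a contradiction that there exists $b\in B\wo B'$. Since $\overline{B'}$ spans $\overline{K[X]}$, we may write $\overline{b}=\sum_{b'\in B'}\alpha_{b'}\overline{b'}$ with only finitely many $\alpha_{b'}\in K$ nonzero. But $B'\subseteq B$ and $b\notin B'$, so the elements $b$ and those $b'\in B'$ with $\alpha_{b'}\neq 0$ are pairwise distinct members of $B$, which means we have exhibited a nontrivial $K$-linear dependency among elements of $\overline{B}$. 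This contradicts the linear independence of $\overline{B}$ guaranteed by~\ref{macaulay}.

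Hence $B=B'$, i.e.\ $M\wo\LM_{\preceq}(L)=M\wo\LM_{\preceq'}(L)$, so $\LM_{\preceq}(L)$ and $\LM_{\preceq'}(L)$ contain exactly the same monomials; being monomial ideals of $K[X]$, they are equal by Remark~\ref{cox242}. I expect no serious obstacle here: once Macaulay is available on both sides, the rest is a short linear-algebraic comparison. The only point requiring a moment of care is checking that $B\to\overline{B}$ is actually injective, which is built into the statement that $\overline{B}$ is a $K$-basis (linear independence forbids two distinct monomials in $B$ from having the same residue class).
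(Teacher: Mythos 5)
Your proof is correct in substance and follows essentially the same route as the paper: apply the Macaulay Basis Theorem \ref{macaulay} to both orderings and compare the complements $B$ and $B'$ in $M$. The one point where your write-up is too quick is precisely the point you flag at the end. The injectivity of $b\mapsto\overline{b}$ on $B$ is \emph{not} a formal consequence of the statement that the image set $\overline{B}$ is a $K$-basis: if two distinct monomials $b_1,b_2\in B$ had the same residue class, the set $\overline{B}$ would simply be smaller than $B$ and could still be linearly independent, so linear independence of the set does not by itself forbid this. Without injectivity, your ``nontrivial dependency among elements of $\overline{B}$'' could collapse; for instance, if $\overline{b}=\overline{b'}$ for a single $b'\in B'$, the relation $\overline{b}-\overline{b'}=0$ involves only one element of the set $\overline{B}$ and contradicts nothing. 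The missing half-line is exactly the middle step of the paper's proof: if $b_1\ne b_2$ in $B$ with $\overline{b_1}=\overline{b_2}$, then $b_1-b_2\in\slPhi(L)\wo\{0\}$, so $\LM_{\phi({\preceq})}(b_1-b_2)\in\LM_{\preceq}(L)$, while on the other hand this leading monomial is $b_1$ or $b_2$ and hence lies in $B=M\wo\LM_{\preceq}(L)$, a contradiction. With that inserted your argument is complete; the paper then concludes slightly differently (it shows $\overline{B}\supsetneq\overline{B'}$ and notes that one $K$-basis cannot strictly contain another), but that is only a cosmetic variant of your direct dependency argument.
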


\begin{proof}
Put $B=M\wo\LM_{\preceq}(L)$ and $B'=M\wo\LM_{\preceq'}(L)$.
Let %us denote by
${\overline{\phantom{\bullet}}:K[X]\rightarrow K[X]\mod\slPhi(L)}$
be 
the residue class homomorphism (of $K$-modules).
Suppose by contradiction that $\LM_{\preceq}(L)\subsetneq\LM_{\preceq'}(L)$.
Then $B\supsetneq B'$, hence $\overline{B}\supseteq\overline{B'}$.

If it held $\overline{B}=\overline{B'}$,
then we would find $b\in B\wo B'$ and $b'\in B'$ such that $\overline{b}=\overline{b'}$,
hence $b-b'\in\slPhi(L)$,
thus $\LM_{\phi({\preceq})}(b-b')\in\LM_{\phi({\preceq})}(\slPhi(L))=\LM_\preceq(L)$;
on the other hand, either $\LM_{\phi({\preceq})}(b-b')=b$ or $\LM_{\phi({\preceq})}(b-b')=b'$,
in any case $\LM_{\phi({\preceq})}(b-b')\in B$, a contradiction.

Thus $\overline{B}\supsetneq\overline{B'}$.
But, by \ref{macaulay}, $\overline{B}$ and $\overline{B'}$ are $K$-bases of $K[X]\mod\slPhi(L)$,
hence the one cannot strictly contain the other, a contradiction.
% Thus $\LM_{\preceq}(L)=\LM_{\preceq'}(L)$.
\end{proof}

\begin{cor}\label{fini3}
Let $\mathfrak{T}\subseteq\WO(N)$ such that $\mathfrak{T}$ is closed in $\TO(N)$,
assume that $A_K^{t,\slPhi}$ is multiplicative on~$\mathfrak{T}$,
and let $L$ be a left ideal of $A$.
Then $\TTOO_\mathfrak{T}(L)=\LLOO_\mathfrak{T}(L)$.
In particular, $\TTOO_\mathfrak{T}(L)$ is finite, that is,
$L$ admits at most finitely many distinct leading monomial ideals from~$\mathfrak{T}$.
\end{cor}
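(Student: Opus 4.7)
The plan is to reduce the statement to the two preceding results \ref{lollo} and \ref{fini2}. The inclusion $\LLOO_{\mathfrak{T}}(L) \subseteq \TTOO_{\mathfrak{T}}(L)$ is automatic from the definitions in \ref{Supp2}, so the substantive content of the first assertion is the reverse inclusion.

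To prove $\TTOO_{\mathfrak{T}}(L) \subseteq \LLOO_{\mathfrak{T}}(L)$, I would pick an arbitrary ${\preceq'} \in \mathfrak{T}$ and show that $\LM_{\preceq'}(L)$ is a $\subseteq$-minimal element of $\TTOO_{\mathfrak{T}}(L)$. So suppose ${\preceq} \in \mathfrak{T}$ satisfies $\LM_{\preceq}(L) \subseteq \LM_{\preceq'}(L)$. Since $\mathfrak{T} \subseteq \WO(N)$, both ${\preceq}$ and ${\preceq'}$ are well-orderings on $N$, and since $A_K^{t,\slPhi}$ is multiplicative on $\mathfrak{T}$, it is in particular multiplicative on $\{\preceq,\preceq'\}$. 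Hence Corollary \ref{lollo} applies and gives $\LM_{\preceq}(L) = \LM_{\preceq'}(L)$. This shows ${\preceq'} \in \MIN_L(\mathfrak{T})$, so $\LM_{\preceq'}(L) \in \LLOO_{\mathfrak{T}}(L)$, which yields the desired inclusion and thus the equality $\TTOO_{\mathfrak{T}}(L) = \LLOO_{\mathfrak{T}}(L)$.

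For the finiteness assertion, the hypothesis that $\mathfrak{T}$ is closed in $\TO(N)$ is exactly what is needed to invoke Theorem \ref{fini2} with $H = L$ (viewing the left ideal $L$ simply as a subset of the underlying $K$-module of $A$, playing the role of $V$). That theorem guarantees that $\LLOO_{\mathfrak{T}}(L)$ is finite. Combining with the equality just established, $\TTOO_{\mathfrak{T}}(L)$ is finite.

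There is no real obstacle here: the entire work has already been done in the Macaulay-type result \ref{macaulay} and its consequence \ref{lollo}, which convert inclusion of leading monomial ideals into equality under the multiplicativity hypothesis, and in the topological finiteness result \ref{fini2}. The only thing to check carefully is that the hypotheses of \ref{lollo} and \ref{fini2} transfer verbatim to the situation at hand, namely that $\mathfrak{T} \subseteq \WO(N)$ ensures both orderings in any pair from $\mathfrak{T}$ are well-orderings, and that $\mathfrak{T}$-multiplicativity implies multiplicativity on any two-element subset of $\mathfrak{T}$. Both are immediate from the definitions.
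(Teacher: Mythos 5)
Your proposal is correct and is essentially the paper's own proof written out in more detail: the paper likewise deduces $\mathfrak{T}=\MIN_L(\mathfrak{T})$ from Corollary \ref{lollo}, concludes $\TTOO_\mathfrak{T}(L)=\LLOO_\mathfrak{T}(L)$, and then invokes Theorem \ref{fini2} for finiteness.
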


\begin{proof}
By \ref{lollo}, $\mathfrak{T}=\min_L(\mathfrak{T})$,
thus $\TTOO_\mathfrak{T}(L)=\LLOO_\mathfrak{T}(L)$,
which is finite by \ref{fini2}.
\end{proof}

%\vskip\baselineskip
\section{Admissible orderings}\label{sao}

\noindent
We keep the notation of the previous section.
%, in particular we keep on referring to the algebra
%$A_K^{t,\slPhi}$ of countable type fixed there and to its canonical $K$-basis $N$.

% In this section, we aim to single out an example of a closed subset of $\TO(N)$ contained
% in $\WO(N)$.
% The the space $\AO(N)$ of all admissible orderings on $N$ will tutn out to be such an example.
% We shall then see in the next section that the class of $K$-algebras satisfying the hypotheses
% of \ref{fini3} on $\AO(N)$ is quite vast.

\begin{dfn}\label{cumpa}
A \notion{compatible ordering on $M$} or 
\notion{of $K[X]$} is a total ordering ${\leq}$ on $M$ such that
for all $\upsilon,\nu,\gamma\in\NN_0^t$ it holds compatibility:
${X^\upsilon\leq X^\nu \dann X^{\upsilon+\gamma}\leq X^{\nu+\gamma}}$.

% As $M$ is a $K$-basis of $K[X]$, this axiom is equivalent to:
% ${X^\upsilon < X^\nu \dann X^{\upsilon+\gamma} < X^{\nu+\gamma}}$
% for all $\upsilon,\nu,\gamma\in\NN_0^t$.

Compatible orderings are also known as \notion{semigroup orderings}.
The set of all compatible orderings of $K[X]$ is denoted by $\CO(M)$.

We also consider the set of \notion{compatible orderings on $N$} or \notion{of $A_K^{t,\slPhi}$}
or simply \notion{of $A$} defined as $\CO(N)=\phi^{-1}(\CO(M))$.
\end{dfn}

\begin{pro}\label{CO(M) compact}
$\CO(M)$ and $\CO(N)$ are closed in $\TO(M)$ and $\TO(N)$, re\-spect\-ive\-ly,
and hence compact.
\end{pro}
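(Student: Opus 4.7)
The plan is to exhibit $\CO(M)$ as an intersection of closed subsets of $\TO(M)$, taking full advantage of the fact that the subbasic sets $\fraktur{U}_{(a,b)}$ are not merely open but clopen, as noted in Definition~\ref{def 1}. For each triple $(\upsilon,\nu,\gamma)\in(\NN_0^t)^3$, the compatibility implication
\begin{equation*}
X^\upsilon\leq X^\nu \;\dann\; X^{\upsilon+\gamma}\leq X^{\nu+\gamma}
\end{equation*}
is equivalent to requiring ${\leq}$ to lie in the set
\begin{equation*}
\fraktur{C}_{(\upsilon,\nu,\gamma)} \;=\; \bigl(\TO(M)\wo \fraktur{U}_{(X^\upsilon,X^\nu)}\bigr)\,\cup\,\fraktur{U}_{(X^{\upsilon+\gamma},X^{\nu+\gamma})}.
\end{equation*}

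Since $\fraktur{U}_{(a,b)}$ is clopen in $\TO(M)$ for every ordered pair $(a,b)$, the complement $\TO(M)\wo\fraktur{U}_{(X^\upsilon,X^\nu)}$ is clopen as well, and hence $\fraktur{C}_{(\upsilon,\nu,\gamma)}$ is clopen, being a finite union of clopen sets. In particular each $\fraktur{C}_{(\upsilon,\nu,\gamma)}$ is closed. Now
\begin{equation*}
\CO(M) \;=\; \bigcap_{(\upsilon,\nu,\gamma)\in(\NN_0^t)^3} \fraktur{C}_{(\upsilon,\nu,\gamma)}
\end{equation*}
expresses $\CO(M)$ as an arbitrary intersection of closed sets, so $\CO(M)$ is closed in $\TO(M)$. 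By Theorem~\ref{compact}, $\TO(M)$ is compact, hence so is its closed subspace $\CO(M)$.

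For $\CO(N)$, by Theorem~\ref{homeo} the map $\phi:\TO(N)\rightarrow\TO(M)$ is a homeomorphism. Since $\CO(N)=\phi^{-1}(\CO(M))$ by Definition~\ref{cumpa}, the set $\CO(N)$ is the preimage of a closed set under a continuous map, hence closed in $\TO(N)$; and being closed in the compact space $\TO(N)$, it is compact as well. There is essentially no obstacle here: the only point that requires attention is recognizing that compatibility is a \emph{Horn-type} axiom in the subbasis and that $\fraktur{U}_{(a,b)}$ is already closed (not merely open), so that no limit argument involving filtrations is needed, although such an argument in the style of the proof of Lemma~\ref{DO(M) compact} would also work.
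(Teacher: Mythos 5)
Your proof is correct, and it takes a genuinely different route from the paper's. The paper proves closedness of $\CO(M)$ by the accumulation-point argument: given an accumulation point ${\leq}$ of $\CO(M)$ and monomials $X^\upsilon,X^\nu,X^{\upsilon+\gamma},X^{\nu+\gamma}$, one picks a compatible ordering in a small metric ball around ${\leq}$ agreeing with ${\leq}$ on a finite filtration step containing those four monomials, and transfers compatibility pointwise — the same template used for $\DO(M)$ in Lemma~\ref{DO(M) compact}. You instead write
$\CO(M)=\bigcap_{(\upsilon,\nu,\gamma)}\bigl((\TO(M)\wo\fraktur{U}_{(X^\upsilon,X^\nu)})\cup\fraktur{U}_{(X^{\upsilon+\gamma},X^{\nu+\gamma})}\bigr)$,
which is exactly the translation of the universally quantified Horn clause defining compatibility into a Boolean combination of the subbasic sets, and then use that each $\fraktur{U}_{(a,b)}$ is clopen (as observed in \ref{def 1}) so that each factor is closed and the arbitrary intersection is closed. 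This is in fact the same style as the paper's own proof of Corollary~\ref{closed} for $\SO_a(S)$, so it is entirely consonant with the framework. What your argument buys: it needs neither the countability of $M$ nor the metric $d_{\mathbf S}$ nor Theorem~\ref{U=N}, so it would establish closedness of the analogously defined set of compatible orderings over any (possibly uncountable) monoid of monomials; it is also shorter. What the paper's argument buys is mainly uniformity of exposition with the proofs of \ref{DO(M) compact} and \ref{LOE compatto}, where a finite Boolean description of the defining condition is not available and the filtration argument is genuinely needed. The handling of $\CO(N)$ via the homeomorphism $\phi$ and of compactness via Theorem~\ref{compact} matches the paper.
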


\begin{proof}
Let $(S_i)_{i\in\NN_0}$ be a filtration of $M$ consisting of finite sets $S_i$.
Let ${\leq}\in\TO(M)$ be an accumulation point of $\CO(M)$. % in $\TO(M)$.
Thus, by definition, for each $r\in\NN_0$ there exists
${{\leq}_r\in\CO(M)\cap\fraktur{N}_r({\leq})\wo\{{\leq}\}}$, 
so that ${\leq}_r$ and ${\leq}$ agree on $S_{r+1}$.
Choose any ${\upsilon,\nu\in\NN_0^t}$ and assume that $X^\upsilon\leq X^\nu$, say.
Let $\gamma\in\NN_0^t$.
Then we find $r\in\NN_0$ such that $S_{r+1}$ contains the monomials
$X^\upsilon,X^\nu,X^{\upsilon+\gamma},X^{\nu+\gamma}$. %\in S_{r+1}$.
There exists ${\leq}_r$ as above that agrees with ${\leq}$ on $S_{r+1}$,
so that $X^\upsilon\leq_r X^\nu$.
Since ${\leq}_r$ is a compatible ordering of $K[X]$, it follows
$X^{\upsilon+\gamma}\leq_r X^{\nu+\gamma}$.
Therefore 
${X^{\upsilon+\gamma}\leq X^{\nu+\gamma}}$.
Hence ${\leq}\in\CO(M)$.
Thus $\CO(M)$ contains all its accumulation points in $\TO(M)$ and hence $\CO(M)$ is
closed in $\TO(M)$.
Since $\TO(M)$ is compact by \ref{compact}, $\CO(M)$ is compact.
Since $\phi$ is a homeo\-morphism by \ref{homeo}, also $\CO(N)$ is closed in $\TO(N)$
and compact.
\end{proof}

\begin{dfn}
$\AO(M)=\SO_1(M)\cap\CO(M)$
is the set of all \notion{admissible orderings on $M$} or \notion{of $K[X]$},
and $\AO(N)=\SO_{\slPhi^{-1}(1)}(N)\cap\CO(N)$
is the set of all \notion{admissible orderings  on $N$} or 
\notion{of $A_K^{t,\slPhi}$} or simply \notion{of $A$}.
Observe that $\phi^{-1}(\AO(M))=\AO(N)$.
Admissible orderings are also known as \notion{monoid orderings}.
\end{dfn}

\begin{rem}
One sees that this definition of admissible ordering on $M$ and on $N$ is equivalent to the one
given in \cite{Kan}, and it is equivalent to the notion of term orderings given
in \cite{Sai} in the case of Weyl algebras under the assumption that $\slPhi(1)=1$.
\end{rem}

\begin{rem}\label{AO(M)}
An admissible ordering of $K[X]$ is  a total ordering ${\leq}$ on $M$
% such that for all $\upsilon,\nu,\gamma\in\NN_0^t$
such that it holds
% enjoying the properties of
well-foundedness:~${1\leq X^\nu}$,
and 
compatibility: ${X^\upsilon\leq X^\nu \dann X^{\upsilon+\gamma}\leq X^{\nu+\gamma}}$.
%for all $\upsilon,\nu,\gamma\in\NN_0^t$.
%
Since $M$ is a $K$-basis of $K[X]$, these axioms are equivalent to:
${1 < X^\nu}$
whenever $\nu\ne 0$,
% for all $\nu\in\NN_0^t\wo\{\,\underline{0}\,\}$,
and
${X^\upsilon < X^\nu \dann X^{\upsilon+\gamma} < X^{\nu+\gamma}}$.
% for all $\upsilon,\nu,\gamma\in\NN_0^t$.
\end{rem}

\begin{exa}\label{exa ao}
The \notion{lexicographical ordering} ${\leq_\mathrm{lex}}$ on $M$
defined by
\begin{equation*}
{X^\upsilon\leq_{\mathrm{lex}}X^\nu}
:\gdw
{(\upsilon = \nu)} 
\oder
{(\upsilon \ne \nu \und \upsilon_{m( \upsilon,\nu )} < \nu_{m( \upsilon,\nu )})}
\end{equation*}
for all $\upsilon,\nu\in\NN_0^t$,
where we put $m(\alpha,\beta)=\min{}\{\,k\mid 1\le k\le t \,\wedge\, \alpha_k\ne\beta_k\,\}$
for all $\alpha,\beta\in\NN_0^t$ with $\alpha\ne\beta$,
is an admissible ordering of $K[X]$.
\end{exa}

\begin{exa}\label{exa ao 2}
Fixed any ${\leq}\in\AO(M)$,
for all $\omega\in\NN_0^t$ one can define the \notion{$\omega$-graded ${\leq}$-ordering}
${\leq}_\omega$ by
\begin{equation*}
{X^\upsilon\leq_{\omega}X^\nu}
:\gdw
{(\omega\cdot\upsilon < \omega\cdot\nu)} 
\oder
{(\omega\cdot\upsilon = \omega\cdot\nu \und X^\nu \!\leq Y^\upsilon)}
\end{equation*}
for all $\upsilon,\nu\in\NN_0^t$, and one has that ${\leq}_\omega$ is an admissible ordering 
of $K[X]$, see Exercise~12 in \cite[{II}.4]{Cox}
\end{exa}

\begin{pro}\label{AO(N) compact}
$\AO(M)$ and $\AO(N)$ are closed in $\TO(M)$ and $\TO(N)$,
respectively, and hence compact.
\end{pro}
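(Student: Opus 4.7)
The plan is to observe that $\AO(M)$ and $\AO(N)$ are already described as intersections of two sets that have each been shown to be closed in earlier results, so closedness follows immediately and compactness is then automatic.

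First I would recall that, by the very definition, $\AO(M) = \SO_1(M)\cap\CO(M)$ and $\AO(N) = \SO_{\Phi^{-1}(1)}(N)\cap\CO(N)$. Corollary \ref{closed} (applied once with $S=M$ and $a=1$, and once with $S=N$ and $a=\Phi^{-1}(1)$) gives that $\SO_1(M)$ is closed in $\TO(M)$ and $\SO_{\Phi^{-1}(1)}(N)$ is closed in $\TO(N)$. Proposition \ref{CO(M) compact} gives that $\CO(M)$ and $\CO(N)$ are closed in $\TO(M)$ and $\TO(N)$ respectively. As finite intersections of closed sets are closed, $\AO(M)$ is closed in $\TO(M)$ and $\AO(N)$ is closed in $\TO(N)$.

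For compactness, Theorem \ref{compact} asserts that $\TO(S)$ is compact for any set $S$, in particular for $S=M$ and $S=N$. Since a closed subset of a compact space is compact (in the relative topology fixed by our convention), $\AO(M)$ and $\AO(N)$ are compact. Alternatively, for $\AO(N)$ one could use that $\phi^{-1}(\AO(M))=\AO(N)$ together with Theorem \ref{homeo}, which shows that $\AO(N)$ is homeomorphic to the compact space $\AO(M)$; but the direct argument is already complete.

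There is no real obstacle here: the statement is a straightforward corollary of two previously established closedness results, one for the founded-orderings condition and one for the compatibility condition, combined with the compactness of $\TO(M)$ and $\TO(N)$. The only small point worth spelling out is that, although the definition of $\AO(N)$ is given via $\SO_{\Phi^{-1}(1)}(N)$ rather than via $\phi^{-1}(\SO_1(M))$, these two sets coincide, as is transparent from the very construction of $\phi$ in Remark \ref{phi}, so both viewpoints deliver the same conclusion.
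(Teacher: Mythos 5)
Your proposal is correct and follows essentially the same route as the paper, which likewise derives the result directly from Corollary \ref{closed}, Proposition \ref{CO(M) compact}, and Theorem \ref{compact}: $\AO$ is by definition the intersection of two closed sets, hence closed, and a closed subset of the compact space $\TO(\cdot)$ is compact. Your additional remark that $\SO_{\Phi^{-1}(1)}(N)=\phi^{-1}(\SO_1(M))$ is a harmless and accurate clarification.
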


\begin{proof}
Clear by \ref{CO(M) compact}, \ref{closed}, and \ref{compact}.
\end{proof}

\begin{pro}\label{cox}
$\AO(M)\!=\!\WO(M)\cap\CO(M)\!$ and $\AO(N)\!=\!\WO(N)\cap\CO(N)$.
\end{pro}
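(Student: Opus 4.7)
The plan is first to establish the equality for $M$, then to deduce it for $N$ by applying the homeomorphism $\phi:\TO(N)\to\TO(M)$ of~\ref{homeo}. Indeed, by definition and by~\ref{WO M N} we have $\AO(N)=\phi^{-1}(\AO(M))$, $\WO(N)=\phi^{-1}(\WO(M))$, and $\CO(N)=\phi^{-1}(\CO(M))$, so the $N$-version falls out of the $M$-version by taking preimages. Thus the real task is to prove $\AO(M)=\WO(M)\cap\CO(M)$, which, since $\AO(M)=\SO_1(M)\cap\CO(M)$ by definition, amounts to showing that inside $\CO(M)$ the two side conditions ``${1\leq m}$ for all $m\in M$'' and ``${\leq}$ is a well-ordering'' are equivalent.

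For the inclusion $\AO(M)\subseteq\WO(M)\cap\CO(M)$ I would argue as follows. Let ${\leq}\in\AO(M)$ and let $T\subseteq M$ be non\-empty; I want to produce a minimum of $T$ under~${\leq}$. The classical fact (Dickson's lemma, applied to $\{\nu\in\NN_0^t\mid X^\nu\in T\}$) provides a \emph{finite} subset $T_0\subseteq T$ such that every element of $T$ is divisible in $M$ by some element of~$T_0$. Being finite and totally ordered by~${\leq}$, the set $T_0$ possesses a minimum~$m_0$. For any $t\in T$ write $t=m_0'\cdot m$ with $m_0'\in T_0$ and $m\in M$; since ${1\leq m}$ by admissibility, compatibility yields $m_0'\leq m_0'\cdot m=t$, and then $m_0\leq m_0'\leq t$. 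Hence $m_0$ is a minimum of~$T$, which proves ${\leq}\in\WO(M)$.

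For the converse $\WO(M)\cap\CO(M)\subseteq\AO(M)$ I would suppose, towards a contradiction, that ${\leq}\in\WO(M)\cap\CO(M)$ but ${\leq}\notin\SO_1(M)$. Then by totality some $m\in M$ satisfies $m<1$. Multiplying this strict inequality by successive powers of~$m$ and using compatibility, one obtains $m^{k+1}<m^{k}$ for every $k\in\NN_0$, producing an infinite strictly descending chain $\ldots<m^{2}<m<1$ in~$M$, which contradicts ${\leq}\in\WO(M)$. Hence $1\leq m$ for every $m\in M$, i.e.\ ${\leq}\in\SO_1(M)$, and so ${\leq}\in\AO(M)$.

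The only non-trivial ingredient is the Dickson finiteness step used in the $\subseteq$ direction; without it one has no handle on the well-foundedness of a compatible $1$-founded total ordering. Everything else is a short compatibility-plus-transitivity argument, and the passage from $M$ to $N$ is a formal preimage computation along~$\phi$.
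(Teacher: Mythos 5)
Your proof is correct. It differs from the paper's only in one respect: the paper disposes of the substantive equality $\SO_1(M)\cap\CO(M)=\WO(M)\cap\CO(M)$ simply by citing \cite[II.4.6]{Cox}, and then transfers the statement to $N$ exactly as you do, by taking preimages under the homeomorphism $\phi$ of \ref{homeo} (using $\phi^{-1}(\CO(M))=\CO(N)$, $\phi^{-1}(\SO_1(M))=\SO_{\slPhi^{-1}(1)}(N)$ and $\phi^{-1}(\WO(M))=\WO(N)$). You instead prove the cited fact in full; your two arguments --- Dickson's lemma combined with compatibility and $1$-foundedness for $\AO(M)\subseteq\WO(M)$, and the infinite descending chain $\ldots<m^{2}<m<1$ for the converse --- are precisely the standard proof of that corollary, so mathematically the two approaches coincide and yours merely has the virtue of being self-contained. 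One pedantic point: compatibility as stated in \ref{cumpa} preserves only non-strict inequalities, so from $m<1$ you a priori obtain only $m^{k+1}\leq m^{k}$; the strictness needed for the descending chain follows because $m\neq 1$ forces $m^{k+1}\neq m^{k}$, whence antisymmetry gives $m^{k+1}<m^{k}$ (cf.\ Remark \ref{AO(M)}). This is a one-line repair, not a gap.
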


\begin{proof}
By \cite[II.4.6]{Cox} one has $\SO_1(M)\cap\CO(M)=\WO(M)\cap\CO(M)$.
Since $\phi^{-1}$ is injective and 
since $\phi^{-1}(\CO(M))=\CO(N)$
and $\phi^{-1}(\SO_1(M))=\SO_{\slPhi^{-1}(1)}(N)$
and $\phi^{-1}(\WO(M))=\WO(N)$,
the second claim follows.
\end{proof}

%\vskip\baselineskip
\section{Degree-compatible orderings}\label{sdo}

\noindent
We keep the notation of the previous section.

\begin{exa}\label{exa do 2}
It holds $\DO(M)\nsubseteq\CO(M)$ and hence $\DO(N)\nsubseteq\CO(N)$.
Indeed, any degree ordering ${\leq}$ of $K[Y,Z]$ such that
$1<Y<Z<YZ<Y^2<Z^2<\ldots$ is not compatible
because compatibility would force $Y^2<YZ$ from $Y<Z$.

Also it holds $\CO(M)\nsubseteq\DO(M)$ and hence $\CO(N)\nsubseteq\DO(N)$.
For instance,
the lexicographic ordering ${\leq_{\mathrm{lex}}}$ of $K[Y,Z]$ induced 
by $Y<_{\mathrm{lex}}Z$ is  compatible  but is not a degree ordering
as ${\deg(\LM_{\leq}(Y+Z^2))=\deg(Y)=1\ne 2=\deg(Y+Z^2)}$.
\end{exa}

\begin{rem+dfn}\label{DCO compact}
It is not to expect that there exist interesting $K$-algebras of countable type
that are multiplicative on $\DO(M)$ since even $K[X]$ is not.
For a degree ordering ${\leq}$ of $K[Y,Z]$ with %such that
$1<Y<Z<Y^2<Z^2<YZ<\ldots$~for instance, 
it holds $\LT_\leq((Y+Z)^2)=YZ \ne Z^2=\LT_\leq(Y+Z)\LT_\leq(Y+Z)$.

Therefore we shall consider the set $\DCO(M)=\DO(M)\cap\CO(M)$ of the
\notion{degree-compatible orderings on~$M$} or \notion{of $K[X]$} and
the set $\DCO(N)=\DO(N)\cap\CO(N)$ of the
\notion{degree-compatible orderings on $N$} or \notion{of $A_K^{t,\slPhi}$} or simply
\notion{of $A$}.

Of course, it holds $\DCO(N)=\phi^{-1}(\DCO(M))$.
Moreover, $\DCO(M)\subseteq\AO(M)$ by \ref{dco1}, and hence $\DCO(N)\subseteq\AO(N)$.
Finally, by \ref{DO(M) compact} and \ref{WO M N} and by \ref{CO(M) compact},
$\DCO(M)$ and $\DCO(N)$ are closed in $\TO(M)$ and $\TO(N)$, respectively, and compact.
\end{rem+dfn}

\begin{pro}\label{nowhere dense}
If $t>1$, where $t$ is the number of indeterminates,
then $\DCO(M)$ is nowhere dense in $\DO(M)$, and so is $\DCO(N)$ in $\DO(N)$.
\end{pro}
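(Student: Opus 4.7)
The plan is to show that $\DCO(M)$ has empty interior in $\DO(M)$, which yields the first claim since $\DCO(M)=\DO(M)\cap\CO(M)$ is closed in $\DO(M)$ by \ref{CO(M) compact}. The corresponding statement for $\DCO(N)$ in $\DO(N)$ will then follow from the homeomorphism $\phi:\TO(N)\to\TO(M)$ of \ref{homeo}, which by the very definitions restricts to a homeomorphism $\DO(N)\to\DO(M)$ carrying $\DCO(N)$ onto $\DCO(M)$; the property of being nowhere dense is preserved by such a homeomorphism.

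Fix a filtration $(S_i)_{i\in\NN_0}$ of $M$ by finite sets and an arbitrary element ${\leq}\in\DCO(M)$. By \ref{U=N}, every basic open neighbourhood of ${\leq}$ in $\DO(M)$ has the form $\mathfrak{N}_r({\leq})\cap\DO(M)$ and consists of exactly those degree orderings agreeing with ${\leq}$ on $S_{r+1}$. So the task reduces to producing, for every $r\in\NN_0$, some ${\leq'}\in\DO(M)\setminus\DCO(M)$ which agrees with ${\leq}$ on $S_{r+1}$.

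The construction rests on the observation that specifying an element of $\DO(M)$ amounts to specifying, for each degree $d\in\NN_0$, an arbitrary total ordering on the finite set of monomials of degree $d$; degrees must be compared first, but the orderings within each degree are otherwise free. Since $S_{r+1}$ is finite, I choose an integer $d$ strictly greater than every degree occurring in $S_{r+1}$; since $t>1$, there are at least two distinct monomials $X^\alpha,X^\beta$ of degree $d$, and with $\gamma=(1,0,\ldots,0)\in\NN_0^t$ also two distinct monomials $X^{\alpha+\gamma},X^{\beta+\gamma}$ of degree $d+1$, all of which lie outside $S_{r+1}$ by the choice of $d$. Define ${\leq'}$ to coincide with ${\leq}$ on monomials of any degree $d'\notin\{d,d+1\}$, and on the monomials of degrees $d$ and $d+1$ choose any total orderings with $X^\alpha<'X^\beta$ and $X^{\beta+\gamma}<'X^{\alpha+\gamma}$. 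Then ${\leq'}$ is a degree ordering agreeing with ${\leq}$ on $S_{r+1}$, so it lies in $\mathfrak{N}_r({\leq})\cap\DO(M)$, but it fails compatibility at the triple $(\alpha,\beta,\gamma)$, hence ${\leq'}\notin\DCO(M)$.

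The only subtle point, and hence the main obstacle to guard against, is the claim that a degree ordering is entirely free on each graded piece: this follows at once from the defining property $\deg(\LT_\leq(p))=\deg(p)$, which applied to two-term polynomials forces monomials of smaller degree to be smaller, while imposing no constraint among monomials of a common degree. Everything else is just avoiding the finite set $S_{r+1}$ by working in sufficiently high degree.
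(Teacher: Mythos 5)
Your proof is correct, and it follows the same overall strategy as the paper: observe that $\DCO(M)=\DO(M)\cap\CO(M)$ is closed in $\DO(M)$, so nowhere density reduces to empty interior, and then exhibit, arbitrarily close to any given ${\leq}$ (i.e.\ agreeing with it on $S_{r+1}$), a degree ordering that is not compatible. The execution of the perturbation differs in a way worth noting. The paper flips a \emph{single} pair $X_1^{r+2},X_1^{r+1}X_2$ in one degree and derives a contradiction from the assumption that the flipped ordering is still compatible, using that the relation $X_1<X_2$ survives on $S_2$; this requires the degree filtration, the remark that $r\ge 1$, and it leaves slightly implicit why the flipped relation is still a total (transitive) ordering. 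You instead first justify the structural fact that a degree ordering is exactly ``compare degrees first, then order each graded piece arbitrarily and independently'' (via two-term polynomials), and then flip pairs in \emph{two} consecutive degrees $d$ and $d+1$ so that the failure of compatibility is exhibited directly inside the perturbed region, with no reference to the original ordering's low-degree behaviour. This makes your construction self-contained and sidesteps the totality/transitivity subtlety; the only cosmetic points to tidy are that you should take $d\ge 1$ (so that degree $d$ really contains two monomials) and that the balls $\mathfrak{N}_r({\leq})$ are a neighbourhood \emph{basis} at ${\leq}$ rather than literally all basic open sets — both immediate from Theorem \ref{U=N}.
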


\begin{proof}
Consider the filtration $(S_i)_{i\in\NN_0}$ of $M$ given by $S_i=\{m\in M\mid\deg(m)<i\}$.
Suppose that some ordering ${\leq}$ lies in the interior $\DCO(M)^{\circ}$ of the closed subset
$\DCO(M)$ of $\DO(M)$.
Then we find a neighbourhood of ${\leq}$ open in $\DO(M)$  contained in 
 $\DCO(M)^{\circ}$, that is, we find $r\in\NN_0$ such that
${\mathfrak{N}_r({\leq})\cap\DO(M)\subseteq\DCO(M)}$.
Since $S_1=\{1\}$,
we have $\mathfrak{N}_0({\leq})=\TO(M)$.
As $\DCO(M)\subsetneq\DO(M)$, it follows $r\ge 1$.
Assume that $X_1<X_2$, say.
Then $X_1^{r+2}<X_1^{r+1}X_2$ by compatibility.
Let ${\leq'}$ be the total ordering on $M$ given by
$X_1^{r+1}X_2<' X_1^{r+2}$ and
$m\leq' m' \Leftrightarrow m\leq m'$ whenever
$(m,m')\in M\times M\wo\{(X_1^{r+1}X_2,X_1^{r+2})\}$.
Then ${\leq'}\in\mathfrak{N}_r({\leq})\cap\DO(M)$,
so that ${\leq'}\in\DCO(M)$.
As $r\ge 1$, we have that ${\leq}$ and ${\leq'}$ agree on $S_2$, thus $X_1<'X_2$.
By compatibility it follows $X_1^{r+2}<'X_1^{r+1}X_2$, a contradiction.
Now we conclude by~\ref{homeo}.
\end{proof}

\begin{rem}\label{T1}
If $t=1$,
then $\vert\DO(M)\vert=\vert\DCO(M)\vert=1=\vert\DCO(N)\vert=\vert\DO(N)\vert$,
thus $\DCO(M)=\DO(M)$ and $\DCO(N)=\DO(N)$.
\end{rem}

\begin{exa}\label{DCO exa}
For each ${\leq}\in\AO(M)$ the binary relation ${\leq_{\deg}}$ on $M$ defined by
\begin{equation*}
m \leq_{\deg} m' \Leftrightarrow \deg(m)<\deg(m')\vee (\deg(m)=\deg(m')\wedge m\le m').
\end{equation*}
is a degree-compatible ordering of $K[X]$.
More generally, the admissible orderings of Example \ref{exa ao 2} are degree-compatible
orderings whenever $\omega\ne 0$ or ${\leq}\in\DCO(M)$.
\end{exa}

\begin{rem}\label{dco endlich}
By \ref{fini3},
for each $H\subseteq A$ and each $\fraktur{T}\subseteq\DCO(N)$
the set $\TTOO_\fraktur{T}(H)$ is finite.
In particular, by \ref{exa ao}, \ref{exa ao 2}, and \ref{DCO exa},
the set
$\TTOO_{\DCO(N)}(H)$ is non\-empty and finite.
\end{rem}

%\vskip\baselineskip
\section{$\fraktur{T}$-admissible algebras}

\noindent
We keep the notation of the previous section.

% We discuss here the class of the $K$-algebras of admissible type, defined as the
% the $K$-algebras of countable type that are multiplicative on the space of 
% admissible orderings.
% A result of Kandri--Rody and Weispfenning exhibits a well-known subclass of this class,
% namely, the class of the $K$-algebras of solvable type with respect to every admissible
% ordering.

\begin{dfn}\label{dd}
Let $\mathfrak{T}\!\subseteq\!\AO(N)$.
We say that $A_K^{t,\slPhi}$ or simply $A$ is \notion{$\mathfrak{T}$-admissible}
if $A_K^{t,\slPhi}$ is multiplicative on~$\fraktur{T}$.
We say that $A_K^{t,\slPhi}$ or simply $A$ is \notion{admissible}
if $A_K^{t,\slPhi}$ is $\AO(N)$-admissible.
We say that $A_K^{t,\slPhi}$ or simply $A$ is \notion{degree-compatible}
if $A_K^{t,\slPhi}$ is $\DCO(N)$-admissible.
\end{dfn}

\begin{exa}\label{weyl}
In the terminology of \cite{Kan}, every $K$-algebra that is of solvable type with
respect to all admissible orderings is admissible.
This follows indeed from ${\text{\cite[1.5]{Kan}}}$.

For instance, if $K$ has characteristic~$0$, then every Weyl algebra $W$ over $K$
is isomorphic as a $K$-module to
a commutative polynomial ring over $K$, see  \cite[I.2.1]{Cou}, and
$W$ clearly fulfills the axioms \cite[1.2]{Kan}
of an algebra of solvable type for all admissible orderings on its canonical $K$-basis, 
so that $W$ is multiplicative on these orderings by \cite[1.5]{Kan}.
\end{exa}

\begin{exa}\label{U(L)}
If $K$ has characteristic~$0$,
then the universal enveloping algebra $U(\mathfrak{g})$ of any Lie algebra $\mathfrak{g}$
of finite length over $K$ is degree-compatible.
Indeed, let $X=\{x_1,\ldots,x_r\}$ be a finite $K$-basis of $\mathfrak{g}$.
By the Poincar\'e--Birkhoff--Witt Theorem, see 
 2.13, 2.14, 2.22 of \cite[{II}]{Maz},
there exist then a canonical $K$-module monomorphism $h:\mathfrak{g}\hookrightarrow U(\mathfrak{g})$
and a count\-able $K$-basis
$Y={\{y_1^{\nu_1}\cdots y_r^{\nu_r}\mid(\nu_1,\ldots,\nu_r)\in\NN_0^r\}}$ of $U(\mathfrak{g})$
with $y_i=h(x_i)$ such that $[y_j,y_k]=\sum_{1\le i\le r}c_{ijk}y_i$ for some
${c_{ijk}\in K}$.
Thus,  $U(\mathfrak{g})$ is isomorphic as a $K$-module to the commutative polynomial ring
$K[X_1,\ldots X_r]$ by an isomorphism that maps $y_i$ to $X_i$,
and %, on the other hand,
the relations $y_ky_j=y_jy_k-\sum_{1\le i\le r}c_{ijk}y_i$ imply by
\cite[1.2 \& 1.5]{Kan} that $U(\mathfrak{g})$ is multiplicative on $\DCO(Y)$.
\end{exa}

\begin{thm}\label{cocco}
Let $\fraktur{T}\!\subseteq\!\AO(N)$ be a closed subset.
Assume that $A_K^{t,\slPhi}$ is $\fraktur{T}$-admissible.
Let $L$ be a left ideal of~$A$.
Then $\TTOO_{\fraktur{T}}(L)$ is finite,
that is, $L$ admits only finitely many distinct leading monomial ideals from~$\fraktur{T}$.
In particular, if $A_K^{t,\slPhi}$ is admissible, then the non\-empty set $\TTOO_{\AO(N)}(L)$
is finite.
\end{thm}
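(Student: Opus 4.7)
The plan is to recognize that Theorem \ref{cocco} is essentially an assembly of results already in place, with $\AO(N)$ providing the right ambient class and the $\fraktur{T}$-admissibility hypothesis supplying exactly the multiplicativity needed to force leading monomial ideals to be minimal.

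First I would observe that by Proposition \ref{cox} one has $\AO(N) = \WO(N) \cap \CO(N)$, so the closed subset $\fraktur{T} \subseteq \AO(N)$ is in particular a subset of $\WO(N)$, and by Proposition \ref{AO(N) compact} it is also closed in $\TO(N)$. This places us in the exact setting of Corollary \ref{fini3}: $\fraktur{T}$ is closed in $\TO(N)$, consists of well-orderings, and $A_K^{t,\slPhi}$ is multiplicative on $\fraktur{T}$ by the $\fraktur{T}$-admissibility hypothesis (Definition \ref{dd}).

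Applying Corollary \ref{fini3} to the left ideal $L$ then gives both the equality $\TTOO_\fraktur{T}(L) = \LLOO_\fraktur{T}(L)$ and the finiteness of this set. Alternatively, one can unwind the argument: \ref{lollo} (which itself uses the generalized Macaulay Basis Theorem \ref{macaulay}) shows that under the multiplicativity assumption every element of $\fraktur{T}$ is a minimalizer of $L$, so $\TTOO_\fraktur{T}(L) = \LLOO_\fraktur{T}(L)$, and then Theorem \ref{fini2} applied to the closed set $\fraktur{T}$ yields finiteness of $\LLOO_\fraktur{T}(L)$.

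For the final ``in particular'' statement, take $\fraktur{T} = \AO(N)$. By Proposition \ref{AO(N) compact} this is closed in $\TO(N)$, and by definition of admissibility (Definition \ref{dd}) $A$ is multiplicative on $\AO(N)$, so the first part applies. Non-emptiness of $\TTOO_{\AO(N)}(L)$ follows from the existence of at least one admissible ordering, for instance the lexicographic ordering of Example \ref{exa ao} transported to $N$ via $\phi^{-1}$. There is no real obstacle here; the work has all been done in Sections \ref{chp1sec2}, \ref{mact}, and \ref{sao}, and Theorem \ref{cocco} is their immediate synthesis.
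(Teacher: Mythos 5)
Your proposal is correct and coincides with the paper's own proof, which cites precisely \ref{fini3}, \ref{AO(N) compact}, and \ref{cox} for the main claim and \ref{homeo} together with \ref{exa ao} (and \ref{exa ao 2}, \ref{weyl}) for the non-emptiness in the ``in particular'' part. Your extra remark unwinding \ref{fini3} into \ref{lollo}/\ref{macaulay} plus \ref{fini2} is exactly how the paper itself derives that corollary.
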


\begin{proof}
It is all clear by \ref{fini3}, \ref{AO(N) compact}, \ref{cox},
and by \ref{homeo}, \ref{exa ao}, \ref{exa ao 2}, \ref{weyl}.
\end{proof}

\begin{rem}
Notice that by \ref{dco endlich}
we already know this result for subspaces $\fraktur{T}$ of $\DCO(N)$
without having to assume that $A$ be multiplicative on $\fraktur{T}$
nor that $L$ be a left ideal.
\end{rem}

%\vskip\baselineskip
\section{Gr\"obner bases}\label{GB}

\noindent
We keep the notation of the previous section.

\begin{dfn}
Let $A_K^{t,\slPhi}$ be an algebra of countable type,
 $L$ be a left ideal of $A$,
 $N$ denote the canonical $K$-basis of $A_K^{t,\slPhi}$,
and  ${\preceq}$ be a total ordering on $N$. %${\preceq}\in\TO(N)$.
A \notion{Gr\"obner basis of $L$ with respect to~${\preceq}$}
is a finite subset $G$ of $L$
such that $L=\sum_{g\in G}Ag$ and $\LM_\preceq(L)=\LM_\preceq(G)$.
\end{dfn}

\begin{rem}
The definition of Gr\"obner basis given here is equivalent to the one given in
\cite{Kan} if one restricts to admissible orderings and algebras of solvable type,
see \cite[3.8]{Kan}.

This definition is also equivalent to the one given in \cite{Sai} 
when further restricting to Weyl algebras.

By ${\text{\cite[II.4.2]{Hui}}}$ it is less general than the one given
in \cite[II.3.2(ii)]{Hui},
but it is equivalent to the definition given in \cite[III.1.1]{Hui} when restricting
to admissible orderings
and free $K$-algebras $K\langle X_\lambda\mid\lambda\in\slLambda\rangle$,
$\slLambda$ any index set.
\end{rem}

\begin{dfn}
Let $A_K^{t,\slPhi}$ be an algebra of countable type, 
let $L$ be a left ideal of $A$,
and let $N$ denote the canonical $K$-basis of $A_K^{t,\slPhi}$.

Given any $\fraktur{T}\subseteq\TO(N)$,
we say that a finite subset $U$ of $L$ is a 
\notion{$\fraktur{T}$-universal Gr\"obner basis of $L$}
if $U$ is a Gr\"obner basis of $L$ with respect to all elements of~$\fraktur{T}$.

In the following we call the $\mathfrak{T}$-universal Gr\"obner bases 
in $\mathfrak{T}$-admissible algebras simply \notion{universal Gr\"obner bases}.
\end{dfn}

\noindent
We fix here an algebra $A_K^{t,\slPhi}$ of countable type
and as usually denote its canonical $K$-basis by $N$.

\begin{thm}\label{GB exist 1}
Assume that $A$ is left noetherian,
let $L$ be a left ideal of $A$,
and let ${\preceq}$ be a total ordering on $N$.
Then $L$ admits a Gr\"obner basis with respect to~${\preceq}$.
\end{thm}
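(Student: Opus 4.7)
The plan is to construct a Gröbner basis of $L$ with respect to ${\preceq}$ by taking the union of two finite generating systems of different kinds. First, since $A$ is left noetherian, the left ideal $L$ is itself finitely generated as a left $A$-module, so I can choose $f_1,\ldots,f_m\in L$ with $L=\sum_{j=1}^{m}Af_j$. These elements guarantee the ideal-generation condition.

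Second, by the definition recalled in \ref{Supp2}, the object $\LM_{\preceq}(L)$ is the monomial ideal $\langle\LM_{\preceq}(h)\mid h\in L\setminus\{0\}\rangle$ of the polynomial ring $K[X]=K[X_1,\ldots,X_t]$. Since $K[X]$ is noetherian (equivalently, by Dickson's lemma, every monomial ideal of $K[X]$ is generated by finitely many monomials), I can select $h_1,\ldots,h_n\in L\setminus\{0\}$ such that $\LM_{\preceq}(L)=\langle\LM_{\preceq}(h_1),\ldots,\LM_{\preceq}(h_n)\rangle$. These elements will supply the leading-monomial condition.

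Finally I would put $G=\{f_1,\ldots,f_m,h_1,\ldots,h_n\}$. This is a finite subset of $L$. The equality $L=\sum_{g\in G}Ag$ is immediate since the sublist $f_1,\ldots,f_m$ already generates $L$ on the left. For the leading-monomial equality, the inclusion $\LM_{\preceq}(L)\subseteq\LM_{\preceq}(G)$ holds because $\LM_{\preceq}(G)=\langle\LM_{\preceq}(g)\mid g\in G\setminus\{0\}\rangle$ contains every $\LM_{\preceq}(h_i)$, which together already generate $\LM_{\preceq}(L)$; the reverse inclusion $\LM_{\preceq}(G)\subseteq\LM_{\preceq}(L)$ is trivial because $G\subseteq L$. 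Hence $\LM_{\preceq}(L)=\LM_{\preceq}(G)$, and $G$ is a Gröbner basis of $L$ with respect to ${\preceq}$.

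There is no real obstacle here; the argument is logically independent of all the topological and compactness apparatus developed earlier in the paper, and relies only on the two noetherianness hypotheses: that of $A$ (to get finite generation of $L$ as a left ideal) and that of $K[X]$ (to get finite generation of the monomial ideal $\LM_{\preceq}(L)$). In particular, no assumption on ${\preceq}$ — neither admissibility, nor well-foundedness, nor multiplicativity of $A$ with respect to ${\preceq}$ — is needed for the mere \emph{existence} of a Gröbner basis; such extra structure enters only when one wants a $\fraktur{T}$-universal basis or a constructive procedure (e.g.\ Buchberger-style) for producing one.
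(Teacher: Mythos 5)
Your proof is correct. It uses exactly the same two ingredients as the paper's proof --- left noetherianity of $A$ to get a finite generating set of $L$, and noetherianity of $K[X]$ (equivalently, Dickson's lemma for monomial ideals) to control the leading monomial side --- but it deploys them differently. The paper argues by contradiction: starting from a finite generating set $F_0$ of $L$, it repeatedly adjoins elements $x_i\in L$ with $\LM_\preceq(x_i)\notin\LM_\preceq(F_{i-1})$ and derives an infinite strictly ascending chain $\LM_\preceq(F_0)\subsetneq\LM_\preceq(F_1)\subsetneq\cdots$ of ideals of $K[X]$, contradicting the ascending chain condition. You instead select, directly and in one step, finitely many $h_1,\ldots,h_n\in L\setminus\{0\}$ whose leading monomials already generate $\LM_\preceq(L)$, and adjoin them to the generators $f_1,\ldots,f_m$; the two defining conditions of a Gr\"obner basis are then verified separately by the two halves of $G$. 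Your version is the more transparent one: it makes explicit that the two conditions in the definition are satisfied for independent reasons, and it makes visibly correct your closing observation that no hypothesis whatsoever on ${\preceq}$ (well-foundedness, compatibility, multiplicativity of $A$) is needed here --- those hypotheses only enter later, e.g.\ in \ref{gen} and \ref{qwert1}, when one wants generation of $L$ to \emph{follow from} the leading-monomial condition or wants to transport Gr\"obner bases between orderings. The paper's incremental formulation, on the other hand, is closer in spirit to an algorithm (completion by adding elements with new leading monomials), which is presumably why it was phrased that way.
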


\begin{proof}
Suppose that $L$ admits no Gr\"obner basis with respect to ${\preceq}$.
Since $A$ is left noetherian, there exists a finite subset $F_0$ of $L$ such that $L=AF_0$.
It holds ${\LT_\preceq(F_0)\subsetneq\LT_\preceq(L)}$
as $F_0$ is not a Gr\"obner basis. % of $L$.
Thus there exists $x_1\in L\wo\{0\}$ with ${\LT_\preceq(x_1)\notin \LT_\preceq(F_0)}$.
Put ${F_1=F_0\cup \{x_1\}}$.
Again %it holds
${\LT_\preceq(F_1)\subsetneq\LT_\preceq(L)}$
as $F_1$ is not a Gr\"obner basis. % of $L$.
Thus there exists $x_2\in L\wo \{0\}$ with ${\LT_\preceq(x_2)\notin \LT_\preceq(F_1)}$.
Put ${F_2=F_1\cup \{x_2\}}$.
Again %it holds
${\LT_\preceq(F_2)\subsetneq\LT_\preceq(L)}$
as $F_2$ is not a Gr\"obner basis$\ldots$ % of $L$.
We construct in this way an infinite chain
$\LT_\preceq(F_0)\subsetneq \LT_\preceq(F_1)\subsetneq \LT_\preceq(F_2)\subsetneq\ldots$~of
ideals of $K[X]$, in contradiction to the noetherianity of $K[X]$.
\end{proof}

\begin{thm}\label{gen}
Assume that there exists 
${\preceq}\in\WO(N)$ with the property that $A_K^{t,\slPhi}$ is multiplicative
on~$\{\preceq\}$.
Let $L$ be a left ideal of $A$ and $F$ be a finite subset of $L$ such that
$\LT_{\preceq}(L)=\LT_{\preceq}(F)$.
Then $L=\sum_{f\in F}Af$. %, that is, $F$ is a set of generators of $L$.
\end{thm}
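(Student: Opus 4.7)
The plan is to imitate the standard reduction argument from classical Gröbner theory. Set $L' = \sum_{f \in F} Af$; this is a left ideal of $A$ contained in $L$, and the only thing to prove is the reverse inclusion. I would argue by contradiction: suppose $L \setminus L' \neq \emptyset$. Since $0 \in L'$, every element of $L \setminus L'$ is nonzero. Because $\preceq$ is a well-ordering on $N$, the non\-empty subset $\{\lm_\preceq(x) \mid x \in L \setminus L'\}$ of $N$ admits a minimum $\lm_\preceq(x_0)$ for some $x_0 \in L \setminus L'$; this is the witness I would reduce.

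Next I would exploit the equality $\LM_\preceq(L) = \LM_\preceq(F)$ to produce a reducer. Since $\LM_\preceq(x_0) \in \LM_\preceq(L) = \LM_\preceq(F)$, Remark \ref{cox242} yields $f \in F \setminus \{0\}$ such that $\LM_\preceq(f)$ divides $\LM_\preceq(x_0)$ in $K[X]$. Let $n \in N$ be the unique basis element with $\slPhi(n) \cdot \LM_\preceq(f) = \LM_\preceq(x_0)$. Because $n, f \in A \setminus \{0\}$ and $A_K^{t,\slPhi}$ is $\{\preceq\}$-multiplicative, $A$ is a domain, so $nf \neq 0$, and
\begin{equation*}
\LM_\preceq(nf) = \LM_\preceq(n)\, \LM_\preceq(f) = \slPhi(n) \LM_\preceq(f) = \LM_\preceq(x_0),
\end{equation*}
where $\LM_\preceq(n) = \slPhi(n)$ because the canonical form of $n \in N$ is simply $1 \cdot n$, so $\lm_\preceq(n) = n$.

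Finally, I would set $\alpha = \lc_\preceq(x_0)\, \lc_\preceq(nf)^{-1} \in K \setminus \{0\}$ and consider $y = x_0 - \alpha\, nf$. Clearly $y \in L$, and if $y$ were in $L'$ then $x_0 = y + \alpha\, nf$ would lie in $L'$; hence $y \in L \setminus L'$ and in particular $y \neq 0$. By the choice of $\alpha$ the coefficient of $\lm_\preceq(x_0) = \lm_\preceq(nf)$ in $y$ vanishes, while every element of $\Supp(y) \subseteq \Supp(x_0) \cup \Supp(nf)$ distinct from $\lm_\preceq(x_0)$ is strictly $\prec$-below $\lm_\preceq(x_0)$. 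Therefore $\lm_\preceq(y) \prec \lm_\preceq(x_0)$, contradicting the minimality of $\lm_\preceq(x_0)$ and completing the descent.

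The only delicate step is the middle one: one must translate divisibility of monomials in $K[X]$ into an honest left-multiplication inside $A$ whose leading monomial is still $\LM_\preceq(x_0)$. This is exactly what $\{\preceq\}$-multiplicativity of $A_K^{t,\slPhi}$ is designed to provide, and it is the only place where both the multiplicativity hypothesis and the domain property are used. Without them the descent collapses, since there would be no way to guarantee that the reducer $nf$ has the correct leading monomial or even that it is nonzero.
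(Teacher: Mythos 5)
Your proposal is correct and follows essentially the same reduction-by-descent argument as the paper: pick a counterexample with $\preceq$-minimal leading monomial, use $\LM_\preceq(L)=\LM_\preceq(F)$ and the monomial-ideal divisibility criterion to find $n\in N$ and $f\in F$ with $\LM_\preceq(nf)=\LM_\preceq(x_0)$ (this is where the domain and $\{\preceq\}$-multiplicativity hypotheses enter, exactly as you note), and subtract to contradict minimality. The only cosmetic difference is that the paper transports the well-ordering to $M$ via $\phi$ and extracts the divisor from a polynomial combination $\sum p_f\LT_\preceq(f)$ rather than citing the divisibility remark directly.
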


\begin{proof}
Trivially, we have $\sum_{f\in F}Af\subseteq L$.
Suppose that $\sum_{f\in F}Af\subsetneq L$.
Then the set $U=\{\LT_{\preceq}(l)\mid l\in L\wo\sum_{f\in F}Af\}$ is non\-empty.
We have ${\leq}=\phi({\preceq})\in\WO(M)$, and so there exists
$l\in L\wo\sum_{f\in F}Af$ such that $u=\LT_{\preceq}(l)$ is minimal in $U$ with
respect to~${\leq}$.
Since $u\in\LT_{\preceq}(L)=\LT_{\preceq}(F)$, we can write
$u=\sum_{f\in F\wo\{0\}}p_f\LT_{\preceq}(f)$ for some family
$(p_f)_{f\in F\wo\{0\}}$ of polynomials. 
As $u\in M$ and $M$ is a $K$-basis of $K[X]$,
we find
$m\in \bigcup_{f\in F\wo\{0\}}\Supp(p_f)\subseteq M$ and $g\in F\wo\{0\}$
such that ${u=m\LT_{\preceq}(g)}$. 
Put $n=\slPhi^{-1}(m)$.
As $n\in N$, clearly $n\ne 0$.
Since $A$ is a domain, it follows $ng\ne 0$.
Now put $\smash{h=l-\lc_{\preceq}(l)\lc_{\preceq}(ng)^{-1}ng}$.
Then $h\in L\wo\sum_{f\in F}Af$, thus $\LT_{\preceq}(h)\in U$.
On the other hand,
$\LT_{\preceq}(ng)=\LT_{\preceq}(n)\LT_{\preceq}(g)=m\LT_{\preceq}(g)=u=\LT_{\preceq}(l)$,
so that $\LT_{\preceq}(h)<\LT_{\preceq}(l)$, a contradiction.
\end{proof}

\begin{cor}\label{cucu}
Assume that there exists ${\preceq}\in\WO(N)$ such that $A_K^{t,\slPhi}$ is multiplicative
on~$\{\preceq\}$.
Then $A$ is left noetherian.
\end{cor}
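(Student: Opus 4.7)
The plan is to pass a noetherianity statement for $A$ through the $K$-module isomorphism $\slPhi$ using the standard Hilbert basis theorem for $K[X]$ together with Theorem \ref{gen}.

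First I would fix an arbitrary left ideal $L$ of $A$ and consider $\LM_\preceq(L)$, which by definition is an ideal of $K[X]$. Since $K[X]$ is noetherian by the Hilbert basis theorem, $\LM_\preceq(L)$ is a finitely generated ideal of $K[X]$. Because $\LM_\preceq(L)=\langle\LM_\preceq(l)\mid l\in L\setminus\{0\}\rangle$, the usual extraction argument lets me pick finitely many $l_1,\ldots,l_k\in L\setminus\{0\}$ such that already $\LM_\preceq(L)=\langle\LM_\preceq(l_1),\ldots,\LM_\preceq(l_k)\rangle$; indeed, if $g_1,\ldots,g_n$ is any finite generating set of $\LM_\preceq(L)$, each $g_j$ lies in the ideal generated by finitely many of the $\LM_\preceq(l)$, so the union of these choices over $j$ works.

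Next I set $F=\{l_1,\ldots,l_k\}$, a finite subset of $L$ with $\LM_\preceq(F)=\LM_\preceq(L)$. Since by hypothesis ${\preceq}\in\WO(N)$ and $A_K^{t,\slPhi}$ is multiplicative on $\{\preceq\}$, Theorem \ref{gen} applies and delivers the equality $L=\sum_{f\in F}Af$. Thus $L$ is finitely generated as a left $A$-module, and since $L$ was arbitrary, $A$ is left noetherian.

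There is no real obstacle here — both ingredients (Hilbert basis for $K[X]$ and Theorem \ref{gen}) are already available — the only point requiring a moment of care is the passage from "$\LM_\preceq(L)$ is finitely generated as an ideal of $K[X]$" to "$\LM_\preceq(L)=\LM_\preceq(F)$ for a finite $F\subseteq L$", which is resolved by the extraction argument above. The multiplicativity hypothesis is not used directly in this proof but is crucial because it is the hypothesis under which Theorem \ref{gen} holds.
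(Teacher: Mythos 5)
Your proposal is correct and follows exactly the paper's own argument: extract a finite $F\subseteq L$ with $\LM_\preceq(F)=\LM_\preceq(L)$ from the noetherianity of $K[X]$, then invoke Theorem \ref{gen} to conclude that $F$ generates $L$. The paper merely states the extraction step without the explicit detail you supply.
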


\begin{proof}
Let $L$ be a left ideal of $A$.
As $K[X]$ is noetherian, %similarly as in the proof of \ref{GB exist 1},
we find a finite subset $F$ of $L$ such that $\LM_\preceq(F)=\LM_\preceq(L)$.
By \ref{gen}, $F$ is a generating set of $L$.
Thus every left ideal of $A$ is finitely generated. %, and $A$ is left noetherian.
\end{proof}

\begin{cor}\label{esistenza}
Assume that there exists ${\preceq}\in\WO(N)$ such that $A_K^{t,\slPhi}$ is multiplicative
on $\{\preceq\}$.
Then for each left ideal $L$ of $A$ and each total ordering ${\preceq'}$ on $N$ there exists
a Gr\"obner basis of $L$ with respect to ${\preceq'}$.
\end{cor}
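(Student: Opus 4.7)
The plan is to reduce directly to the two immediately preceding results, which together already contain everything needed. First I would note that the hypothesis of \ref{esistenza} is exactly the hypothesis of \ref{cucu}, so $A$ is left noetherian. Next, given any left ideal $L$ of $A$ and any total ordering ${\preceq'}\in\TO(N)$, I would apply \ref{GB exist 1} to obtain a Gr\"obner basis of $L$ with respect to~${\preceq'}$.

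What makes this immediate is the crucial observation that \ref{GB exist 1} places no restriction whatsoever on ${\preceq'}$: it need not be a well-ordering, compatible, or admissible. The proof of \ref{GB exist 1} only exploits noetherianity of the commutative polynomial ring $K[X]$ to rule out the strictly ascending chain $\LT_{\preceq'}(F_0)\subsetneq\LT_{\preceq'}(F_1)\subsetneq\ldots$ of monomial ideals. Therefore the only real input from the hypothesis of \ref{esistenza} is the existence of some well-ordering on $N$ with respect to which $A_K^{t,\slPhi}$ is multiplicative, which we use once and for all to invoke \ref{cucu}.

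There is essentially no obstacle to overcome here; the work has been front-loaded into \ref{gen}, \ref{cucu}, and \ref{GB exist 1}. One could equivalently sketch the combined argument directly: pick any ${\preceq}\in\WO(N)$ as in the hypothesis, apply \ref{gen} to produce finite generating sets from $\LM_\preceq$-matching finite subsets, deduce left noetherianity of $A$, then for the arbitrary target ordering ${\preceq'}$ use noetherianity of $A$ to pick a finite generating set $F$ of $L$ and noetherianity of $K[X]$ to pick finitely many $l_1,\ldots,l_n\in L$ with $\LM_{\preceq'}(L)=\langle\LM_{\preceq'}(l_i)\rangle$, and observe that $G=F\cup\{l_1,\ldots,l_n\}$ is a Gr\"obner basis of $L$ with respect to~${\preceq'}$. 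The cleanest writeup, however, is just the two-line citation of \ref{cucu} followed by \ref{GB exist 1}.
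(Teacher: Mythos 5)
Your proposal is correct and is exactly the paper's own argument: the paper's proof of \ref{esistenza} reads ``Clear by \ref{GB exist 1} and \ref{cucu}'', i.e.\ left noetherianity of $A$ from \ref{cucu} followed by the unrestricted existence statement \ref{GB exist 1}. Your additional direct sketch is also sound, but the two-line citation is precisely what the paper does.
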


\begin{proof}
Clear by \ref{GB exist 1} and \ref{cucu}.
\end{proof}

% \noindent
% Here a division algorithm.

% \begin{thm}
% Assume that there exists ${\preceq}\in\WO(N)$ such that $A_K^{t,\slPhi}$ is multiplicative
% on $\{\preceq\}$.
% Let $a\in A$, let $F\subseteq L$ be finite, and put ${\leq}=\phi({\preceq})$.
% Then there exist $r\in A$ and $(q_f)\in A^{\oplus F}$ such that:
% {\upshape (a)}~$a=\sum_{f\in F}q_f+r$,
% {\upshape (b)}~$\forall\,f\in F\wo\{0\}\:\forall\,s\in\Supp(r) : \LT_\preceq(f)\nmid\slPhi(s)$,
% {\upshape (c)}~$a\ne 0\Rightarrow
% (\forall\,f\in F\wo\{0\}: q_f f\ne 0\Rightarrow \LT_\preceq(q_f f)\leq\LT_\preceq(a))$.
% \end{thm}

% \begin{proof}
% ...
% \end{proof}

%\vskip\baselineskip
\section{Universal Gr\"obner bases in admissible algebras}

\noindent
We keep the notation of the previous section.

\begin{lem}\label{qwert1}
Let ${\preceq},{\preceq'}\in\WO(N)$ such that
$A_K^{t,\slPhi}$ is multiplicative on $\{{\preceq},{\preceq'}\}$.
Let $L$ be a left ideal of $A$ and  $G$ be a Gr\"obner basis of $L$ with respect
to~${\preceq}$.
If ${\preceq}$ and ${\preceq'}$ agree on $\Supp(G)$,
then $\LM_{\preceq}(L)=\LM_{\preceq'}(L)$
and $G$ is a Gr\"obner basis of $L$ with respect to~${\preceq'}$.
\end{lem}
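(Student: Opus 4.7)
The plan is to reduce the statement to Corollary \ref{lollo}, which supplies the crucial minimality property for leading monomial ideals with respect to well-orderings under the multiplicativity hypothesis.

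First I would observe that since ${\preceq}$ and ${\preceq'}$ agree on $\Supp(G)$, in particular they agree on each $\Supp(g)$ for $g\in G$, so $\lm_{\preceq}(g)=\lm_{\preceq'}(g)$ for every $g\in G$ (this is the obvious analogue for $A$ of the remark \ref{LT=LT2}, which transfers via the homeomorphism $\phi$ as in \ref{parapapunzi}). Consequently
\begin{equation*}
\LM_{\preceq}(G)=\langle\LM_{\preceq}(g)\mid g\in G\wo\{0\}\rangle=\langle\LM_{\preceq'}(g)\mid g\in G\wo\{0\}\rangle=\LM_{\preceq'}(G).
\end{equation*}

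Next I would use the Gr\"obner basis hypothesis: $\LM_{\preceq}(L)=\LM_{\preceq}(G)$. Since $G\subseteq L$, the inclusion $\LM_{\preceq'}(G)\subseteq\LM_{\preceq'}(L)$ is automatic. Combining the two displays gives
\begin{equation*}
\LM_{\preceq}(L)=\LM_{\preceq}(G)=\LM_{\preceq'}(G)\subseteq\LM_{\preceq'}(L).
\end{equation*}
Here is where the main work is done: I invoke Corollary \ref{lollo}, whose hypotheses are exactly met (both ${\preceq}$ and ${\preceq'}$ lie in $\WO(N)$, and $A_K^{t,\slPhi}$ is multiplicative on $\{{\preceq},{\preceq'}\}$), to conclude $\LM_{\preceq}(L)=\LM_{\preceq'}(L)$. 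Substituting back, $\LM_{\preceq'}(L)=\LM_{\preceq'}(G)$.

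Finally, to conclude that $G$ is a Gr\"obner basis of $L$ with respect to ${\preceq'}$, I still need $L=\sum_{g\in G}Ag$; but this generation property is independent of the ordering and already holds by the assumption that $G$ is a Gr\"obner basis with respect to ${\preceq}$. The only potential obstacle is verifying the applicability of \ref{lollo}, but the hypotheses are granted verbatim, so no further work is required.
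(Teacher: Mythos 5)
Your proposal is correct and follows essentially the same route as the paper: agreement on $\Supp(G)$ yields $\LM_{\preceq}(G)=\LM_{\preceq'}(G)$, hence $\LM_{\preceq}(L)\subseteq\LM_{\preceq'}(L)$, and the Macaulay-type minimality result upgrades this to equality. The only cosmetic difference is that you cite Corollary \ref{lollo} directly, whereas the paper detours through Corollary \ref{fini3} (itself a consequence of \ref{lollo}) after noting that $\{{\preceq},{\preceq'}\}$ is closed; your shortcut is, if anything, slightly cleaner.
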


\begin{proof}
Because ${\preceq}$ and ${\preceq'}$ agree on $\Supp(G)$,
it follows that $\phi({\preceq})$ and $\phi(\preceq')$ agree
on $\slPhi(\Supp(G))=\Supp(\slPhi(G))$.
Hence
$\LM_{\phi({\preceq})}(\slPhi(G))=\LM_{\phi(\preceq')}(\slPhi(G))$
by \ref{LT=LT2}.
From \ref{parapapunzi} it follows
$\LM_\preceq(L)=\LM_{\preceq}(G)=\LM_{\preceq'}(G)\subseteq\LM_{\preceq'}(L)$.
As $\TO(N)$ is a Hausdorff space, see \ref{U=N},
points are closed, 
so $\{{\preceq},{\preceq'}\}$ is closed in $\TO(N)$.
Thus $\TTOO_{\{{\preceq},{\preceq'}\}}(L)=\LLOO_{\{{\preceq},{\preceq'}\}}(L)$ by \ref{fini3},
and hence $\LM_\preceq(L)=\LM_{\preceq'}(L)$,
and therefore $\LM_{\preceq'}(G)=\LM_{\preceq'}(L)$.
\end{proof}

\begin{lem}\label{aperto1}
Let $\fraktur{T}\subseteq\WO(N)$ such that $A_K^{t,\slPhi}$ is multiplicative
on~$\mathfrak{T}$.
Let $L$ be a left ideal of $A$ and let $F$ be a finite subset of $L$.
Then the set $\mathfrak{U}_{L}(F)$ of all ${\preceq}\in\fraktur{T}$ such
that $F$ is a Gr\"obner basis of $L$ with respect to~${\preceq}$ is open in~$\fraktur{T}$.
\end{lem}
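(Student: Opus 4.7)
The plan is to show that $\mathfrak{U}_L(F)$ is open by exhibiting for each ${\preceq_0}\in\mathfrak{U}_L(F)$ an open neighbourhood of ${\preceq_0}$ in $\mathfrak{T}$ that is entirely contained in $\mathfrak{U}_L(F)$. The underlying intuition is that ``being a Gr\"obner basis'' is a local condition: by Lemma \ref{qwert1}, it only depends on the restriction of $\preceq$ to the finite set $\Supp(F)$, so orderings sufficiently close to $\preceq_0$ in the metric sense must again make $F$ a Gr\"obner basis.

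Concretely, I would first fix a filtration $(S_i)_{i\in\NN_0}$ of $N$ by finite subsets (such a filtration exists because $N$ is countable, being the canonical $K$-basis of an algebra of countable type). Since $F$ is finite, so is $\Supp(F)$, and I can therefore choose $r\in\NN_0$ with $\Supp(F)\subseteq S_{r+1}$. Then the set $\fraktur{N}_r({\preceq_0})\cap\fraktur{T}$ is an open neighbourhood of $\preceq_0$ in $\fraktur{T}$ by Theorem \ref{U=N}.

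Next, I would verify that this neighbourhood is contained in $\mathfrak{U}_L(F)$. Let ${\preceq}\in\fraktur{N}_r({\preceq_0})\cap\fraktur{T}$. By definition of the metric $d_{\mathbf{S}}$, the orderings $\preceq_0$ and $\preceq$ agree on $S_{r+1}$ and therefore on $\Supp(F)$. Since both orderings lie in $\fraktur{T}\subseteq\WO(N)$ and $A_K^{t,\slPhi}$ is multiplicative on $\{\preceq_0,\preceq\}\subseteq\fraktur{T}$, Lemma \ref{qwert1} applied to the Gr\"obner basis $G=F$ of $L$ with respect to $\preceq_0$ yields that $F$ is also a Gr\"obner basis of $L$ with respect to $\preceq$, i.e.\ ${\preceq}\in\mathfrak{U}_L(F)$.

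The argument is structurally very short, with no real obstacle: the non\-trivial input has already been isolated in Lemma \ref{qwert1}, which packages the multiplicativity and well\-ordering hypotheses needed to deduce $\LM_{\preceq_0}(L)=\LM_\preceq(L)$ from the agreement on $\Supp(G)$. The only subtlety to be careful about is making sure to use that $N$ is countable, so that the subbasic topology $\mathcal{U}(N)$ coincides with the metric topology $\mathcal{N}(N)$ of \ref{U=N}, which is what allows us to use the balls $\fraktur{N}_r({\preceq_0})$ as a neighbourhood basis at $\preceq_0$.
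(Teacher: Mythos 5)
Your proof is correct and follows essentially the same route as the paper's: fix a filtration by finite sets, choose $r$ with $\Supp(F)\subseteq S_{r+1}$, and use Lemma \ref{qwert1} to see that every ordering in $\fraktur{N}_r({\preceq_0})\cap\fraktur{T}$ again makes $F$ a Gr\"obner basis. Nothing further to add.
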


\begin{proof}
Let $(S_i)_{i\in\NN_0}$ be a filtration of $N$ consisting of finite sets $S_i$.
There exists $r\in\NN_0$ such that the finite subset $\Supp(F)$ of $N$ lies in~$S_{r+1}$.
We may assume that $\mathfrak{U}_{L}(F)\ne\emptyset$, so that $\mathfrak{T}\ne\emptyset$.
Let ${\preceq}\in\mathfrak{U}_{L}(F)$.
Thus $F$ is a Gr\"obner basis of $L$ with respect to~${\preceq}$.
Consider the open neighbourhood $\mathfrak{N}_r({\preceq})\cap\fraktur{T}$ of
${\preceq}$ in $\fraktur{T}$ and let ${\preceq'}\in\mathfrak{N}_r({\preceq})\cap\fraktur{T}$.
Then ${\preceq}$ and ${\preceq'}$ agree on $S_{r+1}$ and in particular on $\Supp(F)$.
By \ref{qwert1}, $F$ is a Gr\"obner basis of $L$ with respect to~${\preceq'}$,
that is, ${\preceq'}\in\mathfrak{U}_{L}(F)$.
Hence ${\preceq}\in\mathfrak{N}_r({\preceq})\cap\fraktur{T}\subseteq\mathfrak{U}_{L}(F)$,
and $\mathfrak{U}_{L}(F)$ is open in~$\fraktur{T}$.
\end{proof}

\begin{rem}\label{ricoprimento1}
Let $\emptyset\ne\mathfrak{T}\subseteq\WO(N)$ 
such that $A_K^{t,\slPhi}$ is multiplicative on~$\mathfrak{T}$.
Let $L$ be a left ideal of $A$.
Then, by \ref{esistenza}, for each ${\preceq}\in\fraktur{T}$
there exists a Gr\"obner basis $G_{\preceq}$ of $L$ with respect to~${\preceq}$.
Thus, in the notation of \ref{aperto1},
clearly ${\preceq}\in\mathfrak{U}_{L}(G_{\preceq})$
for each ${\preceq}\in\mathfrak{T}$.
Hence, by \ref{aperto1}, $\bigcup_{{\preceq}\in\fraktur{T}}\mathfrak{U}_{L}(G_\preceq)$
is an open covering of~$\fraktur{T}$.
\end{rem}

\begin{thm}\label{UGB exist 1}
Let $\emptyset\ne\fraktur{T}\subseteq\WO(N)$ such that $\mathfrak{T}$ is closed in $\TO(N)$
and $A_K^{t,\slPhi}$ is multiplicative on~$\fraktur{T}$.
Let $L$ be a left ideal of $A$.
Then $L$ admits a $\fraktur{T}$-universal Gr\"obner basis.
\end{thm}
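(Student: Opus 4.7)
The plan is to exploit compactness of $\mathfrak{T}$ to pass from pointwise existence of Gr\"obner bases to a single finite universal one. The strategy is a standard ``open cover plus compactness'' argument, made possible precisely by the topological framework developed earlier.

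First, for each ${\preceq}\in\mathfrak{T}$, invoke Corollary \ref{esistenza} (whose hypothesis is met since $\mathfrak{T}\subseteq\WO(N)$ is non\-empty and $A_K^{t,\slPhi}$ is multiplicative on~$\mathfrak{T}$) to pick a Gr\"obner basis $G_{\preceq}$ of $L$ with respect to~${\preceq}$. Then ${\preceq}\in\mathfrak{U}_L(G_{\preceq})$, and by Lemma~\ref{aperto1} each $\mathfrak{U}_L(G_{\preceq})$ is open in~$\mathfrak{T}$. As noted in Remark~\ref{ricoprimento1}, the family $(\mathfrak{U}_L(G_{\preceq}))_{\preceq\in\mathfrak{T}}$ is therefore an open covering of~$\mathfrak{T}$.

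Next, use compactness. Since $\TO(N)$ is compact by Theorem~\ref{compact} and $\mathfrak{T}$ is closed in~$\TO(N)$, $\mathfrak{T}$ is compact. Hence the open covering above admits a finite subcovering: there exist ${\preceq_1},\ldots,{\preceq_n}\in\mathfrak{T}$ such that
\begin{equation*}
\mathfrak{T}=\bigcup_{i=1}^{n}\mathfrak{U}_L(G_{\preceq_i}).
\end{equation*}
Set $U=\bigcup_{i=1}^{n}G_{\preceq_i}$. Then $U$ is a finite subset of~$L$.

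Finally, verify that $U$ is a $\mathfrak{T}$-universal Gr\"obner basis of~$L$. Let ${\preceq}\in\mathfrak{T}$; pick $i$ with ${\preceq}\in\mathfrak{U}_L(G_{\preceq_i})$, so that $G_{\preceq_i}$ is itself a Gr\"obner basis of $L$ with respect to~${\preceq}$. Since $G_{\preceq_i}\subseteq U\subseteq L$, the chain $L=\sum_{g\in G_{\preceq_i}}Ag\subseteq\sum_{u\in U}Au\subseteq L$ gives $L=\sum_{u\in U}Au$, and the chain $\LM_\preceq(L)=\LM_\preceq(G_{\preceq_i})\subseteq\LM_\preceq(U)\subseteq\LM_\preceq(L)$ gives $\LM_\preceq(U)=\LM_\preceq(L)$. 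Thus $U$ is a Gr\"obner basis of $L$ with respect to every ${\preceq}\in\mathfrak{T}$, as required. There is no real obstacle here: all the technical work has been isolated in Lemma~\ref{aperto1} (openness, built on \ref{qwert1} and hence on \ref{fini3}) and in Corollary~\ref{esistenza} (existence), so the proof amounts to a one-line compactness argument glueing them together.
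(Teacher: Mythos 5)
Your proposal is correct and follows essentially the same route as the paper: existence of a Gr\"obner basis for each ordering via \ref{esistenza}, openness of the sets $\mathfrak{U}_L(G_{\preceq})$ via \ref{aperto1}, compactness of the closed subset $\mathfrak{T}$ of $\TO(N)$, and a finite subcover whose union of Gr\"obner bases is the desired $\mathfrak{T}$-universal one. Your explicit verification that the union remains a Gr\"obner basis is a detail the paper dismisses with ``of course,'' but the argument is identical.
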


\begin{proof}
In the notation of \ref{ricoprimento1},
$\bigcup_{{\preceq}\in\fraktur{T}}\fraktur{U}_{L}(G_\preceq)$ is an open
covering of~$\mathfrak{T}$, where each $G_\preceq$ is a Gr\"obner basis of $L$ with respect
to~${\preceq}$.
As $\TO(N)$ is compact and $\fraktur{T}$ is closed in $\TO(N)$, %it follows that
$\fraktur{T}$ is compact.
Hence we can find $s\in\NN$ and ${\preceq_1},\ldots,{\preceq_s}\in\fraktur{T}$ such that
$\bigcup_{1\le j\le s}\fraktur{U}_{L}(G_{\preceq_j})$
is a finite open covering of~$\fraktur{T}$.
We claim that $U=\bigcup_{1\le j\le s} G_{\preceq_j}$ is a $\fraktur{T}$-universal Gr\"obner
basis of $L$.
Indeed, let ${\preceq_0}\in\fraktur{T}$.
Then there exists $j\in\{1,\ldots,s\}$ such that
${\preceq_0}\in\fraktur{U}_{L}(G_{\preceq_j})$.
Thus $G_{\preceq_j}$ is a Gr\"obner basis of $L$ with respect to~${\preceq_0}$.
As $G_{\preceq_j}\subseteq U$, 
of course also $U$ is a Gr\"obner basis of $L$ with respect to~${\preceq_0}$.
Since the choice of ${\preceq_0}$ in $\fraktur{T}$ was arbitrary, we conclude that
$U$ is a $\fraktur{T}$-universal Gr\"obner basis of $L$.
\end{proof}

\begin{cor}\label{ao dco ugb}
Let $\mathfrak{T}$ be a non\-empty closed subset of $\AO(N)$ 
such that $A_K^{t,\slPhi}$ is $\mathfrak{T}$-admissible.
Then for each left ideal $L$ of $A$ there exists a $\mathfrak{T}$-universal Gr\"obner basis
of $L$.
In particular, every left ideal of an admissible or degree-compatible algebra has
a universal Gr\"obner basis.
\hfill\qedsymbol
\end{cor}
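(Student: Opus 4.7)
The plan is to reduce this corollary directly to Theorem \ref{UGB exist 1}, which already gives the existence of a $\fraktur{T}$-universal Gr\"obner basis whenever $\emptyset\ne\fraktur{T}\subseteq\WO(N)$ is closed in $\TO(N)$ and $A_K^{t,\slPhi}$ is multiplicative on $\fraktur{T}$. So the entire task is to verify that the hypotheses of \ref{UGB exist 1} are met for the closed subsets of $\AO(N)$ considered here, and then to specialize to the cases $\fraktur{T}=\AO(N)$ and $\fraktur{T}=\DCO(N)$.

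First I would argue that $\fraktur{T}\subseteq\WO(N)$: by Proposition \ref{cox} we have $\AO(N)=\WO(N)\cap\CO(N)$, and since $\fraktur{T}\subseteq\AO(N)$ by assumption, the inclusion into $\WO(N)$ is automatic. Next, although $\fraktur{T}$ is only assumed closed in $\AO(N)$, Proposition \ref{AO(N) compact} says that $\AO(N)$ is itself closed in $\TO(N)$; by transitivity of closedness, $\fraktur{T}$ is closed in $\TO(N)$ as well. The multiplicativity hypothesis of \ref{UGB exist 1} on $\fraktur{T}$ is exactly the definition of $\fraktur{T}$-admissibility (\ref{dd}). Together with the assumption $\fraktur{T}\ne\emptyset$, we may invoke Theorem \ref{UGB exist 1} to produce a $\fraktur{T}$-universal Gr\"obner basis of $L$.

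For the ``in particular'' clause, I would treat the two cases separately but in parallel. If $A$ is admissible, take $\fraktur{T}=\AO(N)$; this set is closed in $\TO(N)$ by \ref{AO(N) compact} and is non-empty because the lexicographic ordering from Example \ref{exa ao} (transferred to $N$ via $\phi^{-1}$ as in \ref{homeo}) provides an element of $\AO(N)$. If $A$ is degree-compatible, take $\fraktur{T}=\DCO(N)$; this set is closed in $\TO(N)$ by \ref{DCO compact} and is non-empty by the construction in Example \ref{DCO exa} (again transferred via $\phi^{-1}$). In both cases the $\fraktur{T}$-admissibility hypothesis holds by the respective definitions in \ref{dd}, so the first part applies.

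There is no real obstacle here, since all the work was done in the preceding sections; the step that requires the most care is purely bookkeeping, namely noting that ``closed in $\AO(N)$'' upgrades to ``closed in $\TO(N)$'' via \ref{AO(N) compact}, so that the hypothesis of \ref{UGB exist 1} is literally satisfied. Once that is observed, the corollary is immediate.
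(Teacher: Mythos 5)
Your proof is correct and follows exactly the route the paper intends (the corollary is stated with an immediate \qedsymbol precisely because it reduces to Theorem \ref{UGB exist 1} via the observations you make: $\fraktur{T}\subseteq\AO(N)=\WO(N)\cap\CO(N)\subseteq\WO(N)$ by \ref{cox}, closedness in $\AO(N)$ upgrades to closedness in $\TO(N)$ by \ref{AO(N) compact}, and $\fraktur{T}$-admissibility is by definition multiplicativity on $\fraktur{T}$). The specialization to $\AO(N)$ and $\DCO(N)$ with the non-emptiness witnesses from \ref{exa ao} and \ref{DCO exa} is likewise what the paper relies on.
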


\begin{rem}\label{denso1}
To effectively compute a $\fraktur{T}$-universal Gr\"obner basis, one should start walking
among the orderings in $\fraktur{T}$ and pick some ones that allow to cover $\fraktur{T}$ as
in \ref{ricoprimento1}.
But how to pluck the right flowers in that vast meadow?
The following Lemma \ref{denso2} \emph{might} be of help.
Once one thinks to have located a suitable kind of orderings,
that is, an appropriate subset $\fraktur{D}$ of $\fraktur{T}$,
if one is able to show that $\fraktur{D}$ is dense in $\fraktur{T}$,
then one can indeed restrict the own search to $\fraktur{D}$.
This fact might be the first step toward the construction of a ``topological algorithm''
that computes a $\fraktur{T}$-universal Gr\"obner basis.
\end{rem}

\begin{lem}\label{denso2}
In the hypotheses of \ref{UGB exist 1}, let $\fraktur{D}$ be a dense subset of~$\fraktur{T}$.
Then we can find finitely many ${\preceq}_1,\ldots,{\preceq}_s$ in $\fraktur{D}$
and respective Gr\"obner bases $G_1,\ldots,G_s$ of $L$
such that $\bigcup_{1\le j\le s}G_{j}$ is a $\fraktur{T}$-universal Gr\"obner basis of~$L$.
\end{lem}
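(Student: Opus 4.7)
The plan is to mimic the compactness argument in the proof of Theorem \ref{UGB exist 1}, but to arrange that the orderings indexing the chosen finite subcover lie in the prescribed dense set $\fraktur{D}$. The density hypothesis will be used exactly once, to transfer from an arbitrary ${\preceq}_0\in\fraktur{T}$ to a nearby ${\preceq}\in\fraktur{D}$ that already shares one of its Gr\"obner bases.

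First I would prove the following auxiliary claim: for every ${\preceq}_0\in\fraktur{T}$ there exists a pair $({\preceq},G)$ with ${\preceq}\in\fraktur{D}$, with $G$ a Gr\"obner basis of $L$ with respect to~${\preceq}$, and with ${\preceq}_0\in\fraktur{U}_L(G)$. Indeed, by Corollary \ref{esistenza} we may pick some Gr\"obner basis $G$ of $L$ with respect to~${\preceq}_0$; by Lemma \ref{aperto1} the set $\fraktur{U}_L(G)$ is then open in $\fraktur{T}$ and contains ${\preceq}_0$; by density of $\fraktur{D}$ in $\fraktur{T}$ the intersection $\fraktur{D}\cap\fraktur{U}_L(G)$ is non\-empty, so one may choose ${\preceq}\in\fraktur{D}\cap\fraktur{U}_L(G)$, and by definition of $\fraktur{U}_L(G)$ the same $G$ is also a Gr\"obner basis of $L$ with respect to this~${\preceq}$.

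Next let $\mathcal{G}$ denote the collection of all pairs $({\preceq},G)$ with ${\preceq}\in\fraktur{D}$ and $G$ a Gr\"obner basis of $L$ with respect to~${\preceq}$. The claim above says that $\{\fraktur{U}_L(G)\mid({\preceq},G)\in\mathcal{G}\}$ is an open cover of $\fraktur{T}$. Since $\TO(N)$ is compact by Theorem \ref{compact} and $\fraktur{T}$ is closed in $\TO(N)$ by hypothesis, $\fraktur{T}$ is compact, and this cover admits a finite subcover indexed by pairs $({\preceq}_1,G_1),\ldots,({\preceq}_s,G_s)\in\mathcal{G}$ with $\fraktur{T}=\bigcup_{1\le j\le s}\fraktur{U}_L(G_j)$.

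Finally I would verify, as in the concluding step of the proof of \ref{UGB exist 1}, that $U=\bigcup_{1\le j\le s}G_j$ is the desired $\fraktur{T}$-universal Gr\"obner basis: given any ${\preceq}_0\in\fraktur{T}$ we pick $j$ with ${\preceq}_0\in\fraktur{U}_L(G_j)$, so $G_j$ is a Gr\"obner basis of $L$ with respect to~${\preceq}_0$, whence $\LM_{{\preceq}_0}(L)=\LM_{{\preceq}_0}(G_j)\subseteq\LM_{{\preceq}_0}(U)\subseteq\LM_{{\preceq}_0}(L)$, and $U\supseteq G_j$ generates $L$ as a left $A$-module. I do not expect a genuine obstacle; the only point requiring mild care is to keep the roles of the arbitrary ordering ${\preceq}_0$ and its dense approximation ${\preceq}\in\fraktur{D}$ clearly distinguished, so that the openness of $\fraktur{U}_L(G)$ supplied by \ref{aperto1} can be combined with density in the correct direction.
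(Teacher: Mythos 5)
Your proof is correct and follows essentially the same route as the paper's: both cover $\fraktur{T}$ by the open sets $\fraktur{U}_L(G)$ supplied by Lemma \ref{aperto1}, use Corollary \ref{esistenza} for the existence of the individual Gr\"obner bases, and extract a finite subcover by compactness of the closed subset $\fraktur{T}$ of $\TO(N)$. The only (immaterial) difference is the order of the two ingredients: the paper first extracts the finite subcover exactly as in Theorem \ref{UGB exist 1} and only afterwards uses density to replace each of the finitely many ${\preceq'_j}$ by some ${\preceq_j}\in\fraktur{D}\cap\fraktur{U}_L(G_j)$, whereas you apply density pointwise at the outset to re-index the whole cover by orderings in $\fraktur{D}$ before invoking compactness.
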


\begin{proof}
Because $\fraktur{T}$ is compact, we can find finitely many 
${\preceq'_1},\ldots,{\preceq'_s}\in\fraktur{T}$ such that
${\fraktur{T}=\bigcup_{1\le j\le s}\fraktur{U}_L(G_j)}$,
where each $G_j$ is a Gr\"obner basis of $L$ with respect to ${\preceq'_j}$.
Then $\bigcup_{1\le j\le s}G_{j}$ is a $\fraktur{T}$-universal Gr\"obner basis of $L$.

Because $\fraktur{D}$ is dense in $\fraktur{T}$
and each $\fraktur{U}_L(G_j)$ is an open  neighbourhood of ${\preceq'_j}$ in $\fraktur{T}$,
for ${1\le j\le s}$ we find ${{\preceq_j}\in \fraktur{D}\cap\fraktur{U}_L(G_j)}$.
Thus each $G_j$ is a Gr\"obner basis of $L$ with respect to~${\preceq_j}$.
\end{proof}

\begin{exa}\label{denso3}
The orderings ${\preceq}$ given by
\begin{equation*}
\slPhi^{-1}(X^\upsilon)\preceq\slPhi^{-1}(X^\nu) \Leftrightarrow
 X^{\slGamma\upsilon} \leq_{\mathrm{lex}} X^{\slGamma\nu}
\end{equation*}
with $\slGamma$  a ${t\times t\text{-}}$matrix with entries in $\NN_0$
constitute a dense subset of $\AO(N)$.
This follows easily from \cite[p.~6]{Ley}.
\end{exa}

% \begin{alg}\label{denso4}
% In the hypotheses of \ref{denso2},
% the algorithm here below computes a $\mathfrak{T}$-universal Gr\"obner basis $U$ of~$L$.

% $U=\emptyset$

% {\bf while not} $\text{isUniversalGroebnerBasis}(L,\mathfrak{T},U)$ {\bf do}

% \ \ \ {\bf choose} ${\preceq}$ {\bf in} $\mathfrak{D}$

% \ \ \ $\mathfrak{D} = \mathfrak{D} \wo \{{\preceq}\}$

% \ \ \ $G = \text{computeGroebnerBasis}(L,{\preceq})$

% \ \ \ $U = U \cup G$

% {\bf od}

% \noindent
% The problem is now to find a halt criterion for this algorithm, that is,
% an implementation of the function $\text{isUniversalGroebnerBasis}()$.
% A partial solution is given in \cite[Lemma~2.1]{Loe}.
% Lemma \ref{denso2} guarantees that the algorithm terminates correctly.

% We do not pretend that this algorithm, once completed, would be efficient, and we 
% leave here the discussion about the algorithmic aspects of universal Gr\"obner bases
% as this topic has not been at all the focus of our work.
%  %
% Goal of this example is only to show that the topological approach treated here,
% despite the nonconstructivity of the proofs,
% can nevertheless lead to an effective computation.
% \end{alg}

\begin{dfn}
Let $(X,d)$ be a metric space and let $\varepsilon\in\RR$ with $\varepsilon>0$.
We say that $Y\subseteq X$ is \notion{$\varepsilon$-dense in $X$}
if for all $x\in X$ there exists $y\in Y$ such that
$d(x,y)<\varepsilon$.
\end{dfn}

\begin{lem}
In the hypotheses of \ref{UGB exist 1},
assume that there exists $r\in\NN_0$ such that for all ${\preceq}\in\fraktur{T}$ and
all Gr\"obner bases $G_\preceq$ of $L$ with respect to ${\preceq}$
and all $g\in G_\preceq$ it holds $\deg(\slPhi(g))\le r$.
Let $\mathbb{S}=(S_i)_{i\in\NN_0}$
be the filtration of $N$ with ${S_i=\slPhi^{-1}(M_{\le i-1})}$.
Let $\fraktur{D}$ be a $\frac{1}{r}$-dense subset of $\,\fraktur{T}$ with respect to
the metric $d_\mathbb{S}\res_\mathfrak{T}$ induced by $\mathbb{S}$.
Then we can find finitely many ${\preceq}_1,\ldots,{\preceq}_s$ in $\fraktur{D}$
and respective Gr\"obner bases $G_1,\ldots,G_s$ of $L$
such that $\bigcup_{1\le j\le s}G_{j}$ is a $\fraktur{T}$-universal Gr\"obner basis of~$L$.
\end{lem}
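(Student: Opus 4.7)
The plan is to mimic the proof of Theorem \ref{UGB exist 1}, but to restrict the covering of $\fraktur{T}$ by open sets $\fraktur{U}_L(G_\preceq)$ to Gr\"obner bases indexed by orderings in the prescribed dense subset $\fraktur{D}$. The key ingredient that lets this restriction work is the uniform degree bound, together with Lemma \ref{qwert1}, which transfers Gr\"obner bases across orderings that agree on the support of the basis.

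First I would extract from the degree hypothesis the crucial containment. If $\preceq\in\fraktur{T}$ and $G_\preceq$ is any Gr\"obner basis of $L$ with respect to $\preceq$, then every $g\in G_\preceq$ satisfies $\deg(\slPhi(g))\le r$, so $\Supp(\slPhi(g))\subseteq M_{\le r}$, whence $\Supp(g)=\slPhi^{-1}(\Supp(\slPhi(g)))\subseteq \slPhi^{-1}(M_{\le r})=S_{r+1}$. Consequently $\Supp(G_\preceq)\subseteq S_{r+1}$ for every choice of Gr\"obner basis, uniformly in $\preceq$.

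Next, given any $\preceq_0\in\fraktur{T}$, I would use the density of $\fraktur{D}$ in $(\fraktur{T},d_{\mathbb{S}}\res_{\fraktur{T}})$ to locate $\preceq\in\fraktur{D}$ whose distance from $\preceq_0$ is small enough to force agreement on $S_{r+1}$; by the ultrametric form of $d_\mathbb{S}$, this translates to $d_\mathbb{S}(\preceq_0,\preceq)\le 2^{-(r+1)}$, and this is exactly what the prescribed density threshold is meant to guarantee. By Corollary \ref{esistenza} (applicable because $\fraktur{T}\subseteq\WO(N)$ and $A$ is multiplicative on $\fraktur{T}$), for each $\preceq\in\fraktur{D}$ there exists a Gr\"obner basis $G_\preceq$ of $L$ with respect to $\preceq$. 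Since $\preceq_0$ and $\preceq$ agree on $S_{r+1}\supseteq\Supp(G_\preceq)$ and both lie in $\fraktur{T}$, Lemma \ref{qwert1} implies that $G_\preceq$ is also a Gr\"obner basis of $L$ with respect to $\preceq_0$, i.e.\ $\preceq_0\in\fraktur{U}_L(G_\preceq)$.

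Finally I would close the argument by compactness, as in Theorem \ref{UGB exist 1}. The open sets $\fraktur{U}_L(G_\preceq)$ for $\preceq\in\fraktur{D}$ (open in $\fraktur{T}$ by Lemma \ref{aperto1}) cover $\fraktur{T}$ by the previous paragraph; since $\fraktur{T}$ is closed in the compact space $\TO(N)$ (Theorem \ref{compact}), it is compact, so finitely many suffice, say indexed by $\preceq_1,\ldots,\preceq_s\in\fraktur{D}$. Then $U=\bigcup_{1\le j\le s}G_{\preceq_j}$ contains each $G_{\preceq_j}$, and since any superset of a Gr\"obner basis inside $L$ is again a Gr\"obner basis, $U$ is a Gr\"obner basis of $L$ with respect to every $\preceq_0\in\fraktur{T}$, hence a $\fraktur{T}$-universal Gr\"obner basis. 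The main point that deserves care, and where I see the only real friction, is matching the density threshold $\tfrac{1}{r}$ against the dyadic values of $d_\mathbb{S}$ so that agreement on the whole of $S_{r+1}$ is truly obtained; once this calibration is in place, everything else is a direct reduction to Lemmas \ref{qwert1} and \ref{aperto1} and the compactness of $\fraktur{T}$.
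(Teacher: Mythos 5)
Your argument is correct and is essentially the paper's proof run in the opposite order. The paper first invokes Theorem \ref{UGB exist 1} to obtain finitely many ${\preceq'_1},\ldots,{\preceq'_s}\in\fraktur{T}$ and Gr\"obner bases $G_j$ whose union $U$ is already $\fraktur{T}$-universal, observes from the degree bound that $\Supp(U)\subseteq S_{r+1}$, and only then uses density to replace each ${\preceq'_j}$ by some ${\preceq_j}\in\fraktur{D}\cap\fraktur{N}_r({\preceq'_j})$, with \ref{qwert1} showing that $G_j$ stays a Gr\"obner basis with respect to ${\preceq_j}$. You instead build the open cover $\{\fraktur{U}_L(G_\preceq)\}_{{\preceq}\in\fraktur{D}}$ of $\fraktur{T}$ directly from $\fraktur{D}$ — using \ref{esistenza} to get the $G_\preceq$, the uniform containment $\Supp(G_\preceq)\subseteq S_{r+1}$, and \ref{qwert1} with the roles of the two orderings exchanged — and then apply compactness via \ref{aperto1}. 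Both versions hinge on exactly the same two facts (the uniform support bound and \ref{qwert1}), so the difference is organizational: you re-prove the compactness step of \ref{UGB exist 1} instead of quoting its conclusion, but you avoid having to perturb a pre-chosen cover.

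The ``friction'' you flag at the end is real and should not be waved away: $\frac{1}{r}$-density only yields $d_\mathbb{S}({\preceq_0},{\preceq})<\frac{1}{r}$, i.e.\ agreement on $S_k$ for the largest $k$ with $2^{-k}<\frac{1}{r}$, which forces only $k>\log_2 r$ — far weaker than the agreement on all of $S_{r+1}$ (equivalently $d_\mathbb{S}\le 2^{-(r+1)}$) that \ref{qwert1} needs. The paper's own proof makes the identical leap when it deduces $\fraktur{D}\cap\fraktur{N}_r({\preceq'_j})\ne\emptyset$ from $\frac{1}{r}$-density, since $\fraktur{N}_r({\preceq'_j})$ is the ball of radius $2^{-r}<\frac{1}{r}$. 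So this is a defect of the lemma's hypothesis rather than of your argument: reading ``$\frac{1}{r}$-dense'' as ``$2^{-r}$-dense'' makes both proofs close, and with that correction your proposal is complete.
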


\begin{proof}
We find $s\in\NN$ and ${\preceq'_1},\ldots,{\preceq'_s}\in\fraktur{T}$ and
%respective Gr\"obner bases
$G_1,\ldots,G_s\subseteq L$
% with respect to ${\preceq'_1},\ldots,{\preceq'_s}$,
% respectively,
such that each $G_j$ is a ${\preceq'_j}$-Gr\"obner basis of $L$ and
$U=\bigcup_{1\le j\le s}G_j$ is a $\fraktur{T}$-universal Gr\"obner basis of~$L$.

It holds $\Supp(U)\subseteq S_{r+1}$.
Because $\fraktur{D}$ is $\frac{1}{r}$-dense in $\fraktur{T}$,
for $1\le j\le s$ there exists ${\preceq_j}\in\fraktur{D}\cap\fraktur{N}_r(\preceq'_j)$.
Since ${\preceq'}_j$ and ${\preceq_j}$ agree on $\Supp(U)$ and hence on $\Supp(G_j)$,
by \ref{qwert1} 
$G_j$ is a Gr\"obner basis of $L$ with respect to~${\preceq_j}$.
\end{proof}

\begin{rem}\label{degree bound}
% In the above notation,
Assume that $A_K^{t,\slPhi}$ is a quadric algebra of solvable type,
this means, 
$\slPhi^{-1}(X_i)\slPhi^{-1}(X_j)=\slPhi^{-1}(X_j)\slPhi^{-1}(X_i)+\slPhi^{-1}(p_{ij})$
for polynomials ${p_{ij}\in K[X]}$ at most of degree $2$.
Assume further that $L$ can be generated by finitely many
elements $x_1,\ldots,x_q$ such that $\deg(\slPhi(x_h))\le d$ for $1\le h\le q$.
As proved in \cite{Ley}, for each ${{\preceq}\in\AO(N)}$
there exists a Gr\"obner basis $G_\preceq$ of $L$ with respect to ${\preceq}$
such that $\deg(\slPhi(g))\le 2(\frac{d^2+2d}{2})_{}^{2^{t-1}}$ for all $g\in G_\preceq$.
Therefore
there exists a $\fraktur{T}$-universal Gr\"obner basis $U$ of $L$ such that
${\deg(\slPhi(u))\le 2(\frac{d^2+2d}{2})_{}^{2^{t-1}}}$ for all $u\in U$,
for one can construct $U$ as a union of (finitely~many) such Gr\"obner bases $G_\preceq$.
\end{rem}

\begin{rem}
An alternative, ``classical'' proof of \ref{ao dco ugb} involves a division and a reduction
algorithm: 
 % in the following ``classical'' way:
\begin{enumerate}[\upshape (i)]\formatenum
\item\label{1}
Assume that $A_K^{t,\slPhi}$ is multiplicative on $\{\preceq\}$ for some ${\preceq}\in\WO(N)$.
Let $a\in A$,  $F\subseteq L$ be finite, and  ${\leq}=\phi({\preceq})$.
Then there exist $r\in A$ and $(q_f)_{f\in F}\in A^{\oplus F}$ such that:
\begin{enumerate}[\upshape (a)]\formatenum
\item
 % (a)~
$a=\sum_{f\in F}q_f+r$, 
\item
 % (b)~
$\forall\,f\in F : (f\ne 0\Rightarrow \forall\,s\in\Supp(r) : \LT_\preceq(f)\nmid\slPhi(s))$, 
\item
 % (c)~
$a\ne 0\Rightarrow
(\forall\,f\in F : (q_f f\ne 0\Rightarrow \LT_\preceq(q_f f)\leq\LT_\preceq(a)))$.
\end{enumerate}
\item\label{2}
Let ${\preceq}\in\AO(N)$ such that $A_K^{t,\slPhi}$ is multiplicative on $\{\preceq\}$.
Let $L$ be a left ideal of $A$.
Let $G$ be a Gr\"obner basis of $L$ with respect to~${\preceq}$.
 % By \eqref{1},
One can then transform $G$ by applying repeatedly the following procedures:
\begin{enumerate}[\upshape (a)]\formatenum
\item
If there exists $g\in G\wo\{0\}$ such that $\LT_\preceq(g)\in\LT_\preceq(G\wo\{g\})$,
then replace $G$ by $G\wo\{g\}$.
\item
If there exist $g\in G\wo\{0\}$ and $n\in\Supp(g)\wo\{\LT_\preceq(g)\}$ such that
$n\in\LT_\preceq(G\wo\{g\})$,
then divide $g$ by $G\wo\{g\}$ as in \eqref{1},
so that it holds ${g=\sum_{f\in G\wo\{g\}}q_f f+r}$,
and replace $G$ by $(\{r\}\cup G)\wo\{g\}$, 
which is equal to $\{r\}\cup (G\wo\{g\})$ in this case.
\end{enumerate}
After finitely many steps both conditions become false, and the process halts with a
\notion{reduced} Gr\"obner basis $G$ of $L$ with respect to ${\preceq}$, that is,
for each $g\in G$ and each $n\in\Supp(g)$ it holds $n\notin\LM_\preceq(G\wo\{g\})$.
\item
Let $\fraktur{T}$ be a closed subset of $\AO(N)$
such that $A_K^{t,\slPhi}$ is $\fraktur{T}$-admissible.
Let $L$ be a left ideal of $A$.
Then there exist at most finitely many leading monomial ideals of $L$
from $\fraktur{T}$, 
thus we find a finite subset $\fraktur{U}$ of $\fraktur{T}$
such that $\TTOO_{\fraktur{U}}(L)=\TTOO_\fraktur{T}(L)$.
For each ${\preceq}\in\fraktur{U}$ we may choose a reduced Gr\"obner basis $G_\preceq$ of $L$
with respect to~${\preceq}$.
Then $\bigcup_{{\preceq}\mathop{\in}\fraktur{U}}G_\preceq$ is a 
$\fraktur{T}$-universal Gr\"obner basis of~$L$.
\end{enumerate}
\end{rem}

%\vskip\baselineskip
\section{Universal Gr\"obner bases from degree orderings}

\noindent
We keep the notation of the previous section.

\begin{lem}\label{aperto2}
%Assume that $A$ is left noetherian,
Let $L$ be a left ideal of $A$,
let $F$ be a finite subset of $L$,
and let $\fraktur{T}$ be a subspace of ${}\DO(N)$.
Then the set $\mathfrak{U}_{L}(F)$ of all ${\preceq}\in\fraktur{T}$ 
such that $F$ is a Gr\"obner basis of $L$ with respect to~${\preceq}$ is open in~$\fraktur{T}$.
\end{lem}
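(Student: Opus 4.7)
The plan is to follow the strategy of Lemma \ref{aperto1}, but to replace the multiplicativity-based tool Lemma \ref{qwert1}, which is unavailable here, by Lemma \ref{llll}, whose conclusion the hypothesis $\fraktur{T}\subseteq\DO(N)$ makes applicable. The key observation is that the condition ``$F$ is a Gr\"obner basis of $L$ with respect to ${\preceq}$'' splits into an ordering-free part, $L=\sum_{f\in F}Af$, and an ordering-dependent part, $\LM_{\preceq}(L)=\LM_{\preceq}(F)$. Only the second needs to be propagated to nearby orderings.

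Fix ${\preceq}\in\fraktur{U}_L(F)$, so that $L=\sum_{f\in F}Af$ and $\LM_{\preceq}(L)=\LM_{\preceq}(F)$. I would produce an open neighbourhood $\fraktur{W}$ of ${\preceq}$ in $\fraktur{T}$ on which both $\LM_{\preceq'}(F)=\LM_{\preceq}(F)$ and $\LM_{\preceq'}(L)=\LM_{\preceq}(L)$ hold, which is enough. First, fix a filtration $(S_i)_{i\in\NN_0}$ of $N$ by finite sets and choose $r\in\NN_0$ with $\Supp(F)\subseteq S_{r+1}$; since $F$ is finite, such an $r$ exists. For any ${\preceq'}\in\fraktur{N}_r({\preceq})$ the orderings ${\preceq}$ and ${\preceq'}$ coincide on $\Supp(F)$, hence $\phi({\preceq})$ and $\phi({\preceq'})$ coincide on $\slPhi(\Supp(F))=\Supp(\slPhi(F))$, and Remark \ref{LT=LT2} combined with Remark \ref{parapapunzi} gives $\LM_{\preceq'}(F)=\LM_{\preceq}(F)$.

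Next I would apply Lemma \ref{llll} to $E=\slPhi(L)\subseteq K[X]$ and ${\leq}=\phi({\preceq})\in\DO(M)$, which is legitimate because $\fraktur{T}\subseteq\DO(N)$ and $\DO(N)=\phi^{-1}(\DO(M))$. This yields an open neighbourhood $\fraktur{V}_0$ of $\phi({\preceq})$ in $\DO(M)$ such that $\LM_{\leq'}(\slPhi(L))=\LM_{\phi({\preceq})}(\slPhi(L))$ for all ${\leq'}\in\fraktur{V}_0$. Pulling back through the homeomorphism $\phi$ of Theorem \ref{homeo} and translating via the identification $\LM_{\preceq'}(L)=\LM_{\phi({\preceq'})}(\slPhi(L))$ from Remark \ref{parapapunzi}, I obtain an open neighbourhood $\fraktur{V}$ of ${\preceq}$ in $\DO(N)$ with $\LM_{\preceq'}(L)=\LM_{\preceq}(L)$ for every ${\preceq'}\in\fraktur{V}$.

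Finally, set $\fraktur{W}=\fraktur{N}_r({\preceq})\cap\fraktur{V}\cap\fraktur{T}$, which is open in $\fraktur{T}$ and contains ${\preceq}$. For every ${\preceq'}\in\fraktur{W}$ we then have
\[
\LM_{\preceq'}(L)=\LM_{\preceq}(L)=\LM_{\preceq}(F)=\LM_{\preceq'}(F),
\]
while the identity $L=\sum_{f\in F}Af$ is already given. Hence ${\preceq'}\in\fraktur{U}_L(F)$, so $\fraktur{U}_L(F)$ is a neighbourhood of each of its points and is therefore open in $\fraktur{T}$. There is no serious obstacle: the only substantive step is the appeal to Lemma \ref{llll}, whose Hilbert-function argument is precisely the feature of degree orderings that compensates for the absence of the multiplicativity assumption used in Lemma \ref{aperto1}.
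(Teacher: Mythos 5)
Your proof is correct and follows essentially the same route as the paper: the substantive step in both is the application of Lemma \ref{llll} to $\slPhi(L)$, transported through $\phi$ via Remark \ref{parapapunzi}, to make $\LM_{\preceq'}(L)$ locally constant. The only (harmless) difference is that you handle the finite set $F$ by the direct support-agreement argument of Remark \ref{LT=LT2}, whereas the paper simply applies Lemma \ref{llll} a second time to $E=\slPhi(F)$.
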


\begin{proof}
We may assume that $\mathfrak{U}_{L}(F)\ne\emptyset$. %, so that $\fraktur{T}\ne\emptyset$.
Let ${\preceq}\in\mathfrak{U}_{L}(F)$.
Thus $F$ is a Gr\"obner basis of $L$ with respect to~${\preceq}$, that is, it holds
${L=\sum_{f\in F}Af}$ and ${\LM_{\preceq}(F)=\LM_\preceq(L)}$.
Put ${\leq}=\phi({\preceq})$ and $E=\slPhi(F)$ and $J=\slPhi(L)$.
Of course, ${\leq}\in\DO(M)$.
Hence, by \ref{llll}, we can find
open neighbourhoods $\mathfrak{V}_E$ and $\mathfrak{V}_J$ of ${\leq}$ in $\DO(M)$
such that
${\LM_{\leq'}(E)=\LM_{\leq}(E)}$ for all ${{\leq'}\in\mathfrak{V}_E}$
and
$\LM_{\leq'}(J)=\LM_{\leq}(J)$ for all ${{\leq'}\in\mathfrak{V}_J}$.
By \ref{parapapunzi}
it follows
$\LM_{\leq'}(E)=\LM_{\leq}(E)=\LM_{\preceq}(F)=\LM_{\preceq}(L)=\LM_{\leq}(J)=\LM_{\leq'}(J)$
for all ${\leq'}\in\mathfrak{V}$,
where %we put 
$\mathfrak{V}=\mathfrak{V}_E\cap\mathfrak{V}_J$.
Put ${\mathfrak{W}=\phi^{-1}(\mathfrak{V})\cap\mathfrak{T}}$.
By \ref{homeo}, $\mathfrak{W}$ is an open subset of $\mathfrak{T}$ such that
${\preceq}\in\mathfrak{W}$.
Again by \ref{parapapunzi} we obtain
$\LM_{\preceq'}(F)=\LM_{\phi({\preceq'})}(E)=\LM_{\phi({\preceq'})}(J)=\LM_{\preceq'}(L)$
for all ${\preceq'}\in\mathfrak{W}$.
Thus $\mathfrak{W}\subseteq\mathfrak{U}_L(F)$.
Hence $\mathfrak{W}$ is an open neighbourhood of ${\preceq}$ in $\mathfrak{U}_L(F)$.
\end{proof}

\begin{rem}\label{ricoprimento2}
Assume that $A$ is left noetherian,
let $L$ be a left ideal of $A$,
and let $\fraktur{T}$ be a subset of $\DO(N)$.
Then, by \ref{GB exist 1}, for each ${\preceq}\in\fraktur{T}$
there exists a Gr\"obner basis $G_{\preceq}$ of $L$ with respect to~${\preceq}$.
Of course, in the notation of \ref{aperto2}, for each ${\preceq}\in\fraktur{T}$
it holds ${\preceq}\in\mathfrak{U}_{L}(G_\preceq)$,
and thus $\bigcup_{{\preceq}\in\fraktur{T}}\mathfrak{U}_{L}(G_\preceq)$
is an open covering of~$\fraktur{T}$.
\end{rem}

\begin{thm}
Assume that $A$ is left noetherian,
let $L$ be a left ideal of $A$,
and let $\mathfrak{T}$ be a closed subset of ${}\DO(N)$.
Then $L$ admits a $\mathfrak{T}$-universal Gr\"obner basis.
\end{thm}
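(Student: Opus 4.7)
The plan is to mimic the proof of Theorem \ref{UGB exist 1}, with \ref{aperto1} replaced by \ref{aperto2} and with the well-orderings/multiplicativity hypothesis replaced by the left noetherianity assumption used in \ref{ricoprimento2}. All the necessary ingredients are already stated in the excerpt, so the argument is essentially an assembly.

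First I would observe that, since $A$ is left noetherian, by Theorem \ref{GB exist 1} every ${\preceq}\in\fraktur{T}$ admits a Gr\"obner basis $G_\preceq$ of $L$ with respect to $\preceq$. By Lemma \ref{aperto2}, the set $\mathfrak{U}_L(G_\preceq)$ is open in $\fraktur{T}$, and trivially ${\preceq}\in\mathfrak{U}_L(G_\preceq)$; this is exactly the content of Remark \ref{ricoprimento2}. Hence
\begin{equation*}
\fraktur{T}=\bigcup_{{\preceq}\in\fraktur{T}}\mathfrak{U}_L(G_\preceq)
\end{equation*}
is an open covering of $\fraktur{T}$.

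Next I would invoke compactness. By Lemma \ref{DO(M) compact} the space $\DO(M)$ is compact, and by \ref{WO M N} (which identifies $\DO(N)$ with $\phi^{-1}(\DO(M))$ via the homeomorphism $\phi$ of Theorem \ref{homeo}) the space $\DO(N)$ is compact as well. A closed subset of a compact space is compact, so $\fraktur{T}$ is compact, and I can extract a finite subcovering: there exist $s\in\NN$ and ${\preceq_1},\dots,{\preceq_s}\in\fraktur{T}$ with
\begin{equation*}
\fraktur{T}=\bigcup_{1\le j\le s}\mathfrak{U}_L(G_{\preceq_j}).
\end{equation*}

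Finally I set $U=\bigcup_{1\le j\le s}G_{\preceq_j}$ and verify the universal Gr\"obner property exactly as in \ref{UGB exist 1}: for any ${\preceq_0}\in\fraktur{T}$ there is some $j$ with ${\preceq_0}\in\mathfrak{U}_L(G_{\preceq_j})$, so $G_{\preceq_j}$ is a Gr\"obner basis of $L$ with respect to $\preceq_0$, and since $G_{\preceq_j}\subseteq U\subseteq L$, also $U$ is a Gr\"obner basis of $L$ with respect to $\preceq_0$. Hence $U$ is a $\fraktur{T}$-universal Gr\"obner basis of $L$.

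The proof has no genuine obstacle, since the hard work is already packaged in Lemma \ref{aperto2} (which in turn rests on the degree-ordering Hilbert-function argument \ref{llll}) and in the compactness of $\DO(N)$. The only mildly non-trivial point is recognising that the multiplicativity assumption of \ref{UGB exist 1} is not needed here, because \ref{aperto2} already provides openness of $\mathfrak{U}_L(F)$ in any subspace of $\DO(N)$ without such an assumption, and the mere existence of Gr\"obner bases with respect to each ${\preceq}\in\fraktur{T}$ is granted by left noetherianity through \ref{GB exist 1}.
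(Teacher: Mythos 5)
Your proposal is correct and follows exactly the paper's own argument: existence of a Gr\"obner basis for each ordering via \ref{GB exist 1}, openness of the sets $\mathfrak{U}_L(G_\preceq)$ via \ref{aperto2}, compactness of the closed subset $\fraktur{T}$ of $\DO(N)$, and the union of the finitely many selected Gr\"obner bases. No discrepancies to report.
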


\begin{proof}
In the notation of \ref{ricoprimento2},
$\bigcup_{{\preceq}\in\fraktur{T}}\fraktur{U}_{L}(G_\preceq)$ is an open
covering of~$\mathfrak{T}$, where each $G_\preceq$ is a Gr\"obner basis of $L$ with respect
to~${\preceq}$.
As $\DO(N)$ is compact and $\fraktur{T}$ is closed in $\DO(N)$, $\fraktur{T}$ is compact.
Hence we can find $s\in\NN$ and ${\preceq_1},\ldots,{\preceq_s}\in\fraktur{T}$ such that
$\bigcup_{1\le j\le s}\fraktur{U}_{L}(G_{\preceq_j})$
is a finite open covering of~$\fraktur{T}$.
We claim that $U=\bigcup_{1\le j\le s} G_{\preceq_j}$ is a $\fraktur{T}$-universal Gr\"obner
basis of $L$.
Indeed, let ${\preceq_0}\in\fraktur{T}$.
Then there exists $j\in\{1,\ldots,s\}$ such that
${\preceq_0}\in\fraktur{U}_{L}(G_{\preceq_j})$.
Thus $G_{\preceq_j}$ is a Gr\"obner basis of $L$ with respect to~${\preceq_0}$.
Hence, clearly, also $U$ is a Gr\"obner basis of $L$ with respect to~${\preceq_0}$.
As the choice of ${\preceq_0}$ in $\fraktur{T}$ was arbitrary, we conclude that
$U$ is a $\fraktur{T}$-universal Gr\"obner basis of $L$.
\end{proof}

\noindent
We have obtained another proof of the result of \ref{ao dco ugb} about
degree-compatible algebras, this time without appealing to the Macaulay Basis Theorem.

\begin{cor}
Left ideals of a degree-compatible algebra always admit a universal Gr\"obner basis.
\hfill\qedsymbol 
\end{cor}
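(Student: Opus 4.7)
The plan is to recognize this corollary as an immediate combination of the preceding theorem with the structural facts already established about $\DCO(N)$. By Definition \ref{dd}, a degree-compatible algebra $A_K^{t,\slPhi}$ is $\DCO(N)$-admissible, i.e., multiplicative on $\DCO(N)$. The natural candidate for the parameter set is therefore $\mathfrak{T}=\DCO(N)$, in accord with the convention (stated just before Theorem \ref{GB exist 1}) that ``universal Gr\"obner basis'' in a $\mathfrak{T}$-admissible algebra means $\mathfrak{T}$-universal Gr\"obner basis.

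First I would verify the two hypotheses of the preceding theorem. For left noetherianity of $A$: by Example \ref{DCO exa} we have $\DCO(N)\ne\emptyset$, and via the chain $\DCO(N)\subseteq\AO(N)=\WO(N)\cap\CO(N)\subseteq\WO(N)$ (from Remark \ref{DCO compact} and Proposition \ref{cox}), any ${\preceq}\in\DCO(N)$ is a well-ordering on $N$ on which $A$ is multiplicative; Corollary \ref{cucu} then yields that $A$ is left noetherian. For the topological hypothesis: Remark \ref{DCO compact} states that $\DCO(N)$ is closed in $\TO(N)$, and since $\DCO(N)\subseteq\DO(N)$ with $\DO(N)$ carrying its subspace topology from $\TO(N)$, it follows that $\DCO(N)$ is closed in $\DO(N)$ as well.

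Given these two facts, the preceding theorem applied with $\mathfrak{T}=\DCO(N)$ directly yields that every left ideal $L$ of $A$ admits a $\DCO(N)$-universal Gr\"obner basis, which is exactly what is meant by a universal Gr\"obner basis in the degree-compatible setting.

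There is really no obstacle here beyond bookkeeping: the corollary is a direct specialisation of the preceding theorem. The only subtlety worth flagging is that left noetherianity of $A$ is not assumed as part of ``degree-compatible'', so one must extract it from the multiplicativity on some element of $\WO(N)$ via Corollary \ref{cucu}; this is immediate once one checks $\DCO(N)\subseteq\WO(N)$ and that $\DCO(N)$ is non-empty.
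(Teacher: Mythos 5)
Your proposal is correct and follows exactly the route the paper intends: the corollary is stated with no proof body (just a \qedsymbol) because it is the specialisation $\mathfrak{T}=\DCO(N)$ of the immediately preceding theorem, and you correctly supply the two checks that make this legitimate — that $\DCO(N)$ is nonempty and closed, and that left noetherianity follows from multiplicativity on some element of $\DCO(N)\subseteq\WO(N)$ via Corollary \ref{cucu}. Nothing to add.
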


%%%%%%%%%%%%%%%%%%%%%%%%%%%%%%%%%%%%%%%%%%%%%%%%%%%%%%%%%%%%%%%%%%%%%%%%

%\backmatter
% \vskip 1.5\baselineskip
% \renewcommand{\bibname}{REFERENCES}
% \renewcommand{\refname}{REFERENCES}

\end{document}